\title{The Spectrum of Heavy Tailed Random Matrices}
\author{
G\'erard Ben Arous\thanks{Swiss Federal Institute of Technology
(EPFL), CH-1015 Lausanne, Switzerland and Courant Institute of
Mathematical Sciences, New York University, 251 Mercer Street, New
York NY 10012, E-mail: gba1@nyu.edu}, Alice Guionnet\thanks{Ecole
Normale Sup\'erieure de Lyon, Unit\'e de Math\'ematiques pures et
appliqu\'ees, UMR 5669,46 All\'ee d'Italie, 69364 Lyon Cedex 07,
France. E-mail: aguionne@umpa.ens-lyon.fr 
}}
\newtheorem{prop}{Proposition}[section]
\newtheorem{theo}[prop]{Theorem}
\newtheorem{defi}[prop]{Definition}
\newtheorem{lem}[prop]{Lemma}
\newtheorem{rmk}[prop]{Remark}
\def\P{\mathbb P}
\def\E{\mathbb E}
\def\E{\mathbb E}
\def\C{\mathbb C}
\def\e{\epsilon}
\def\N{{\mathbb N}}
\def\tr{{\mbox{tr}}}
\def\tr{{\mbox{tr}}}
\def\xx{$\square$}
\def\a{\alpha}
\def\d{\delta}
\def\e{\epsilon}
\def\l{\lambda}
\def\ra{\rightarrow}
\def\Ca{{\cal C}}
\def\Fa{{\cal F}}
\def\La{{\cal L}}
\def\mun{{\hat\mu}}
\def\lbk{\lbrack}
\def\rbk{\rbrack}
\def\part{\partial}
\def\ot{\otimes}
\def\ts{\times}
\def\Pa{{\mathcal{P}}}
\long\def\symbolfootnote[#1]#2{\begingroup
\def\thefootnote{\fnsymbol{footnote}}\footnote[#1]{#2}\endgroup}
\def\R{{\mathbb R}}
\begin{document}

\renewcommand{\refname}{\Large{References}}

\maketitle

\symbolfootnote[0]{{\it MSC 2000 subject classifications.}  primary 
 15A52,  60E07.} 
 
\symbolfootnote[0]{{\it Key words.} Random matrices, stable distributions}

\symbolfootnote[0]{This work was partially supported by Miller institute for Basic
 Research in Science, University of California Berkeley.}

\begin{abstract}
Let $X_N$ be an $N\ts N$ random symmetric matrix
with independent equidistributed entries
modulo the symmetry constraint.
If the law $P$ of the entries
has a finite second moment,
it was shown by Wigner \cite{wigner}
that the empirical distribution of the eigenvalues
of $X_N$, once renormalized by $\sqrt{N}$,
converges almost surely and in expectation  to
the so-called semicircular distribution as $N$ goes to infinity.
In this paper we study the same question  when
$P$ is in the domain of attraction
of an $\alpha$-stable law. We prove that
if we renormalize the eigenvalues by a constant  $a_N$
of order $N^{\frac{1}{\alpha}}$, the corresponding
spectral distribution converges in expectation towards
a law $\mu_\alpha$ which only depends
on $\alpha$. We characterize $\mu_\alpha$
and study some of its properties; it is a heavy-tailed
probability measure which is absolutely continuous
with respect to Lebesgue measure except possibly on a compact  set
of capacity zero.

\end{abstract}
\section{Introduction}
We study the asymptotic behavior of the spectral measure of large
random real symmetric matrices with independent identically
distributed heavy tailed entries. Let $(x_{ij}, 1\le i\le j< \infty)
$ be an infinite array of i.i.d real variables with common
distribution $P$ living in a probability
space $(\Omega,\P)$.
 Denote by $X_N$ the $N\ts N$ symmetric matrix given
by:
$$ X_N(i,j)=x_{ij}\,  \mbox{ if }\, i\le j,\, x_{ji}\, \mbox{ otherwise.}$$

If the entries have a finite second moment
$\sigma^2=\E[x_{ij}^2]=\int x^2 dP(x)$, and if
$(\lambda_1,\cdots,\lambda_N)$ are the eigenvalues of
$\frac{X_N}{\sqrt{N}}$ then Wigner's theorem (see \cite{wigner} and
generalizations in \cite{pastur,bai}) asserts that the empirical
spectral measure $\frac{1}{N}\sum_{i=1}^N \delta_{\lambda_i}$ of the
matrix $\frac{X_N}{\sqrt{N}}$ converges weakly almost surely to the
semi-circle distribution
$$\sigma(dx)=\frac{1}{2\pi \sigma^2}\sqrt{4\sigma^2 -x^2} dx.$$

We will consider here the case of heavy tailed entries, when the
second moment $\sigma^2$ is infinite. We will assume that the common
distribution of the absolute values of the $x_{ij}$'s is in the
domain of attraction of an $\alpha$-stable law, for $\alpha\in
]0,2[$, i.e that there exists a slowly varying function L such that
\begin{equation}
\P(|x_{ij}|\geq u)= \frac{L(u)}{u^{\alpha}}. \label{stabledomain}
\end{equation}
We introduce the normalizing constant $a_N$ by:
\begin{equation}
a_N= \inf(u, \P[ |x_{ij}| \geq u]\le \frac{1}{N}).
\label{normalisation}
\end{equation}
It is clear  that $a_N$ is roughly of order
$N^{\frac{1}{\alpha}}$, indeed there exists another slowly varying
function $L_0$ such that
\begin{equation}
\label{normalisation2}
a_N= {L_0(N)}{N^{\frac{1}{\alpha}}}.
\end{equation}
We then consider the matrix $A_N:= a_N^{-1} X_N$, its eigenvalues
$(\lambda_1,\cdots,\lambda_N)$, and its spectral measure
$\mun_{A_N}:=\frac{1}{N}\sum_{i=1}^N \delta_{\lambda_i}$. Our main
result is

\begin{theo}\label{main}
Let $\alpha\in ]0,2[$ and
assume \eqref{stabledomain}.
\begin{enumerate}
\item There exists a probability measure $\mu_{\alpha}$ on $\R$
such that the mean spectral measure $\E[\mun_{A_N}]$ converges
weakly to $\mu_{\alpha}$.
\item  $\mun_{A_N}$ converges weakly in
probability to $\mu_{\alpha}$. More precisely, for any bounded
continuous function $f$, $\int f(x)d\mun_{A_N}(x)$ converges in
probability to $\int f(x) d\mu_{\alpha}(x)$.
\item Let $(N_k)_{k \ge 1}$ be an increasing sequence of integers such that
$\sum_{k=1}^{\infty} N_k^{-\varepsilon}<\infty$ for some
$\varepsilon<1$, then the subsequence $\hat \mu_{A_{N_k}}$ converges
almost surely weakly to $\mu_{\alpha}$.
\end{enumerate}
\end{theo}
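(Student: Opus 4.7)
The plan is to prove part~(1) by a Stieltjes-transform/resolvent analysis leading to a self-consistent distributional equation, and to deduce parts~(2) and~(3) from a variance bound that relies only on the symmetry of $X_N$ and the rank of a row-exchange perturbation, hence on no moment assumption.

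For $z$ in the open upper half-plane, put $G_N(z):=\trn(A_N-z)^{-1}$. Since no moment of order $\ge\alpha$ is finite, the method of moments is unavailable and I would work directly with $\E[G_N(z)]$. The starting point is the Schur complement formula
$$[(A_N-z)^{-1}]_{ii}\;=\;\frac{1}{-z-a_N^{-1}x_{ii}-a_N^{-2}\sum_{j,k\ne i}x_{ij}x_{ik}[R^{(i)}_N(z)]_{jk}}\, ,$$
where $R^{(i)}_N(z)$ is the resolvent of the matrix obtained by removing the $i$-th row and column. Conditionally on $R^{(i)}_N(z)$, the denominator splits into a diagonal piece $\sum_{j\ne i}(x_{ij}/a_N)^2[R^{(i)}_N(z)]_{jj}$ and a zero-mean off-diagonal piece which is of lower order (after a truncation of the squared entries and a Cauchy--Schwarz estimate). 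Since $x_{ij}^2$ lies in the domain of attraction of an $\alpha/2$-stable law with normalising constant $a_N^2$, the generalised central limit theorem suggests that, conditionally on $\{[R^{(i)}_N(z)]_{jj}\}_j$, the diagonal piece converges in distribution to an $\alpha/2$-stable variable whose L\'evy measure is a functional of the empirical distribution $\onN\sum_j\delta_{[R^{(i)}_N(z)]_{jj}}$. This heuristic leads to a self-consistent equation $\nu_z=\Psi_{z,\alpha}(\nu_z)$ for the limit law $\nu_z$ of $[R_N(z)]_{ii}$, where $\Psi_{z,\alpha}$ is an explicit operator on probability measures supported in $\{w\in\C:|w|\le(\Im z)^{-1},\,\Im w\ge0\}$. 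Existence and uniqueness of the fixed point (via a monotonicity or contraction argument in a suitable metric) produce $G_\alpha(z):=\int w\, d\nu_z(w)$; the asymptotics $iy\,G_\alpha(iy)\to-1$ as $y\to\infty$, read off the equation, then guarantee that $G_\alpha$ is the Stieltjes transform of a probability measure $\mu_\alpha$ on $\R$.

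For~(2) and~(3), the main observation is that substituting one row and the matching column of $X_N$ by an independent copy is a rank-$2$ perturbation of $X_N$. By the eigenvalue interlacing inequality, $\|F_{A_N}-F_{A_N^{(k)}}\|_\infty\le 2/N$ for the empirical cumulative distribution functions, so that for any $f$ of bounded total variation,
$$\left|\int f\, d\mun_{A_N}-\int f\, d\mun_{A_N^{(k)}}\right|\;\le\;\frac{2\|f\|_{\mathrm{BV}}}{N}\, ,$$
\emph{regardless of the tails of the entries}. Writing $\int f\, d\mun_{A_N}-\E\int f\, d\mun_{A_N}$ as a sum of $N$ martingale increments indexed by successive rows of $X_N$, the above makes each increment of size $O(1/N)$, whence
$$\mathrm{Var}\!\left(\int f\, d\mun_{A_N}\right)\;\le\;\frac{C\|f\|_{\mathrm{BV}}^2}{N}\, .$$
For a general bounded continuous $f$, approximation by compactly-supported Lipschitz functions (using the tightness coming from~(1)) transfers the bound. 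Chebyshev's inequality now yields~(2). Under the hypothesis $\sum_k N_k^{-\varepsilon}<\infty$ for some $\varepsilon<1$, one has $N_k^{-1}\le N_k^{-\varepsilon}$ for $k$ large, so $\sum_k N_k^{-1}<\infty$ and the Borel--Cantelli lemma delivers the almost-sure convergence claimed in~(3).

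The main obstacle is the analysis in~(1): rigorously passing to the stable limit for the conditional quadratic form --- one must control uniformly in $i$ the empirical laws of the diagonal resolvent entries $[R^{(i)}_N(z)]_{jj}$ and tame the coupling between the removal of row $i$ and the full entry $[R_N(z)]_{ii}$ --- and then solving the fixed-point equation for $\Psi_{z,\alpha}$ on laws supported in the upper half-disk, in particular ruling out degenerate solutions. By contrast the concentration argument underlying~(2)--(3) is very robust: the rank-$2$ interlacing bound is tail-free, so the variance estimate holds with no moment assumption on $x_{ij}$.
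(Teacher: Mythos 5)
Your part (1) follows the paper's skeleton (Schur complement, conditional $\alpha/2$-stable limit for $\sum_{j}(x_{ij}/a_N)^2[R^{(i)}_N(z)]_{jj}$, self-consistent equation for the law of the diagonal resolvent entries), but the step you dispatch in one clause --- ``existence and uniqueness of the fixed point (via a monotonicity or contraction argument in a suitable metric)'' --- is precisely the step that is not available: no contraction or monotonicity estimate for your operator $\Psi_{z,\alpha}$ on probability measures in the half-disk is given, and the paper states explicitly (remark after Theorem \ref{theo-limitpoint-uniq}) that it \emph{cannot} prove uniqueness of solutions to the fixed-point equation of Theorem \ref{theo-limitpoint}. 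Since your definition of $G_\alpha(z)$, and hence of $\mu_\alpha$, rests entirely on this uniqueness, part (1) as written is not a proof. The paper's workaround is genuinely different and you would need it (or a real substitute): using the explicit Fourier--Laplace transform of the complex stable law $P^{\mu^z}$ (Theorem \ref{FL transform of complex stable laws}), every limit point yields a solution of the closed \emph{scalar} equation \eqref{cocottes} for $X_{\mu^z}=\int x^{\alpha/2}d\mu^z(x)$; uniqueness of analytic solutions with the decay $O(|\Im z|^{-\alpha/2})$ is then obtained from the local implicit function theorem near $z=\infty$ together with analyticity (Theorem \ref{uniquenessofX}), and this suffices because it determines $\int x\,d\mu^z(x)$, i.e.\ the Stieltjes transform, without identifying $\mu^z$ itself. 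Two further ingredients you gloss over but need: tightness of $\E[\mun_{A_N}]$ (the entries have no finite moments, so this is not free; the paper gets it through truncation at level $N^\kappa a_N$ and second-moment estimates, Sections \ref{cutoff}--\ref{sectiontight}), and the removal of the truncation afterwards, done there via the rank/Lidskii bound \eqref{Lidskii}.

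Your treatment of (2)--(3) is correct and takes a different, more robust route than the paper's: the paper concentrates $L_N^{z,\kappa}(f)$ by an entrywise martingale bound on Lipschitz functions of the truncated resolvent (Lemma \ref{concentration}), which only gives variance of order $N^{-\e}$ with $\e=1-\kappa(2-\alpha)<1$ --- this is exactly why the hypothesis $\sum_k N_k^{-\varepsilon}<\infty$ for some $\varepsilon<1$ appears in part (3). Your row-wise martingale combined with the rank-two/interlacing bound for bounded-variation test functions is tail-free, needs no truncation, gives variance $O(1/N)$, and hence yields (2) by Chebyshev and (3) by Borel--Cantelli even under the weaker condition $\sum_k N_k^{-1}<\infty$, modulo the routine reduction to a countable family of test functions and the tightness control needed to pass from compactly supported Lipschitz $f$ to general bounded continuous $f$. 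So the defect of the proposal is concentrated in part (1): without the uniqueness argument for $X_z$ (or an honest proof of uniqueness of the measure-valued fixed point), the limit $\mu_\alpha$ is not identified and parts (2)--(3) have nothing to converge to.
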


\begin{rmk}
\label{Remark1} We note that the hypothesis \eqref{stabledomain}
concerns only the tail behavior of the distribution of the absolute
values of the entries. We make no assumption about the skewness of
the distribution of the entries, i.e about their right or left
tails.
\end{rmk}
\begin{rmk}
It  would be useful to control better the fluctuations in Theorem
\ref{main} and establish almost sure convergence for the whole
sequence $\mun_{A_N}$.
\end{rmk}
Our approach is classical. It consists in proving the convergence of
the resolvent, i.e of the mean of the Stieltjes transform of the
spectral measure, by proving tightness and characterizing uniquely
the possible limit points. We first prove, in section \ref{cutoff},
that it is possible , for all later purposes, to truncate the large
values of the entries at appropriate levels. We then proceed, in
section \ref{sectiontight}, to show tightness for the spectral
measures of the truncated and original matrices $A_N$ . We then
introduce, in section \ref{induction}, the following important
quantity: for $z\in\C\backslash\R$, we define the probability
measure $L_N^z$ on $\C$ by
$$L_N^z=\frac{1}{N}\sum_{k=1}^N \delta_{(z-A_N)^{-1}_{kk}}$$
i.e the empirical measure of the diagonal elements of the resolvent
of $A_N$ at $z\in\C\backslash \R$. The classical Schur complement
formula is our basic linear algebraic tool to study $L_N^z$
recursively on the dimension, as is usual when the resolvent method
is used (see e.g \cite{pastur} or \cite{bai}). In section
\ref{limitingeq}, using an argument of concentration of measure and
borrowing classical techniques from the theory of triangular arrays
of i.i.d random variables, we show that the limit points $\mu^z$ of
$L_N^z$ satisfy a fixed point equation in the space of probability
measures on $\C$. Even though we cannot prove uniqueness of the
solution to this equation, we manage in section \ref{secuniq} to
prove the uniqueness of the solution to the resulting equation for
$\int x^{\frac{\alpha}{2}}d\mu^z(x)$, which in turn gives the
uniqueness of $\int x d\mu^z(x)$. This is enough to characterize
uniquely the limit points of $\E[\mun_{A_N}]$  and thus the
convergence of $\E[\mun_{A_N}]$ to $\mu_{\alpha}$.

Once the question of convergence is settled by Theorem \ref{main},
the next question is to describe the limiting measure
$\mu_{\alpha}$. We will discuss in this article three different
characterizations of $\mu_{\alpha}$. Our approach leads directly to
the following first characterization of $\mu_{\alpha}$ through its
Stieltjes transform, defined for $z\in \C\backslash\R$ by:

\begin{equation}
\label{Stieltjes} G_{\alpha}(z)=\int (z-x)^{-1} d\mu_{\alpha}(x).
\end{equation}

Define the entire function g on $\C$ by
\begin{equation}
\label{g} g_{\alpha}(y)= \frac{2}{\alpha} \int_0^\infty
e^{-v^{\frac{2}{\alpha}}} e^{-vy}dv
\end{equation}

We will also need the constants
$C(\alpha)=\frac{e^{i\frac{\pi\alpha}{2}}}{
\Gamma(\frac{\alpha}{2})}$ and
$c(\alpha)=\cos(\frac{\pi\alpha}{4})$.

\begin{theo}\label{main2}
\begin{enumerate}
\item  There exists a unique function $Y_z$, analytic on the half 
plane $\C^+=\{ z \in \C,
Imz>0\}$, tending to zero at infinity, and such that
$$ C(\alpha) g(c(\alpha)Y_z)= Y_z (-z)^{\alpha}$$
\item The probability measure $\mu_{\alpha}$ of Theorem \ref{main} is
uniquely described by its Stieltjes transform given, for $z\in
\C^+$, by
\begin{equation}
\label{eqGint} G_{\alpha}(z)=-\frac{1}{z}\int_0^\infty e^{-t}
e^{-c(\alpha) t^{\frac{\alpha}{2}} Y_z} dt
\end{equation}

\end{enumerate}

\end{theo}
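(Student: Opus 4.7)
The two parts of the theorem are tightly coupled: part (2) will construct an analytic function $Y_z$ on $\C^+$ by setting $Y_z = \int x^{\alpha/2}\,d\mu^z(x)$, where $\mu^z$ is the weak limit of the empirical measure $L_N^z$ of the diagonal resolvent entries obtained in sections \ref{limitingeq}--\ref{secuniq}; part (1) then pins this function down uniquely.

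For part (1), my plan is to view the equation $C(\alpha) g(c(\alpha) Y) = (-z)^{\alpha} Y$ as a fixed-point problem $Y = \Phi_z(Y) := (-z)^{-\alpha} C(\alpha)\, g(c(\alpha) Y)$. The function $g$ is entire and locally Lipschitz, while $(-z)^{-\alpha} \to 0$ as $|z| \to \infty$ in $\C^+$, so for $|z|$ large enough $\Phi_z$ maps a small disc around $0$ strictly into itself and contracts it. Banach's fixed-point theorem then yields a unique solution $Y_z$ with $|Y_z| = O(|z|^{-\alpha})$, and uniform convergence of the Picard iterates provides analyticity in $z$. Any other solution $\tilde Y_z$ tending to $0$ at infinity agrees with $Y_z$ on $\{|z| > R_0\}$ by the local uniqueness, hence on all of $\C^+$ by the identity principle, as $\C^+$ is connected. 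Existence of such a global solution is supplied by part (2).

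For part (2), the starting point is the Schur complement formula,
\[
[(z-A_N)^{-1}]_{kk} = \frac{1}{z - x_{kk}/a_N - Q_N^{(k)}}, \qquad Q_N^{(k)} = \frac{1}{a_N^{2}}\sum_{i,j \neq k} R^{(k)}_{ij}\, x_{ki}\, x_{kj},
\]
where $R^{(k)} = (z - A_N^{(k)})^{-1}$ is independent of the $k$-th row. Heavy tails force the diagonal contribution to dominate: the variables $(x_{ki}/a_N)^{2}$ lie in the domain of attraction of a one-sided $\alpha/2$-stable law, so, conditionally on $A_N^{(k)}$, standard triangular-array theory yields that $Q_N^{(k)}$ converges in law to a random variable $V$ whose conditional Laplace transform has the form $\exp\bigl(-c(\alpha)\, t^{\alpha/2}\, N^{-1}\sum_i (R^{(k)}_{ii})^{\alpha/2}\bigr)$, and the empirical average converges to $Y_z$ by Theorem \ref{main}. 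A Laplace-type integral representation of $1/(z - V)$, valid on $\C^+$ after the appropriate choice of complex branch, now lets one compute $G_\alpha(z) = \E[1/(z - V)]$ explicitly; the resulting formula is exactly the one in the statement, and plugging it back into the defining identity $Y_z = \int x^{\alpha/2}\,d\mu^z(x)$ reproduces the equation $C(\alpha) g(c(\alpha) Y_z) = (-z)^{\alpha} Y_z$ of part (1), closing the loop.

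The main obstacle is the conditional stable convergence of the quadratic form $Q_N^{(k)}$ and the precise identification of the constants $c(\alpha) = \cos(\pi\alpha/4)$ and $C(\alpha) = e^{i\pi\alpha/2}/\Gamma(\alpha/2)$. Because the weights $R^{(k)}_{ij}$ are complex, a principal branch for $(\cdot)^{\alpha/2}$ must be fixed, and the extra phase arising from this choice is the source of both $c(\alpha)$ and $C(\alpha)$; one must moreover show that the off-diagonal cross terms $R^{(k)}_{ij} x_{ki} x_{kj}$ for $i \neq j$ do not contribute in the limit, despite the heavy tails of the entries. This is a L\'evy--Khintchine computation combined with a concentration estimate on the spectral side (essentially the content of sections \ref{sectiontight}--\ref{secuniq}). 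Once this convergence is secured, the remainder of the theorem is algebraic rearrangement and the contraction argument of part (1).
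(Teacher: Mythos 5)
Your overall route matches the paper's: Schur complement, negligibility of off-diagonal cross terms, conditional convergence of the quadratic form to a complex stable law, a Laplace/contour integral representation to close the equation, and uniqueness near infinity (your Banach contraction is essentially the paper's local implicit function theorem applied to $F(u,y)=ug_\alpha(y)-y$ at $(0,0)$) followed by the identity principle. But there is one genuine gap at the decisive step. You assert that the conditional scale parameter $N^{-1}\sum_i (R^{(k)}_{ii})^{\alpha/2}$ ``converges to $Y_z$ by Theorem \ref{main}.'' This is both circular and unavailable. Circular, because Theorem \ref{main} is proved by exactly this resolvent analysis; its convergence statement cannot be used as an input here. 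Unavailable, because $\frac1N\sum_k \bigl((z-A_N)^{-1}_{kk}\bigr)^{\alpha/2}$ is not a spectral statistic: writing $G_{kk}(z)=\sum_i |u_{ki}|^2(z-\lambda_i)^{-1}$, the nonlinearity of $x\mapsto x^{\alpha/2}$ makes this quantity depend on the eigenvector overlaps $|u_{ki}|^2$, so weak convergence of $\mun_{A_N}$ (even if granted) says nothing about it. The correct logic, which is the heart of the paper, runs in the opposite direction: one only has tightness of $\E[L_N^{z,\kappa}]$; a concentration estimate (the martingale bound of Lemma \ref{concentration}) lets one replace the random conditional parameter by a deterministic one along subsequences; every limit point $\mu^z$ then satisfies the closed scalar equation for $X_{\mu^z}=\int x^{\alpha/2}d\mu^z$; and it is the uniqueness statement of part (1) that upgrades subsequential limits to convergence of $\E\bigl[\frac1N\tr(z-A_N^\kappa)^{-1}\bigr]$. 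Note in particular that one does not obtain (and the paper cannot prove) convergence of the full measure $L_N^z$ or uniqueness of $\mu^z$; only the $\tfrac\alpha2$-moment and hence the first moment, i.e. the Stieltjes transform, are identified. Your plan as written presupposes the convergence it is supposed to establish.

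Two smaller points. First, $\int x^{\alpha/2}d\mu^z(x)$ is $X_z$, not $Y_z$; the function in the theorem is $Y_z=(-1/z)^{\alpha/2}X_z$, and passing from the equation for $X_z$ to the one for $Y_z$ requires the contour-rotation identity (the analogue of Lemma \ref{repre2}), which deserves a proof since the branch of the power function is exactly where $C(\alpha)$ and $c(\alpha)$ come from. Second, in your uniqueness argument you need the solution constructed in part (2) to belong to the uniqueness class: a priori it only satisfies $|X_z|\le |\Im z|^{-\alpha/2}$, so smallness is first available for $\Im z$ large, not for all $|z|>R_0$; one should run the contraction/implicit-function step on $\{\Im z>L\}$, identify the solution there, and only a posteriori deduce the decay $|Y_z|=O(|z|^{-\alpha})$ near infinity, as the paper does.
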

\begin{rmk} Note that $\mu_\alpha$
depends continuously on $\alpha\in(0,2)$ since $Y_z$
as described above is continuous in $\alpha$, at list for sufficiently large
$z$, a remark which insures the continuity of $G_\alpha(z)$
at list for sufficiently large $z$ and therefore the continuity of $\mu_\alpha$.

\end{rmk}

Using the characterization given in Theorem \ref{main2}, we prove in
section \ref{studylimit} the following properties of $\mu_{\alpha}$.
\begin{theo}\label{main3} The probability
measure $\mu_{\alpha}$ of Theorem \ref{main} satisfies
\begin{enumerate}
\item $\mu_{\alpha}$ is symmetric.
\item $\mu_{\alpha}$ has unbounded support.
\item There exists a (possibly empty) compact subset of the real line 
$K_{\alpha}$  of capacity zero, such that the measure $\mu_{\alpha}$
has a  smooth density $\rho_{\alpha}$ on the open complement
$U_{\alpha}= \R \backslash K_{\alpha}$ .
\item  $\mu_{\alpha}$ has heavy tails. There exists
a constant $L_{\alpha}>0$ such that, when $|x|\ra\infty$
$$\rho_{\alpha}(x)\sim \frac{L_{\alpha}}{|x|^{\alpha+1}}$$
\end{enumerate}
\end{theo}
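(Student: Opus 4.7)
I will derive each of the four assertions from the explicit characterization of $\mu_\alpha$ in Theorem \ref{main2}, using Remark \ref{Remark1} to simplify when convenient.

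For (1) and (2): since Remark \ref{Remark1} shows $\mu_\alpha$ depends only on the tail of $|x_{ij}|$, I may replace $P$ by the symmetrization $\widetilde{P}(A) = (P(A) + P(-A))/2$, which still satisfies \eqref{stabledomain} with the same $\alpha$; for such symmetric entries $X_N$ and $-X_N$ have the same distribution, so the spectrum of $A_N$ is symmetric in distribution, and this passes to the limit $\mu_\alpha$. For unboundedness of support I would either note it is a direct consequence of (4), or give an independent lower bound: the number of entries with $|x_{ij}| > t\, a_N$ is asymptotically of order $N/t^\alpha$, and each such large entry produces, via the associated $2\times 2$ minor, an eigenvalue of $A_N$ of magnitude at least $t$, so $\mu_\alpha(\R \setminus [-t,t]) \ge c_\alpha t^{-\alpha}$ for every $t$.

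For (3), set $F(Y,z) := Y(-z)^\alpha - C(\alpha)\, g(c(\alpha) Y)$; this is entire in $Y$ and analytic in $z$ on $\C \setminus (-\infty, 0]$, and by Theorem \ref{main2}, $Y_z$ solves $F(Y_z,z) = 0$ in $\C^+$. By the analytic implicit function theorem, $Y_z$ extends analytically across any real point $x_0$ at which $\partial_Y F(Y_{x_0 + i0}, x_0) \neq 0$. The exceptional set $K_\alpha$ is then contained in the real trace of $\{F = 0\} \cap \{\partial_Y F = 0\}$ in $\C^2$, i.e.\ in the real projection of an analytic variety of positive codimension, and hence is of logarithmic capacity zero. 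On $U_\alpha = \R \setminus K_\alpha$, formula \eqref{eqGint} then yields an analytic extension of $G_\alpha$, and Stieltjes--Perron inversion gives the density $\rho_\alpha(x) = -\pi^{-1}\,\mathrm{Im}\, G_\alpha(x + i0)$, which is real-analytic, hence smooth.

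For (4), I would expand the fixed-point equation at infinity. The elementary computation $g(0) = \Gamma(\alpha/2)$ combined with $C(\alpha) g(c(\alpha)Y_z) = Y_z(-z)^\alpha$ forces $Y_z \sim e^{i\pi\alpha/2}/(-z)^\alpha$ as $|z|\to\infty$. Expanding \eqref{eqGint} to first order in $Y_z$,
\begin{equation*}
G_\alpha(z) = -\frac{1}{z} + \frac{c(\alpha)\,\Gamma(\alpha/2 + 1)}{z}\, Y_z + O(Y_z^2/z),
\end{equation*}
and taking the boundary value as $z \to x + i0$ with $|x|$ large, one reads off $\mathrm{Im}\, G_\alpha(x + i0) \sim -\pi L_\alpha / |x|^{1+\alpha}$ with an explicit positive constant $L_\alpha$ depending on $\alpha$ through $c(\alpha)$, $\Gamma(\alpha/2 + 1)$, and the trigonometric factors produced by the chosen branch of $(-z)^\alpha$. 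Stieltjes--Perron then gives the claimed power-law density; the case $x \to -\infty$ follows by the symmetry in (1). The main obstacle I anticipate is (3): upgrading ``Lebesgue-null'' to the sharper ``polar'' statement for $K_\alpha$, since this is the one point in the argument where a purely real-variable use of the implicit function theorem is not enough and one genuinely needs to view $K_\alpha$ as the real trace of a zero set of a holomorphic function.
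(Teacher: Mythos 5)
Your handling of items (1), (2) and (4) is essentially the paper's: symmetry by symmetrizing the entry distribution (legitimate in view of Remark \ref{Remark1}), unbounded support as a consequence of the tail, and the tail itself by expanding the fixed-point equation at infinity, where $Y_z\sim C(\alpha)g_\alpha(0)(-z)^{-\alpha}$ and a first-order expansion of \eqref{eqGint} gives $\Im G_\alpha(x+i0)\sim \mathrm{const}\cdot \Im(Y_x)/x$. The genuine problem is item (3). You apply the analytic implicit function theorem ``at the point $(Y_{x_0+i0},x_0)$'', which presupposes that $Y_z$ has a finite limit as $z\to x_0$ from $\C^+$ at every real $x_0$ outside the degeneracy set $\{\partial_Y F=0\}$. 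Nothing in your proposal establishes the existence of such boundary values: $Y_z$ is only known to be analytic on $\C^+$ with a bound that degenerates at the real axis (it comes from $|X_z|\le |\Im(z)|^{-\alpha/2}$), so a priori $Y_z$ could tend to infinity, or oscillate, as $z\to x_0$; in that case there is no point at which to apply the implicit function theorem, and even when some finite limit point exists along a subsequence one must still know that $Y_z$ actually converges to it in order to identify the branch produced by the implicit function theorem with $Y_z$ on $\C^+$ near $x_0$. Consequently your inclusion of $K_\alpha$ in the real trace of $\{F=0\}\cap\{\partial_Y F=0\}$ is unjustified: the dangerous exceptional set is not where the Jacobian degenerates but where $Y_z$ has no finite boundary limit at all, and that set is invisible to your variety argument.

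This missing ingredient is exactly what the paper's proof is organized around. One first shows that for $x\neq 0$ the cluster set of $Y_z$ at $x$ is a single point of $\C\cup\{\infty\}$: any finite cluster value $v$ satisfies $C(\alpha)g_\alpha(c(\alpha)v)=e^{i\pi\alpha}x^\alpha v$, and if the cluster set had two points it would be a continuum, forcing this identity for all $v\in\C$ by analytic continuation, a contradiction. The remaining bad set $K'_\alpha=\{x:\ Cl(x)=\{\infty\}\}$ is then controlled by observing that the fixed-point equation makes $Y_z$ univalent on $\C^+$ and invoking Beurling's theorem on boundary behaviour of univalent functions, which yields that $K'_\alpha$ has capacity zero (its boundedness comes from the behaviour of $Y_z$ at infinity). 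Only after this does the implicit function theorem enter, and the set where the $Y$-derivative of the equation vanishes is shown there to be finite (isolated points in a bounded set), not merely of capacity zero. So to repair (3) you need the cluster-set/univalence/Beurling ingredient, or some substitute controlling where finite boundary values of $Y_z$ exist; the sharp ``capacity zero'' feature of $K_\alpha$ comes from that step, not from the degeneracy locus of the implicit function theorem.
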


A second and different characterization of $\mu_{\alpha}$ is
proposed in the physics literature by Cizeau-Bouchaud \cite{CB}.
This description has been controversial (see \cite{burda} for a
discussion and numerical simulations). The strategy used in
\cite{CB} is also based on the convergence of the resolvent, but on
the real axis as opposed to our proof of convergence away from the
real axis. We unfortunately cannot make sense of the strategy used
in \cite{CB}.  We discuss in section \ref{sectCB} the link between
our characterization given in Theorem \ref{main2} and the
Bouchaud-Cizeau characterization (after correction of a small
typographical error in \cite{CB} already noted by \cite{burda}).

\begin{rmk}
We are not able to prove in general that the exceptional set $K_{\alpha}$ of
Theorem \ref{main2} is empty, or reduced to zero, even though we
conjecture this is true.  This would say that $\mu_{\alpha}$ has a
smooth density everywhere (except may be at zero) as suggested by
numerical simulations and accepted by the physics literature.
Recent work in progress with A. Dembo indicates 
that $K_\alpha$ is at most the
origin for $\alpha\le 1$.  This question is discussed
further in Section \ref{studylimit}.
\end{rmk}

\bigskip
We also describe below (in section \ref{zaka}) a third
characterization of $\mu_{\alpha}$, more combinatorial in nature. It
is based on an extension (due to I.Zakharevich, \cite{zakh} ) of the
classical moment method  rather than the resolvent approach used
both by \cite{CB} and us. Obviously because of the heavy tails and
thus of the absence of moments, one would have to do it first for
truncated matrices and then try to lift the truncation. More
precisely if one truncates the entries at the level $Ba_N$, for a
fixed $B>0$ and define $x_{ij}^B= x_{ij}1_{|x_{ij}|\le Ba_N }$ one
can compute the moments of the empirical measures $\mun_{A_N^B}$ of
the truncated matrix $A_N^B(ij)=a_N^{-1} x_{ij}^B$
$$\int x^k d\mun_{A_N^B}(x)=\frac{1}{N}\tr\left((A_N^B)^k\right).$$
and study their convergence when N tends to infinity.
We establish
in section \ref{zaka} that
\begin{theo}\label{main4}With the above notations,and under the
hypothesis of Theorem (\ref{main})and the additional hypothesis:
\begin{equation}
\label{skewness} \lim_{u \ra \infty}
\frac{\P(x(ij)>u)}{\P(|x(ij)|>u)}= \theta \in [0,1]
\end{equation}
\begin{enumerate}
\item $\E[\mun_{A_N^B}]$ converges weakly to a probability
measure $\mu^B_{\alpha}$ uniquely determined by its moments and
independent of the parameter $\theta$. This measure $\mu^B_{\alpha}$
has unbounded support and is symmetric.
\item $\mu^B_{\alpha}$ converges weakly to  $\mu_{\alpha}$
as $B$ tends to infinity.
\end{enumerate}
\end{theo}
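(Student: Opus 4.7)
The plan is to follow Wigner's trace method in the combinatorial form developed by Zakharevich \cite{zakh} for heavy tails. Part (1) will come from a direct computation of $\E[\trn((A_N^B)^k)]$ (which is well defined for all $k$ since the truncated entries are bounded by $B$ in absolute value), and Part (2) will then follow by combining Part (1), Theorem~\ref{main}, and the truncation bound of Section~\ref{cutoff}.

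\emph{Part (1).} Expanding
\[
\E\,\trn\!\bigl((A_N^B)^k\bigr)=\frac{1}{N a_N^k}\!\sum_{(i_0,\ldots,i_{k-1})}\E\!\Bigl[\prod_{j=0}^{k-1}x^B_{i_j i_{j+1}}\Bigr],\qquad i_k:=i_0,
\]
I would group closed walks by the isomorphism class of their underlying multigraph $G=(V,E,\vec m)$ with edge-multiplicities. The needed input is a Karamata-type moment asymptotic: for every integer $m>\alpha$,
\[
N\,a_N^{-m}\,\E\!\bigl[|x_{ij}^B|^m\bigr]\;\xrightarrow[N\to\infty]{}\;c_m(B):=\frac{\alpha}{m-\alpha}\,B^{m-\alpha},
\]
which follows from \eqref{stabledomain}, \eqref{normalisation}, integration by parts, and Karamata's theorem. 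For a graph with $v$ vertices and $e$ distinct edges, the number of vertex labelings is $\sim N^v$, so the combined contribution is of order $N^{v-1-e}\prod_{f\in E}c_{m_f}(B)$; since any closed walk's underlying graph is connected, $v\le e+1$, and only trees ($v=e+1$) survive the $N\to\infty$ limit.

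On any tree, every edge must be traversed an even number of times by a closed walk (leaf-pruning induction), so only even moments of $x_{ij}^B$ actually enter. This simultaneously gives $\theta$-independence (odd moments never appear) and symmetry of $\mu_\alpha^B$ (odd-length traces vanish by the same parity argument). I would then read off
\[
M_{2\ell}^B:=\lim_{N\to\infty}\E\,\trn\!\bigl((A_N^B)^{2\ell}\bigr)=\sum_{(T,\vec m)}W(T,\vec m)\prod_{f\in E(T)}c_{m_f}(B),
\]
where $T$ ranges over rooted plane tree shapes with $e(T)\le\ell$ edges, $\vec m$ over even multiplicity vectors with $\sum m_f=2\ell$, and $W(T,\vec m)$ counts the closed Eulerian walks realizing $(T,\vec m)$. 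A crude count $W(T,\vec m)\le (2\ell)!$ combined with $c_m(B)\le B^m$ yields $M_{2\ell}^B\le (C_\alpha B)^{2\ell}$, so Carleman's condition is satisfied and the moments $(M_k^B)_k$ determine a unique probability measure $\mu_\alpha^B$ on $\R$. Tightness of $\E[\mun_{A_N^B}]$ (inherited from Section~\ref{sectiontight} applied to the truncated matrix) then promotes moment convergence to weak convergence. Unbounded support is forced by the single-edge tree of multiplicity $2\ell$, which contributes $c_{2\ell}(B)=\tfrac{\alpha}{2\ell-\alpha}B^{2\ell-\alpha}$ to $M_{2\ell}^B$, so $\limsup_\ell (M_{2\ell}^B)^{1/(2\ell)}=\infty$.

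\emph{Part (2).} The cutoff estimate of Section~\ref{cutoff} bounds, for every bounded continuous $f$,
\[
\limsup_{N\to\infty}\Bigl|\textstyle\int f\,d\E[\mun_{A_N}]-\int f\,d\E[\mun_{A_N^B}]\Bigr|
\]
by a quantity going to $0$ as $B\to\infty$. Combining this with Part~(1) and Theorem~\ref{main}, I would send $N\to\infty$ first and then $B\to\infty$ to obtain $\mu_\alpha^B\to\mu_\alpha$ weakly.

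\emph{Main obstacle.} The delicate bookkeeping is checking that the non-tree graphs---where odd-multiplicity edges and the $\theta$-dependent signed moments of $x_{ij}^B$ do arise---really are $O(1/N)$. This is particularly subtle when $\alpha<1$, where $a_N^{-1}\E[x_{ij}^B]$ is of the same order $B^{1-\alpha}/N$ as the even-moment contributions; the saving grace is the extra $N^{v-1-e}\le N^{-1}$ factor carried by every non-tree, but one must ensure that the combinatorial explosion of walks does not overwhelm it. The second delicate point is the Carleman bound on $W(T,\vec m)$; once these are in hand, the rest is routine moment-method accounting.
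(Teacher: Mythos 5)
Your strategy is the paper's strategy (moment method for part (1), truncation estimates plus Theorem \ref{main} for part (2), which the paper observes was already carried out inside the proof of Lemma \ref{tight}); the difference is that you redo the tree combinatorics by hand while the paper verifies only the moment asymptotics $C_m$ and imports the combinatorics from Zakharevich. Two points in your write-up do not stand as written. First, your walk expansion is only valid after centering, and the dangerous regime is $\alpha>1$, not $\alpha<1$ as you claim. Your Karamata asymptotics cover only $m>\alpha$; for $\alpha\in(1,2)$ the entries have a finite mean, so $a_N^{-1}\E[x_{ij}^B]$ is of order $N^{-1/\alpha}$, not $1/N$, and a multiplicity-one edge costs $N^{-1/\alpha}\gg N^{-1}$. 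Then non-tree shapes are not killed by $N^{v-1-e}\le N^{-1}$: the plain $4$-cycle contributes $\asymp N^{3-4/\alpha}\,\E[x]^4$ to $\E\,\trn((A_N^B)^4)$, which diverges for $\alpha>4/3$ when $\E[x]\neq 0$, so your claimed moment limits are false for non-centered entries. Since Theorem \ref{main4} assumes nothing about the mean, you must first replace $A_N^B$ by $A_N^B-\E[A_N^B]$, a rank-one perturbation which is harmless for the limiting spectral law by \eqref{Lidskii} as in Remark \ref{centering}, and only then expand; in the case $\alpha<1$ that you single out, $a_N^{-1}\E[|x_{ij}^B|]\asymp B^{1-\alpha}/N$ and no centering is needed.

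Second, your unbounded-support argument is incorrect: for the single-edge tree, $c_{2\ell}(B)^{1/(2\ell)}=\bigl(\tfrac{\alpha}{2\ell-\alpha}\bigr)^{1/(2\ell)}B^{1-\alpha/(2\ell)}\ra B$, a finite limit, so this tree shows at most that the support reaches scale $B$. Relatedly, the bound $M_{2\ell}^B\le (C_\alpha B)^{2\ell}$ you state would force the support into $[-C_\alpha B,C_\alpha B]$ and contradicts your own claim; it is false because you dropped the walk count $W(T,\vec m)$, which genuinely grows super-exponentially. The correct mechanism is combinatorial: for a star with $k$ edges of multiplicities $2m_1,\dots,2m_k$ ($\sum m_i=\ell$) the number of closed walks is the multinomial $\ell!/(m_1!\cdots m_k!)$, so $M_{2\ell}^B$ grows like $\ell!$ times an exponential in $\ell$; this yields $(M_{2\ell}^B)^{1/(2\ell)}\ra\infty$ (unbounded support) while Carleman's condition $\sum_\ell (M_{2\ell}^B)^{-1/(2\ell)}=\infty$ still holds (determinacy). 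This is precisely the content the paper takes from Zakharevich's Propositions 9, 10 and 12, so your proposal needs these two repairs before it closes the argument.
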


The moments of $\mu^B_{\alpha}$ are described combinatorially in
Section \ref{zaka}. Thus Theorem \ref{main4} gives a third,
independent, description of the limiting measure $\mu_{\alpha}$. As
we will see in Section \ref{zaka}, the first part of Theorem
(\ref{main4}) is a direct consequence of a general combinatorial
result of I.Zakharevich and its proof is essentially given in
\cite{zakh}. The convergence of these Zakharevich measures to our
$\mu_{\alpha}$ establishes a link between this combinatorial
description and the one we have given in terms of Stieltjes
transforms in Theorem \ref{main2}. This link is far from
transparent.

\begin{rmk}
We note that the limiting measure $\mu^B_{\alpha}$ is in fact
independent of the skewness parameter $\theta$. Thus it is
insensitive to the hypothesis (\ref{skewness}) about the upper and
lower tails of the distribution of the entries. This is coherent
with Remark \ref{Remark1}.
\end{rmk}

\begin{rmk}
The case $\alpha=2$ is covered neither by the classical Wigner
theorem (which asks for a second moment) nor by our results so far.
In fact it is easy to see, using the combinatorial approach of
Theorem \ref{main4} that the limit law is then the semi-circle, even
though the normalization differs from the usual one.
\end{rmk}

Finally, let us mention that the behavior of the edge of the
spectrum of heavy tailed matrices (when $\alpha\in ]0,2[$) has been
established by Soshnikov \cite{soshi}. The largest eigenvalues are
asymptotically, in the scale $a_N^2$, distributed as a Poisson point
process with intensity $\alpha^{-1} x^{-\alpha-1} dx$. This is in
sharp contrast with the Airy determinantal process description of
top eigenvalues for the case of light tailed entries \cite{sosh} but
in perfect agreement with our result about the tail of
$\mu_{\alpha}$ given in Theorem \ref{main3}.

\section{Truncating the entries}\label{cutoff}

Since the entries of our random matrices have very few moments, it
will be of importance later to truncate them. We introduce the
appropriate truncated matrices in this section and show how their
spectral measure approximate the spectral measure of the original
matrices.

Let us consider $X_N^B$ (resp. $X_N^\kappa$) the Wigner matrix with
entries $x_{ij}1_{|x_{ij}|\le Ba_N}$ for $B>0$, respectively
$x_{ij}1_{|x_{ij}|\le N^{\kappa}a_N}$ for $\kappa>0$. Also define
$$A_N=a_N^{-1} X_N,
\quad A^B_N=a_N^{-1} X_N^B, \quad A^\kappa_N=a_N^{-1}X_N^\kappa$$
Let us remark here that the threshold $a_N$ is precisely the scale
of the largest entry in a row (or a column) of the random matrix
$X_N$, while the scale of the largest entry (or of the largest
eigenvalue) of the whole matrix is $a_N^2$ i.e roughly
$N^{\frac{2}{\alpha}}$.

We want to state that the spectral measures of the matrices $A_N,
  A^B_N$ and $A^\kappa_N$ are very close in a well chosen distance,
compatible with the weak topology. The standard Dudley distance $d$
is defined on $\Pa(\R)$  by
$$d(\mu,\nu)=\sup_{ ||f||_\La\le 1}\left| \int f d\nu- \int f d\mu\right|$$
where the supremum is taken over all Lipschitz functions $f$ on $\R$ such
that $\|f\|_\La \leq 1$, where the norm $\|f\|_\La$ is defined by
$$\|f\|_\La:=\sup_{x\neq y}\frac{|f(x)-f(y)|}{|x-y|}+
\sup_x|f(x)|.$$ We will use the following variant $d_1 $ of the
Dudley distance.

$$d_1(\mu,\nu)=\sup_{ ||f||_\La\le 1, f\uparrow}
\left| \int f d\nu- \int f d\mu\right|
$$
where the supremum is taken over non-decreasing Lipschitz functions
such that $\|f\|_\La \leq 1$ . The Dudley distance $d$ is well known
to be a metric compatible with the weak topology and the following
Lemma shows that so is the variant $d_1$.

\begin{lem}\label{d1weak}
$d_1$ is compatible with the weak topology on $\Pa(\R)$, i.e if
$\mu$ is a positive measure on $\R$ such that there exists
$\mu^n\in\Pa(\R)$ so that
$$\lim_{n\ra\infty} d_1(\mu^n,\mu)=0,$$
then $\mu^n$ converges weakly to $\mu$ and $\mu\in\Pa(\R)$.
Reciprocally, if $\mu_n$ converges to $\mu$ weakly, $d_1(\mu_n,\mu)$
goes to zero. If a sequence $\mu_n\in\Pa(\R)$ is Cauchy for $d_1$,
it converges weakly.
\end{lem}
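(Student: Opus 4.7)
The plan is to deduce all three assertions by comparing $d_1$ with the standard Dudley distance $d$ and by direct manipulation of cumulative distribution functions. Since the admissible class for $d_1$ is a subset of that for $d$, one has $d_1\le d$; combined with the classical fact that $d$ metrizes the weak topology on $\Pa(\R)$, this immediately yields the converse statement in the lemma, namely that weak convergence implies $d_1$-convergence.

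For the first assertion, observe that the constant function $f\equiv 1/2$ belongs to the admissible class for $d_1$, being non-decreasing with $\|f\|_\La=1/2\le 1$; testing $d_1(\mu^n,\mu)\to 0$ against it forces $\mu(\R)=\lim_n\mu^n(\R)=1$, so $\mu\in\Pa(\R)$. To upgrade to weak convergence, fix a continuity point $x_0$ of $F_\mu(x):=\mu((-\infty,x])$ and, for small $\epsilon>0$, introduce the non-decreasing piecewise linear ramps
$$g_{x_0,\epsilon}(x)=\min\bigl(1,\max(0,(x-x_0)/\epsilon)\bigr),\qquad h_{x_0,\epsilon}(x)=\min\bigl(1,\max(0,(x-x_0+\epsilon)/\epsilon)\bigr);$$
after rescaling by $\epsilon/(1+\epsilon)$ each lies in the admissible class for $d_1$, so the hypothesis gives $\int g_{x_0,\epsilon}\,d\mu^n\to\int g_{x_0,\epsilon}\,d\mu$ and the analogous limit for $h_{x_0,\epsilon}$. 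Sandwiching via $\mu^n([x_0+\epsilon,\infty))\le\int g_{x_0,\epsilon}\,d\mu^n\le\mu^n((x_0,\infty))$ and $\mu^n([x_0,\infty))\le\int h_{x_0,\epsilon}\,d\mu^n\le\mu^n((x_0-\epsilon,\infty))$, passing to the limit $n\to\infty$ and then sending $\epsilon\to 0$ yields $F_\mu(x_0-\epsilon)\le\liminf_n F_{\mu^n}(x_0)\le\limsup_n F_{\mu^n}(x_0)\le F_\mu(x_0+\epsilon)\to F_\mu(x_0)$; convergence of cdfs at continuity points of the limit is equivalent to weak convergence.

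For the Cauchy assertion I would first establish tightness of $(\mu_n)$ and then invoke Prokhorov together with the two preceding steps. The right tail is controlled by the rescaled ramp $\tfrac12 g^+_M(x):=\tfrac12\min(1,\max(0,x-M))$ (which has $\|\cdot\|_\La=1$): given $\varepsilon>0$, choose $N$ with $d_1(\mu_n,\mu_N)<\varepsilon$ for all $n\ge N$, then pick $M$ large enough that $\int g^+_M\,d\mu_N<\varepsilon$; this forces $\mu_n([M+1,\infty))\le\int g^+_M\,d\mu_n\le 3\varepsilon$ for every $n\ge N$, with the finitely many $\mu_n$, $n<N$, handled individually. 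The left tail is handled analogously with $\tfrac12 g^-_M(x):=\tfrac12\min(1,\max(0,x+M))$, but via the bound $\mu_n((-\infty,-M])\le 1-\int g^-_M\,d\mu_n$, which follows from $g^-_M\ge\mathbf 1_{[-M+1,\infty)}$. Prokhorov then extracts a subsequence converging weakly to some $\mu\in\Pa(\R)$; the converse direction yields $d_1$-convergence of that subsequence to $\mu$, and the Cauchy property promotes this to $d_1$-convergence of the whole sequence, hence (by the first assertion) to weak convergence.

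The main conceptual obstacle is the one-sidedness of the admissible class: a non-decreasing function bounded in $[0,1]$ may vanish on a left half-line but not on a right half-line, so left-tail mass is accessible only indirectly via $\mu_n((-\infty,-M])\le 1-\int g^-_M\,d\mu_n$ rather than symmetrically with the right-tail argument. Once this asymmetry is accommodated, the remaining steps reduce to routine Lipschitz approximation of step functions.
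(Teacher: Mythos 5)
Your proof is correct, but it takes a genuinely different route from the paper's in two of the three parts. For the implication ``$d_1$-convergence $\Rightarrow$ weak convergence'', the paper writes a compactly supported Lipschitz $f$ as $f(0)+\int_0^x g(y)\,dy$ and splits $g$ into its positive and negative parts, so that $f$ is a difference of two non-decreasing Lipschitz functions; this gives vague convergence against all such $f$ at once, and the test function $f\equiv 1$ then restores the mass and upgrades vague to weak convergence via a cutoff approximating the unit. You instead secure the mass with a constant test function and then sandwich the distribution functions at continuity points of $F_\mu$ using rescaled monotone ramps; this is more elementary and bypasses the vague-to-weak step, at the price of being tied to the one-dimensional cdf picture (the paper's decomposition argument would generalize more readily). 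The converse direction is identical in both ($d_1\le d$). For the Cauchy statement the paper invokes vague sequential compactness of probability measures plus the observation that the Cauchy property pins down the limit, and then recovers weak convergence ``by the mass property'', which is rather terse; you instead extract genuine Prokhorov tightness directly from the Cauchy property via one-sided ramps and then apply Prokhorov together with the two previous parts. Your route proves a stronger intermediate fact (uniform tightness) and is arguably more self-contained than the paper's argument. One small slip in wording: your left-tail bound $\mu_n((-\infty,-M])\le 1-\int g^-_M\,d\mu_n$ follows from $g^-_M\le \mathbf{1}_{(-M,\infty)}$ (i.e.\ $g^-_M$ vanishes on $(-\infty,-M]$ and is bounded by $1$), whereas the inequality $g^-_M\ge \mathbf{1}_{[-M+1,\infty)}$ that you cite is the one needed to guarantee that $\int g^-_M\,d\mu_N$ is close to $1$ for $M$ large; both facts are immediate, so this does not affect the validity of the argument.
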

\begin{proof} A compactly supported Lipschitz function $f$  can be
written as
$$f(x)= f(0)+ \int_0^x g(y)dy$$
where $g$ is a borelian function bounded by the Lipschitz norm of
$f$. Writing
$$f(x)-f(0)=\int_0^ x 1_{g(y)\ge 0} g(y) dy
-\int_0^x |g(y)|1_{g(y)<0} dy$$ we see that f can be written as the
difference of two non-decreasing Lipschitz functions. Hence, if
$d_1(\mu^n,\mu)$ goes to zero as $n$ goes to infinity, $\int
fd\mu_n$ converges to $\int f d\mu$ for all Lipschitz compactly
supported functions. Hence, $\mu_n$ converges to $\mu$ for the vague
topology. On the other hand, if $\mu^n$ converges to $\mu$ for
$d_1$, we must have, taking $f=1$,
$$\mu(1)=\lim_{n\ra\infty}\mu^n(1)=1$$
which is enough to guarantee also the weak convergence.
Indeed, if we now take $f\in\Ca_b(\R)$, and $g$ compactly supported
with values in $[0,1]$,
$$|\mu_n(f)-\mu(f)|\le \|f\|_\infty\left(\mu(1)+\mu_n(1)-\mu(g)
-\mu_n(g)\right)+|\mu_n(fg)-\mu(fg)|$$ Letting first $n$ going to
infinity and then taking $g$ approximating the unit, we obtain the
result. The second statement is clear since $d_1\le d$ with $d$ the
standard Dudley distance (obtained by taking the supremum over all
Lipschitz functions with norm bounded by one) and the result is well
known to hold for $d$. Finally, if a sequence $\mu_n$ is Cauchy for
$d_1$, it converges for the vague topology (as it is tight for the
vague topology, and the property of being Cauchy uniquely prescribes
the limit) and then for the weak topology by the mass property.

\end{proof}

We next show that truncation does not affect much
the spectral measures in the $d_1$ distance.

\begin{theo}\label{corconv}
\begin{enumerate}
\item For every $\e>0$ there exists  $B(\e)<\infty$ and $\d(\e,B)>0$ when $B>B(\e)$
such that, for N large enough
$$\P\left(d_1(\mun_{A_N},\mun_{A_N^B})>\e\right)
\le e^{-\d(\e,B) N}.$$
\item
For $\kappa>0$, and $a\in]1-\alpha \kappa,1[$, there exists a finite
constant $C(\alpha,\kappa,a)$ such that for all $N\in\N$,
$$
\P\left(d_1(\mun_{A_N},\mun_{A_N^\kappa})>N^{a-1}\right) \le
e^{-CN^a\log N }. $$
\end{enumerate}
\end{theo}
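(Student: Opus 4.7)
The plan is to combine three standard ingredients: a rank-based upper bound on the $d_1$ distance between spectral measures (via Weyl interlacing), a control on the rank of the defect matrix $A_N-A_N^B$ (resp.\ $A_N-A_N^\k$) by the number of entries exceeding the truncation threshold, and a Chernoff-type concentration estimate for this number, which is a sum of independent Bernoulli variables.

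First I would establish the general inequality $d_1(\hh\mu_{H_1},\hh\mu_{H_2})\ppq 2\,\mbox{rank}(H_1-H_2)/N$ valid for any two Hermitian $N\ts N$ matrices $H_1,H_2$. For a non-decreasing Lipschitz $f$ with $\|f\|_\La\ppq 1$, integration by parts against the cumulative distribution functions $F_1,F_2$ of $\hh\mu_{H_1},\hh\mu_{H_2}$ yields
$$\left|\int f\,d(\hh\mu_{H_1}-\hh\mu_{H_2})\right|=\left|\int (F_2-F_1)\,df\right|\ppq 2\,\sup_x|F_1(x)-F_2(x)|,$$
since $df$ is a positive measure of total mass at most $2\|f\|_\infty\ppq 2$; Weyl's interlacing inequalities then give $\sup_x|F_1(x)-F_2(x)|\ppq \mbox{rank}(H_1-H_2)/N$. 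Next I would bound the rank of the defect: the symmetric matrix $A_N-A_N^B$ has non-zero entries only at positions $(i,j)$ with $|x_{ij}|>Ba_N$, so if $Z_B$ denotes the number of such pairs with $i\ppq j$, this matrix is supported on an index set $R\ts R$ with $|R|\ppq 2Z_B$, and hence $\mbox{rank}(A_N-A_N^B)\ppq 2Z_B$. Combining,
$$d_1(\hh\mu_{A_N},\hh\mu_{A_N^B})\ppq \frac{4Z_B}{N},\qquad d_1(\hh\mu_{A_N},\hh\mu_{A_N^\k})\ppq \frac{4Z_\k}{N}.$$

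Finally, $Z_B$ and $Z_\k$ are sums of independent Bernoulli variables, and Chernoff's inequality applies. By \eqref{stabledomain} and \eqref{normalisation}, combined with Potter's bounds for the slowly varying function $L$, one has $p_B:=\P(|x_{ij}|>Ba_N)\ppq C\,B^{-\a}/N$ for $N$ large, hence $\E Z_B\ppq CN/B^\a$. Choosing $B(\e)$ so that $C/B^\a$ is small compared to $\e$, the multiplicative Chernoff bound yields $\P(Z_B\pgq \e N/4)\ppq e^{-\d(\e,B)N}$ with $\d(\e,B)>0$, giving part (1). For part (2), $p_\k\ppq CN^{-1-\a\k}$ yields $\E Z_\k\ppq CN^{1-\a\k}$, and since $a\in(1-\a\k,1)$ makes $N^a$ dominate the mean, Chernoff delivers
$$\P(Z_\k\pgq N^a/4)\ppq \exp\left(-\tfrac14 N^a\log\tfrac{N^a}{e\,\E Z_\k}\right)\ppq \exp(-cN^a\log N),$$
because $\log(N^a/\E Z_\k)\sim (a-1+\a\k)\log N$. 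The main obstacle is really only the bookkeeping around the slowly varying function---in particular controlling $L(Ba_N)/L(a_N)$ uniformly via Potter's bounds---and keeping the exponents straight through the Chernoff estimate so as to extract the $N^a\log N$ factor in part (2); the conceptual ideas (rank perturbations control spectral distances, and sparse Bernoulli sums concentrate) are entirely classical.
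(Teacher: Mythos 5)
Your proposal is correct and follows essentially the same route as the paper: the paper likewise reduces $d_1$ to a rank bound of the form $d_1(\mun_X,\mun_Y)\le 2\,\mathrm{rank}(X-Y)/N$ (cited as Lidskii's theorem rather than proved via interlacing), bounds the rank by counting rows/entries exceeding the truncation level, and concludes with an exponential Markov (Chernoff) estimate for the resulting sum of independent Bernoulli indicators, obtaining the same $(a-1+\alpha\kappa)N^a\log N$ exponent in part (2). The only differences are cosmetic (you count exceedance pairs, the paper counts nonzero rows via $M^{\pm}$), and your explicit appeal to Potter's bounds for the regime $B=N^{\kappa}$ is a reasonable way to handle the uniformity in the slowly varying function that the paper treats implicitly.
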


\begin{rmk}
This result depends crucially on the proper choice of the truncation
level. Had we truncated the entries at a lower level, say
$N^{\kappa}a_N$ with $\kappa < 0$, then the limit law would be the
semi-circle. Thus the effect of the heavy tails would have been
completely canceled by the truncation.
\end{rmk}

\begin{proof}

Let X and Y be two  $N\times N$ Hermitian matrices, and $\mun_X$ and
$\mun_Y$ be their spectral measures. Then  Lidskii's theorem implies
(see e.g \cite{GZe} p. 500) that, if $d$ is the rank of $X-Y$,then
\begin{equation}
\label{Lidskii}
d_1(\mun_{X},\mun_{Y})\le \frac{2 d}{N}
\end{equation}

Consequently, the following Lemma implies Theorem (\ref{corconv}).
\end{proof}

\begin{lem}\label{tronc}
\begin{enumerate}
\item
For every $\e>0$, there exists  $B(\e)>0$ and $\d(\e,B)>0$ when
$B>B(\e)$ such that
$$\P(\mbox{rank}(X_N-X_N^B)\ge \e N)\le e^{-\d(\e,B) N}$$

\item
For $\kappa>0$, and $a\in]1-\alpha \kappa,1[$ there exists a finite
constant $C(\alpha,\kappa,a)$ such that for all $N\in\N$,
\begin{equation}
\P(\mbox{rank}(X_N-X_N^\kappa)\ge N^a)\le e^{-C N^a\log N}
\end{equation} .
\end{enumerate}
\end{lem}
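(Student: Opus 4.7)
The plan is to reduce both claims to a tail estimate on a binomial count via a simple rank bound. The matrix $X_N-X_N^B$ is symmetric and its entries are supported on the random set $S^B=\{(i,j):|x_{ij}|>Ba_N\}$; any index $i$ that does not appear in $S^B$ gives a zero row and a zero column, so the rank of $X_N-X_N^B$ is bounded by the number of distinct indices that appear in $S^B$. Letting
$$Z^B:=\#\{(i,j):i\le j,\ |x_{ij}|>Ba_N\}$$
count the bad entries in the upper triangle, and noting that each such entry contributes at most two indices, I obtain $\mbox{rank}(X_N-X_N^B)\le 2Z^B$ and the analogous bound with $B$ replaced by $N^\k$. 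Thus the two parts of the lemma reduce to upper-tail estimates for $Z^B$ and $Z^\k$, which are sums of $M_N:=N(N+1)/2$ i.i.d.\ Bernoulli variables with parameters $p_B=\P(|x_{ij}|>Ba_N)$ and $p_\k=\P(|x_{ij}|>N^\k a_N)$ respectively.

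Next, I would use the regular variation assumption \eqref{stabledomain} together with the definition \eqref{normalisation} of $a_N$ to estimate $p_B\sim B^{-\a}/N$ for each fixed $B$ and $p_\k=N^{-1-\a\k+o(1)}$; the slowly varying prefactor is absorbed using Potter's bounds. The main quantitative input is then the standard binomial Chernoff inequality
$$\P(S_{M_N}\ge k)\le\Bigl(\frac{eM_N p}{k}\Bigr)^{k}\quad\text{for } k\ge M_N p,$$
which follows from $\binom{M_N}{k}\le(eM_N/k)^k$ together with a union bound. For part (1), choosing $B(\e)$ so large that $B^{-\a}<\e/4$ gives $M_N p_B<\e N/4$ for $N$ large, and the Chernoff bound above applied with $k=\lceil\e N/2\rceil$ yields $\P(Z^B\ge\e N/2)\le \exp(-\d(\e,B) N)$ with $\d(\e,B)=\tfrac{\e}{2}\log(\e B^\a/(4e))>0$. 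For part (2), the same bound with $k=\lceil N^a/2\rceil$ and $M_N p_\k\le N^{1-\a\k+o(1)}$ yields
$$\P(Z^\k\ge N^a/2)\le\exp\Bigl(-\tfrac12 N^a\bigl[(a-1+\a\k)\log N - o(\log N)\bigr]\Bigr),$$
and the hypothesis $a>1-\a\k$ makes the bracketed quantity positive and of order $\log N$, giving the claimed $\exp(-CN^a\log N)$ bound for an appropriate constant $C=C(\a,\k,a)$.

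The only step requiring some care is the uniform treatment of the slowly varying function $L$ at the two scales $Ba_N$ (with $B$ fixed but large) and $N^\k a_N$ (with $\k$ fixed and $N\to\infty$): Potter's bounds guarantee that the resulting corrections are of magnitude $N^{o(1)}$, which is negligible against the leading $N$ or $N^a\log N$ term in the exponent. Beyond this technical point I expect no serious obstacle; the proof is a clean combination of a linear-algebraic rank/support bound with a classical large deviation estimate for sums of independent Bernoulli variables.
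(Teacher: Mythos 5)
Your proof is correct, and it follows the same overall strategy as the paper (reduce the rank to a count of entries exceeding the truncation level, then apply a Chernoff-type tail bound), but the reduction and the tail estimate are carried out differently. The paper bounds the rank by the number of nonzero rows, writes that number as $M^-+M^+$ with row indicators $M_i^{\pm}$, exploits the independence of the $M_i^-$ across $i$ (they involve disjoint sets of entries, with non-identical success probabilities $p_i$ whose sum is estimated), and runs an exponential-moment Chernoff bound; part (2) is then obtained by substituting $B=N^{\kappa}$ and $\epsilon=N^{a-1}$ into the same computation. You instead bound the rank by twice the number $Z$ of bad upper-triangular entries, which is a single Binomial$\bigl(N(N+1)/2,\,p\bigr)$ variable, and invoke the elementary bound $\P(S\ge k)\le\binom{M}{k}p^{k}\le(eMp/k)^{k}$; this avoids any discussion of the dependence structure and of the non-identical $p_i$, at the cost of a harmless factor $2$ in the rank bound, and your explicit appeal to Potter's bounds to control $L$ at the moving scale $N^{\kappa}a_N$ is actually a point where you are more careful than the paper, which reuses its fixed-$B$ estimate \eqref{estdebase} without comment. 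One small bookkeeping remark: your stated threshold $B^{-\alpha}<\epsilon/4$ guarantees a positive rate for the true value $M_Np_B\sim NB^{-\alpha}/2$, but not for the particular formula $\delta(\epsilon,B)=\frac{\epsilon}{2}\log(\epsilon B^{\alpha}/(4e))$ you display, which requires $B^{\alpha}>4e/\epsilon$; since the lemma only asks for the existence of some threshold $B(\epsilon)$ (of order $\epsilon^{-1/\alpha}$, as in the paper), this is cosmetic and easily fixed by enlarging $B(\epsilon)$.
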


\begin{proof}(of Lemma \ref{tronc}.)

Let $M_i^-=1$ (resp. $M_i^+=1$) if there exists a $j \le i$ (resp.
$j >i$ such that $|x_{ij}|> Ba_N$, and $M_i^-=0$ (resp. $M_i^+=0$)
otherwise. Define
$$ M^-=\sum_{i=1}^N M_i^-\,\mbox{ and }\, M^+=\sum_{i=1}^N M_i^+.$$
Now let M be the number of non zero rows of the matrix $X_N-X_N^B$,
obviously
\begin{equation}
\label{estofrank}
\mbox{rank}(X_N-X_N^B) \le M \le M^- + M^+,
\end{equation}
so that
$$
\P(\mbox{rank}(X_N-X_N^B)\ge \e N)\le \P(M^- \ge \frac{\e N}{2}) +
\P(M^+ \ge \frac{\e N}{2})\le 2\P(M^- \ge \frac{\e N}{2}).
$$ where we 
observed that $M^+$ is stochastically dominated by 
$M^-$ (which contains the diagonal terms).
But if we denote by $p_i=\P(M_i^-=1)$, we have
$$
p_i=\P(\exists j \le i, |x_{ij}|>
Ba_N)=1-(1-\frac{L(Ba_N)}{(Ba_N)^{\alpha}})^i\le 1-
(1-\frac{c}{NB^{\alpha}})^i
$$
where the later inequality holds for $c>1$ when $N$ is large enough
since
\begin{equation}
\label{estdebase}
\lim_{N\ra\infty}\frac{NL(Ba_N)}{a_N^{\alpha}}=1.
\end{equation}
As a consequence  we can estimate the sum
\begin{equation}
\label{p_i}
\sum_{i=1}^N p_i \le N-
\frac{1-(1-\frac{c}{NB^{\alpha}})^{N+1}}{1-(1-\frac{c}{NB^{\alpha}})}\sim
NC(B)
\end{equation}
where we denoted
$A_N\sim B_N$ if $A_N/B_N$ goes to one as $N$ goes to infinity and
\begin{equation}
C(B)=1-\frac{B^{\alpha}}{c}(1-e^{-\frac{c}{B^{\alpha}}}).
\end{equation}
For any $\lambda>0$, the independence of the $M^-_i$'s gives
$$
\E(\exp\lambda M^-)= \prod_{i=1}^N (1+p_i(e^{\lambda}-1)) \le
\exp[(e^{\lambda}-1)(\sum_{i=1}^N p_i)]
$$
So that we get the exponential upper bound, for N large enough
$$
\P(M^- \ge \frac{\e N}{2})\le e^{-\lambda\frac{\e N}{2}}
\E(\exp\lambda M^-)\le \exp[-N \phi_-(\lambda, \e, B)],
$$
with
$$
\phi_{-}(\lambda, \e, B)= \frac{\lambda\e}{2}
-(e^{\lambda}-1)C(B).
$$
Obviously, since $\lim_{B\ra\infty}C(B)=0$,
for any $\e>0$,  there exists a $B(\e)>0$ (of order
$\e^{-\frac{1}{\alpha}}$) such that when $B>B(\e)$,
$$\delta_{-}(\e, B):=\sup_{\lambda>0}\phi_{-}(\lambda, \e, B)>0$$
and
$$
\P(M^- \ge \frac{\e N}{2})\le  \exp[-N \delta_{-}(\e, B)].
$$
 Using the crude rank
estimate (\ref{estofrank})  proves the first claim of Lemma
(\ref{tronc}).

In order to prove the second claim of Lemma (\ref{tronc}), we simply
replace $B$ by $B(N)=N^{\kappa}$ and $\e$ by $\e(N)=N^{a-1}$ in the
proof above.We get then that
$$
\delta_{-}(\e(N), B(N))\sim \frac{1}{2}(a-1+\alpha\kappa)(N^{a-1} \log
N)
$$
and similarly for $\delta_{+}(\e(N), B(N)$, which proves our second
claim.

\end{proof}

\begin{rmk}
\label{centering}We now let
$A_N^\kappa=a_N^{-1}X_N^\kappa$.
We
note that centering the entries of the matrix $A_N^{\kappa}$ defines
a perturbation of rank one.Hence,  Lidskii's theorem
(see (\ref{Lidskii})) shows that
$$d_1(\mun_{A_N^\kappa},\mun_{A_N^\kappa-\E[A_N^\kappa]})\le
\frac{2}{N}. $$

Thus we may assume that $A_N^\kappa$ is centered
without changing its limiting spectral distribution.
\end{rmk}

\section{Tightness}\label{sectiontight}

We prove in this section that the mean of the spectral measures of
the random matrices $A_N$ and of their truncated versions $A^B_N$ or
$A_N^\kappa$ are tight.

\begin{lem}
\label{tight}
\begin{enumerate}
\item
The sequence $(\E[\mun_{A_N}];N\in\N)$ is tight for the weak
topology on $\Pa(\R)$.
\item
For every $B>0$, and $\kappa>0$, the sequences
$(\E[\mun_{A^B_N}]; N\in\N)$ and
$(\E[\mun_{A_N^\kappa}]); N\in\N)$ are tight for
the weak topology on $\Pa(\R)$.
\end{enumerate}
\end{lem}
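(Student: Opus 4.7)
The plan is to first prove tightness of $(\E[\mun_{A_N^B}])_N$ for each fixed $B>0$ by a direct second-moment bound (permitted since the truncated entries are bounded), and then to transfer tightness to $(\E[\mun_{A_N}])_N$ and $(\E[\mun_{A_N^\kappa}])_N$ using the closeness in $d_1$-distance already established in Theorem \ref{corconv}. A key point is that the metric $d_1$, being defined through \emph{monotone} Lipschitz test functions, is perfectly suited for converting a bound on $d_1$ into a bound on the one-sided tails $[M,\infty)$ or $(-\infty,-M]$ by pairing with a suitable monotone bump.

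For the first step, Chebyshev's inequality gives
$$\E[\mun_{A_N^B}]\bigl([-M,M]^c\bigr)\,\le\,\frac{1}{NM^2}\,\E\bigl[\tr\bigl((A_N^B)^2\bigr)\bigr]\,=\,\frac{N}{M^2 a_N^2}\,\E\bigl[x_{11}^2\, 1_{|x_{11}|\le Ba_N}\bigr].$$
By the standard Karamata computation of truncated second moments of heavy-tailed variables one has $\E[x_{11}^2\, 1_{|x_{11}|\le Ba_N}]\sim \frac{\alpha}{2-\alpha}(Ba_N)^{2-\alpha}L(Ba_N)$, and combining this with the defining asymptotic $NL(a_N)/a_N^\alpha\to 1$ from \eqref{estdebase} together with the slow variation of $L$ produces a constant $C(B)<\infty$ with $\frac{N}{a_N^2}\E[x_{11}^2 \, 1_{|x_{11}|\le Ba_N}]\le C(B)$ uniformly in $N$. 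Hence $\E[\mun_{A_N^B}]\bigl([-M,M]^c\bigr)\le C(B)/M^2$, which is the desired tightness.

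For the second step, fix $\varepsilon\in(0,1)$ and take $B>B(\varepsilon)$ as in Theorem \ref{corconv}. Using the trivial bound $d_1\le 2$, taking expectations in Theorem \ref{corconv}(1) yields $\E[d_1(\mun_{A_N},\mun_{A_N^B})]\le \varepsilon+2e^{-\delta(\varepsilon,B)N}\le 2\varepsilon$ for $N$ large enough; Fubini applied separately to each non-decreasing Lipschitz $f$ with $\|f\|_\La\le 1$ then promotes this to $d_1\bigl(\E[\mun_{A_N}],\E[\mun_{A_N^B}]\bigr)\le 2\varepsilon$. Applying this bound to the non-decreasing Lipschitz function $g_M(x):=\tfrac12\bigl((x-M)\wedge 1\bigr)\vee 0$, which has $\|g_M\|_\La=1$ and satisfies $\tfrac12\,1_{[M+1,\infty)}\le g_M\le \tfrac12\,1_{[M,\infty)}$, one obtains
$$\E[\mun_{A_N}]\bigl([M+1,\infty)\bigr)\,\le\,\E[\mun_{A_N^B}]\bigl([M,\infty)\bigr)+4\varepsilon\,\le\,\frac{C(B)}{M^2}+4\varepsilon,$$
with a symmetric bound for $(-\infty,-M-1]$. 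First taking $M\to\infty$, then $\varepsilon\to 0$, proves tightness of $(\E[\mun_{A_N}])_N$. Tightness of $(\E[\mun_{A_N^\kappa}])_N$ follows by exactly the same argument, now using Theorem \ref{corconv}(2), which provides the even stronger bound $d_1(\mun_{A_N},\mun_{A_N^\kappa})\le N^{a-1}\to 0$ with probability at least $1-e^{-CN^a\log N}$, together with the just-proved tightness of $(\E[\mun_{A_N}])_N$.

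The only real obstacle is that a direct second-moment computation on $A_N$ or on $A_N^\kappa$ does not yield tightness uniformly in $N$: in the first case the entries have no second moment, and in the second the natural bound grows like $N^{\kappa(2-\alpha)}$. The role of the monotone distance $d_1$ introduced in Section \ref{cutoff} (rather than the standard Dudley metric $d$) and of the monotone cut-off $g_M$ is precisely to let us bootstrap the uniform tail bound from the truncated ensemble $A_N^B$ to the unbounded ones, without ever having to control moments of $A_N$ or $A_N^\kappa$ directly.
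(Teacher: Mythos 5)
Your proof is correct, and its first half (the Chebyshev/truncated-second-moment bound for $\E[\mun_{A_N^B}]$) is exactly the paper's argument; the estimates you quote are \eqref{truncatedmoments}--\eqref{contcov2} and the computation $\frac{N}{a_N^2}\E[x_{11}^2 1_{|x_{11}|\le Ba_N}]\le C(B)$ is the same use of Karamata plus \eqref{estdebase}. Where you genuinely diverge is the transfer to $A_N$ and $A_N^\kappa$. The paper does not derive tail bounds at all: it extracts, by a diagonal procedure, limit points $\mu_B$ of $\E[\mun_{A^B_{\phi(N)}}]$ along a common subsequence, uses Lemma \ref{tronc} plus Lidskii (i.e.\ the content of Theorem \ref{corconv}) to show that $(\mu_B)_B$ is Cauchy for $d_1$, invokes Lemma \ref{d1weak} to get a weak limit, and concludes that $\E[\mun_{A_{\phi(N)}}]$ converges to it, which yields tightness as a by-product of subsequential convergence. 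You instead take expectations in Theorem \ref{corconv}, pass to $d_1(\E[\mun_{A_N}],\E[\mun_{A_N^B}])\le 2\e$, and convert this into explicit uniform tail bounds by pairing with the monotone bump $g_M$; this avoids any subsequence extraction, any appeal to Lemma \ref{d1weak}, and gives quantitative tail control, so it is the more elementary and direct route to the stated lemma. What the paper's heavier argument buys is reused later: the convergence of $\mu_B$ as $B\to\infty$ to the weak limit of $\E[\mun_{A_N}]$, established inside this proof, is exactly what is cited in Section \ref{zaka} to prove the second part of Theorem \ref{main4}, and your proof does not produce that. One small remark: the monotonicity of $g_M$ is not actually what makes your transfer work --- any bounded Lipschitz bump paired with the ordinary Dudley distance $d\ge d_1$ would do; the reason the paper works with $d_1$ is upstream, namely that Lidskii's rank inequality \eqref{Lidskii} naturally controls the monotone-test-function metric. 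Also, your ``first $M\to\infty$, then $\e\to0$'' bound holds for $N\ge N_0(\e,B)$ only, so one should add the (trivial) observation that the finitely many remaining measures are individually tight; this is cosmetic.
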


\begin{proof}
We will use the following classical result about
truncated moments (Theorem VIII.9.2 of \cite{Feller}): For any $
\zeta > \alpha$
\begin{equation}
\label{truncatedmoments}
\lim_{t\ra\infty}\frac{\E[|x_{ij}|^{\zeta}1_{|x_{ij}|<t}]}{t^{\zeta-\alpha}L(t)}=
\frac{\alpha}{\zeta-\alpha}.
\end{equation}
Therefore, using (\ref{estdebase}), we have
\begin{equation}
\label{truncatedmoments2} \E[|x_{ij}|^{\zeta}1_{|x_{ij}|<Ba_N}] \sim
\frac{\alpha}{\zeta-\alpha}B^{\zeta-\alpha}\frac{a_N^{\zeta}}{N}
\end{equation}
or  equivalently:

\begin{equation}
\label{truncatedmoments3} \E[|A^B_N(ij)|^{\zeta}] \sim
\frac{\alpha}{\zeta-\alpha}B^{\zeta-\alpha}\frac{1}{N}.
\end{equation}
The version for the truncated matrix $A^{\kappa}_N$ will also be
useful:
\begin{equation}
\label{truncatedmoments4} \E[|A^{\kappa}_N(ij)|^{\zeta}] \sim
\frac{\alpha}{\zeta-\alpha}N^{\kappa(\zeta-\alpha)-1}.
\end{equation}
Using these estimates with $\zeta=2$, one sees that

\begin{equation}\label{contcov2}
\sup_{N\in\N}\E[\frac{1}{N}\tr( (A_N^B)^2)]\sim
\frac{\alpha}{2-\alpha}B^{2-\alpha}
\end{equation}
and that

\begin{equation}\label{contcov3}
\sup_{N\in\N}\E[\frac{1}{N}\tr( (A_N^{\kappa})^2)]\sim
\frac{\alpha}{2-\alpha}N^{\kappa(2-\alpha)}.
\end{equation}
\eqref{contcov2}
shows that  $\E[\mun_{A^B_N}]$ belongs to the compact set
$K_C:=\{\mu\in\Pa(\R); \mu(x^2)\le C\}$ for any
$C>\frac{\alpha}{2-\alpha}B^{2-\alpha}$ and N large enough.
Hence,  the sequence $(\E[\mun_{A^B_N}]); N\in\N)$ is tight, and
thus  any subsequence of $\E[\mun_{A^B_N}]$ has converging
subsequences. We denote by $\mu_B$ a limit point, i.e the limit of a
converging subsequence. By a diagonal procedure, we can insure that
this subsequence is the same for all $B\in\N$, and in particular,
since $d_1$ is compatible with the weak topology, we can find an
increasing function $\phi$ so that for any $\d>0$, $B_0<\infty$,
there exists $N_0<\infty$ so that for $N\ge N_0$, and all $B\le
B_0$,
$$
d_1(\E[\hat \mu_{A_{\phi(N)}^{B}}],\mu_B)\le
\delta.$$
By Lemma \ref{tronc}, and Lidskii's estimate (\ref{Lidskii}), we
have for all $\e>0$,
\begin{equation}\label{muBCauchy0}
d_1(\E[\hat \mu_{A_{\phi(N)}}],\E[\hat\mu_{A^B_{\phi(N)}}])\le \E[
d_1(\hat\mu_{A_{\phi(N)}}, \hat\mu_{A^B_{\phi(N)}})]\le
2\e +e^{-\d(\e,B) {\phi(N)}}
\end{equation}
with $\d(\e,B)>0$ if
$B>B(\e)$.

These two inequalities imply that $(\mu_B, B\in \N)$ is a Cauchy
sequence for the modified Dudley metric $d_1$ and thus converges
when $B$ tends to $\infty$.
Indeed, if we choose $\e,\e',\d>0$ and an integer
number $B_0> B(\e)\vee B(\e')$, we
find that for $B,B'\in [B(\e)\vee B(\e'), B_0]$ and $N>N_0$
\begin{equation}\label{ineql}
d_1(\E[\hat\mu_{A_{\phi(N)} }], \mu_B)\le \delta+2\e +e^{-\d(\e,B)
\phi(N)} \mbox{ and } d_1(\E[\hat\mu_{A_{\phi(N)} }], \mu_{B'})\le
\delta+2\e' +e^{-\d(\e',B') \phi(N)}
\end{equation}
and therefore
\begin{equation}\label{muBCauchy}
d_1(\mu_B,\mu_{B'})\le 2\d +2\e+2\e' +e^{-\d(\e,B)
\phi(N)} +e^{-\d(\e',B')\phi(N)}.
\end{equation}
Letting $N$ going to infinity,
and then $\d$ to zero and $B_0$ to
infinity  we finally deduce that
$$d_1(\mu_B,\mu_{B'})\le 2\e+2\e'$$
provided that $B$ and $B'$ are greater than $B(\e)\vee B(\e')$.
Hence,  $\mu_B$ is a Cauchy sequence for $d_1$ and thus converges
weakly by Lemma \ref{d1weak} as $B$ goes to infinity. 
 As a consequence   of \eqref{ineql} we also find that
$\E[\mun_{A_{\phi(N)}}]$ converges to this  limit as $N$ goes to
infinity. The same holds for the truncated versions
$\E[\hat\mu^{\phi(N)}_{A^\kappa_{\phi(N)}}]$. Thus, we have proved
that $(\E[\mun_{A_{N}}], \E[\mun_{A^\kappa_N}])_{N\in\N}$ are tight.
\end{proof} 

This lemma (\ref{tight}) can be strengthened into a partial
almost-sure tightness result. Consider an increasing function
$\phi:\N\ra\N$ such that $\sum_{N\ge 0}\frac{1}{\phi(N)}<\infty$,
then
\begin{lem}
\label{astight} The sequences
$(\hat\mu_{A^B_{\phi(N)}})_{N\in\N},(\hat\mu_{A_{\phi(N)}})_{N\in\N}
,(\hat\mu_{A^\kappa_{\phi(N)}}
)_{N\in\N}$ are  almost surely tight.

\end{lem}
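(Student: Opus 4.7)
The strategy is to first get the almost sure tightness for the truncated family $\hat\mu_{A^B_{\phi(N)}}$ (fixed $B$) by a second moment estimate plus Chebyshev and Borel--Cantelli, and then transfer it to $\hat\mu_{A_{\phi(N)}}$ and $\hat\mu_{A^\kappa_{\phi(N)}}$ via Theorem \ref{corconv}.

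\textbf{Step 1 (truncated case).} For fixed $B>0$, the second moment of $\hat\mu_{A^B_N}$ equals $N^{-1}\tr((A_N^B)^2)=N^{-1}\sum_{i,j}(A_N^B(ij))^2$, whose expectation is bounded uniformly in $N$ by \eqref{contcov2}. By independence of the $x_{ij}$'s (modulo symmetry) and the deterministic bound $|A_N^B(ij)|\le B$ together with \eqref{truncatedmoments3} for $\zeta=2$, I would bound
\[
\mathrm{Var}\bigl(N^{-1}\tr((A_N^B)^2)\bigr)\le \frac{C}{N^2}\sum_{i\le j}\E\bigl[(A_N^B(ij))^4\bigr]\le \frac{C'B^{4-\alpha}}{N}.
\]
Chebyshev's inequality then gives $\P(|N^{-1}\tr((A_N^B)^2)-\E[\cdot]|>1)\le C'B^{4-\alpha}/N$. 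Along $\phi(N)$ the sum $\sum_N 1/\phi(N)$ converges, so Borel--Cantelli implies that almost surely $\limsup_N\int x^2\,d\hat\mu_{A^B_{\phi(N)}}(x)$ is finite. Markov's inequality then yields the tail bound $\hat\mu_{A^B_{\phi(N)}}(\{|x|>R\})\le R^{-2}\int x^2 d\hat\mu_{A^B_{\phi(N)}}$, and taking $R$ large gives almost sure tightness of $(\hat\mu_{A^B_{\phi(N)}})_N$. Intersecting the full-measure sets over $B$ in a countable set (e.g.\ $B\in\N$) handles all $B$ simultaneously.

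\textbf{Step 2 (transfer to $A_N$).} For each $\eps>0$, pick an integer $B>B(\eps)$ as in Theorem \ref{corconv}(1). Then $\P(d_1(\hat\mu_{A_{\phi(N)}},\hat\mu_{A^B_{\phi(N)}})>\eps)\le e^{-\delta(\eps,B)\phi(N)}$, whose sum over $N$ converges (regardless of the growth rate of $\phi$), so Borel--Cantelli gives $d_1(\hat\mu_{A_{\phi(N)}},\hat\mu_{A^B_{\phi(N)}})\le\eps$ almost surely for all $N$ large. To conclude that $d_1$-closeness transfers tightness, I would test against a Lipschitz bump $\chi_R$ that equals $1$ on $\{|x|\ge R+1\}$ and $0$ on $\{|x|\le R\}$ by splitting $\chi_R=\chi_R^-+\chi_R^+$ into its parts supported on the two halves, each of which (up to sign and a factor $1/2$) is a monotone Lipschitz function of $\|\cdot\|_\Lambda$-norm at most $1$, in the spirit of the decomposition used in Lemma \ref{d1weak}. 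This yields
\[
\hat\mu_{A_{\phi(N)}}(\{|x|>R+1\})\le 4\,d_1(\hat\mu_{A_{\phi(N)}},\hat\mu_{A^B_{\phi(N)}})+\hat\mu_{A^B_{\phi(N)}}(\{|x|>R\}),
\]
which, combined with Step 1, yields almost sure tightness of $(\hat\mu_{A_{\phi(N)}})_N$.

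\textbf{Step 3 (transfer to $A^\kappa_N$).} Applying Theorem \ref{corconv}(2) along $\phi(N)$ gives $d_1(\hat\mu_{A_{\phi(N)}},\hat\mu_{A^\kappa_{\phi(N)}})\le \phi(N)^{a-1}$ with overwhelming probability, and the exceptional probabilities $e^{-C\phi(N)^a\log\phi(N)}$ are summable, so Borel--Cantelli again gives this bound almost surely for $N$ large. The same monotone-decomposition argument as in Step 2 then transfers tightness from $\hat\mu_{A_{\phi(N)}}$ to $\hat\mu_{A^\kappa_{\phi(N)}}$.

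The main obstacle is really only in Step 1: controlling the second moment of $\hat\mu_{A^B_N}$ (a sum of independent random variables) and pushing the concentration through Borel--Cantelli using the summability hypothesis on $\phi$. Steps 2 and 3 are essentially free once one observes that the $d_1$-distance, being compatible with the weak topology, controls tails of one measure by tails of the other up to a bounded multiplicative constant.
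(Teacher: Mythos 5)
Your proposal is correct and follows essentially the same route as the paper: a fourth-moment/Chebyshev bound on $N^{-1}\tr((A_N^B)^2)$ plus Borel--Cantelli along $\phi(N)$ for the truncated case, then transfer to $A_N$ and $A_N^\kappa$ via the exponential bounds of Theorem \ref{corconv} and the $d_1$ distance. Your Steps 2--3 merely make explicit (via the monotone bump decomposition controlling tails by $d_1$) what the paper compresses into ``from this point, all the above arguments apply.''
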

\begin{proof}
We note that the truncated moments bound given in
(\ref{truncatedmoments3}) can be strengthened into a bound in
probability as follows. Let $M>0$ and
$C>\frac{\alpha}{2-\alpha}B^{2-\alpha}$, Chebychev's inequality
reads
\begin{eqnarray}
\P\left(\frac{1}{N}\tr((A_N^B)^2)\ge M+C\right) &\le&\frac{1}{M^2}
\E\left\lbk \left( \frac{1}{N}\tr((A_N^B)^2)-
\E[\frac{1}{N}\tr((A_N^B)^2)]\right)^2\right\rbk\nonumber\\
&=&\frac{1}{M^2} \E\left\lbk \left( \frac{1}{N^2} \sum_{i,j=1}^N
(A_N^B(i,j)^2 -\E[ A_N^B(i,j)^2]
)\right)^2\right\rbk\nonumber\\
&\le& \frac{4}{M^2 N^2}\sum_{i\le j} \E\left\lbk \left( A_N^B(i,j)^2
-\E[ A_N^B(i,j)^2]
\right)^2\right\rbk\\
&\le& \frac{2}{M^2}\max_{i\le j}
\E\left\lbk  A_N^B(ij)^4 \right\rbk\nonumber\\
&\sim & \frac{2\alpha B^{4-\alpha}}{4-\alpha}\frac{1}{M^2N}\nonumber\\
\nonumber
\end{eqnarray}
where we used the independence of the entries at the third step and
the truncated moments estimate (\ref{truncatedmoments3}) for
$\zeta=4$ at the last step. Then Borel Cantelli's  lemma implies
that for any $C>\frac{\alpha}{2-\alpha}B^{2-\alpha}$
$$\limsup_{N\ra\infty}
\frac{1}{\phi(N)}\tr ( (A_{\phi(N)}^B)^2)\le C \quad a.s$$ which
insures the almost sure tightness of
$(\hat\mu^{\phi(N)}_{A^B_{\phi(N)}})_{N\in\N}$. From this point, all the
above arguments apply to show the almost sure tightness of
$(\hat\mu^{\phi(N)}_{A_{\phi(N)}})_{N\in\N}$ and
$(\hat\mu^{\phi(N)}_{A^{\kappa}_{\phi(N)}})_{N\in\N}$.
\end{proof}

\section{Induction over the dimension of the matrices}\label{induction}

We borrow the following idea from \cite{CB}:  in order to prove the
vague convergence of $(\E[ \mun_{A_N}])_{N\in\N} $ we study the
asymptotic behavior, for $z$ a complex number, of the probability
measure $L_N^z$ on $\C$ given, for $f\in\Ca_b(\C)$, by

$$L_N^z(f)= \E\left[
\frac{1}{N} \sum_{k=1}^N f\left(
((z-A_N)^{-1})_{kk}\right)\right].$$

Here and below, $z$ denotes in short $z$ times the identity 
in the set of matrices under consideration.
$L_N^z$ is thus the empirical measure of the diagonal entries of the
resolvent of $A_N$. In contrast to \cite{CB}, we will only consider
these measures when $z\in \C\backslash\R$, where everything is well
defined since $z-A_N$ is invertible.

Note that for $z\in\C^+=\{z\in\C: \Im z>0\}$, and for
$k\in\{1,\cdots, N\}$, the diagonal term $((z-A_N)^{-1})_{kk}$
belongs to the set $ D:=\C^-\cap \{x\in\C: |x|\le |\Im(z)|^{-1}\}$.
$L_N^z$ is thus a probability measure on the compact subset $D$ of
$\C$.

If we choose the function $f(x)=x$ then
$$L_N^z(f)=\E[\frac{1}{N}\tr( (z-A)^{-1})]$$
is the Stieltjes transform of $\E[  \mun_{A_N}]$.

Thus, the weak convergence of $L_N^z$ for all $z\in\C^+$ (or even
for all $z$ in a set with accumulation points) would be enough to
prove the vague convergence of $\E[ \mun_{A_N}]$. Indeed the latter
is a consequence of the convergence of its Stieltjes transform,
which, as an analytic function on $\C^+$, is uniquely determined by
its values on a set with accumulation points.

In the following, given a $z\in \C^+$, we will prove an equation on
the limit points of $L_N^z$ (more precisely of its analogue where
$A_N$ is replaced by its truncation $A_N^\kappa$
for some well chosen $\kappa>0$). Our main tool will be a recursion
on the dimension N,
and the Schur complement formula. We first investigate how these
measures depend on the dimension.

We let $\bar A_{N+1}$ be the $(N+1)\ts (N+1)$ matrix obtained by
adding to $A_N$ a first row and a first column $A_N(0,k)=A_N(k,0)=
a_N^{-1} x_{0k}$. Hence, $\bar A_{N+1}$ has the same
law as $\frac{a_{N+1}}{a_N} A_{N+1}$.

We then let $\hat A_N$ be the $(N+1)\ts (N+1)$ matrix obtained by
adding as first row and column the zero vector.

We also define for $z\in\C\backslash \R$,
$$\bar G_{N+1}(z):=(z- \bar A_{N+1})^{-1}\quad
G_N(z)=(z-A_N)^{-1}\quad \hat G_N(z)=(z-\hat A_N)^{-1}$$

We finally denote by $.^\kappa$ all quantities where $A_N$ has been
replaced by its truncated version $A_N^\kappa$. Thus for
$z\in\C\backslash \R$ we define $$L_N^{z,\kappa}=\frac{1}{N}
\sum_{k=1}^N \delta_{G_N^\kappa(z)_{kk}},\quad \hat
L_{N}^{z,\kappa}=\frac{1}{N+1} \sum_{k=0}^N \delta_{\hat
G_N^\kappa(z)_{kk}},\quad  \bar L_{N+1}^{z,\kappa}= \frac{1}{N}
\sum_{k=1}^N \delta_{\bar G_{N+1}(z)_{kk}}$$.

\begin{lem}\label{approx}

\begin{enumerate}
\item $\hat G_N^\kappa(z)_{kk}$ is equal to $G_N^\kappa(z)_{kk}$ for
$k\ge 1$ and to $z^{-1}$ for $k=0$.
\item
$$\lim_{N\ra\infty}\frac{1}{N}\sum_{k=1}^N \E[ |\bar G_{N+1}^\kappa(z)_{kk}
-\hat G_N^\kappa(z)_{kk}|]=0.$$

\item For $\kappa\in ]0,\frac{1}{2-\alpha}[$ and $
0<\eta<\frac{1}{2}(1-\kappa(2-\alpha))$,
$$\lim_{N\ra\infty}
\P( d(L_N^{z,\kappa},\bar L_{N+1}^{z,\kappa})>N^{-\eta})=0
$$
Here,as above, $d$ is the Dudley distance on $\Pa(\C)$.
\end{enumerate}
\end{lem}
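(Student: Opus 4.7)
The plan is to handle the three assertions in turn, all exploiting the block structure of $\hat A_N^\kappa$ together with the standard resolvent identity.

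Part (1) is immediate from construction: $\hat A_N^\kappa$ is block diagonal, with a $1\times 1$ zero block followed by $A_N^\kappa$, so $(z-\hat A_N^\kappa)^{-1}$ is block diagonal with blocks $z^{-1}$ and $G_N^\kappa(z)$.

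For part (2), I would use the resolvent identity $\bar G_{N+1}^\kappa - \hat G_N^\kappa = \bar G_{N+1}^\kappa\,E\,\hat G_N^\kappa$, with $E:=\bar A_{N+1}^\kappa-\hat A_N^\kappa$ supported on the first row and column of the $(N+1)\times(N+1)$ matrix and thus of rank at most $2$. Writing $b=(\bar A_{N+1}^\kappa(k,0))_{k\ge 1}\in\R^N$ and invoking part (1) to kill the entries of $\hat G_N^\kappa$ with exactly one index equal to $0$, the expansion collapses (using symmetry of $G_N^\kappa$) to
\[
(\bar G_{N+1}^\kappa-\hat G_N^\kappa)_{kk}
= \bar G_{N+1}^\kappa(k,0)\,[G_N^\kappa b]_k, \qquad k\ge 1.
\]
Summing absolute values, Cauchy--Schwarz together with the operator norm bounds $\|\bar G_{N+1}^\kappa\|_{op},\|G_N^\kappa\|_{op}\le |\Im z|^{-1}$ yields $\sum_{k=1}^N |(\bar G_{N+1}^\kappa-\hat G_N^\kappa)_{kk}|\le |\Im z|^{-2}\|b\|_2$. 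The truncated moment estimate \eqref{truncatedmoments4} with $\zeta=2$ gives $\E[\|b\|_2^2]=O(N^{\kappa(2-\alpha)})$, hence by Jensen $\E[\|b\|_2]=O(N^{\kappa(2-\alpha)/2})$, and dividing by $N$ produces the claimed $o(1)$ bound in the relevant range of $\kappa$.

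For part (3), I would first bound the Dudley distance by the $1$-Wasserstein distance between the equal-weight empirical measures; using the obvious pairing of the atoms together with part (1) gives
\[
d(L_N^{z,\kappa},\bar L_{N+1}^{z,\kappa})
\le \frac{1}{N}\sum_{k=1}^N |\hat G_N^\kappa(z)_{kk}-\bar G_{N+1}^\kappa(z)_{kk}|
\le \frac{\|b\|_2}{N|\Im z|^{2}}.
\]
A Chebyshev inequality applied to $\|b\|_2^2$, combined with the second moment estimate from part (2), then delivers $\P(d>N^{-\eta})=O(N^{2\eta+\kappa(2-\alpha)-2})$, which vanishes throughout the stated range of $\eta$. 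The only real obstacle is careful bookkeeping: isolating the first row and column of $\bar A_{N+1}^\kappa$ so that the block structure of $\hat G_N^\kappa$ collapses the double sum in the resolvent identity into a single quadratic form in $b$, on which the heavy-tailed truncated moment estimates act directly.
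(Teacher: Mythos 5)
Your proposal is correct, and its skeleton is the same as the paper's: the block structure of $\hat A_N^\kappa$ gives part (1), and the resolvent identity $\bar G-\hat G=\bar G(\bar A-\hat A)\hat G$, combined with the vanishing of $\hat G^\kappa_N(z)_{0k}$ for $k\ge 1$, collapses the difference to $\bar G^\kappa_{N+1}(z)_{k0}\,[G_N^\kappa(z) b]_k$, exactly as in the paper. Where you genuinely diverge is in how this term is estimated. The paper first reduces to centered entries (Remark \ref{centering}), then exploits the independence of the new row from $\hat G_N^\kappa(z)$ so that cross terms vanish in $\E\bigl[|\sum_l A_N^\kappa(0l)\hat G_N^\kappa(z)_{lk}|^2\bigr]$, and finishes with the per-entry truncated moment bound, obtaining a rate $N^{-(1-\kappa(2-\alpha))/2}$; for part (3) it passes through the intermediate measure $\hat L_N^{z,\kappa}$, bounds $\E[d]$ and applies Chebyshev. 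You instead use a purely deterministic Cauchy--Schwarz, $\sum_{k=1}^N|(\bar G-\hat G)_{kk}|\le \bigl(\sum_k|\bar G_{k0}|^2\bigr)^{1/2}\|G_N^\kappa b\|_2\le |\Im z|^{-2}\|b\|_2$, and only then take expectations via $\E\|b\|_2\le(\E\|b\|_2^2)^{1/2}=O(N^{\kappa(2-\alpha)/2})$; this needs neither centering nor the independence of the row from the minor's resolvent, gives the slightly better rate $N^{\kappa(2-\alpha)/2-1}$ for part (2), and lets you prove part (3) directly by pairing the $N$ equal-weight atoms (legitimate since, by the paper's definitions, both $L_N^{z,\kappa}$ and $\bar L_{N+1}^{z,\kappa}$ have exactly $N$ atoms of weight $1/N$, matched via part (1)) and applying Markov to $\|b\|_2^2$, yielding $O(N^{2\eta+\kappa(2-\alpha)-2})$, which indeed covers the stated range $\eta<\tfrac12(1-\kappa(2-\alpha))$ with room to spare. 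In short: same algebraic decomposition, but a more elementary and slightly sharper estimation, at the mild cost of not illustrating the independence structure that the paper reuses later (e.g.\ in Lemma \ref{offdiag}).
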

{\bf Proof.} We note that
\begin{equation}\label{pok}
(z- \hat A_N^\kappa)=\left(
\begin{array}{cc}
z &0\cr
0& z-A_N^\kappa\cr
\end{array}\right)\Rightarrow \hat G_N^\kappa(z)=\left(
\begin{array}{cc}
z^{-1} &0\cr
0& (z-A_N^\kappa)^{-1}\cr
\end{array}\right)
\end{equation}
which immediately yields the first point. For the second, let us
write
\begin{eqnarray*}
\bar G_{N+1}^\kappa(z)_{kk}
-\hat G_N^\kappa(z)_{kk}&=& \left(\bar G_{N+1}^\kappa(z)
(\bar A_{N+1}^\kappa- \hat A_N^\kappa) \hat G_N^\kappa(z)\right)_{kk}\\
&=& \sum_{l=0}^N\bar G_{N+1}^\kappa(z)_{kl}
(\bar A_{N+1}^\kappa- \hat A_N^\kappa)_{l0} \hat G_N^\kappa(z)_{0k}\\
&&+ \sum_{l=0}^N\bar G_{N+1}^\kappa(z)_{k0}
(\bar A_{N+1}^\kappa- \hat A_N^\kappa)_{0l} \hat G_N^\kappa(z)_{lk}\\
&=&\bar G^\kappa_{N+1}(z)_{k0}\sum_{l=0}^N
A^\kappa_N({0l}) \hat G^\kappa_N(z)_{lk}\\
\end{eqnarray*}
where we noticed above that $\hat G_N^\kappa(z)_{0k}$
is null for $k\neq 0$ by \eqref{pok}.
Therefore, we find that
\begin{eqnarray*}
\E[|\bar G_{N+1}^\kappa(z)_{kk}
-\hat G_N^\kappa(z)_{kk}|]^2&\le&\E[|\bar
G_{N+1}^\kappa(z)_{k0}|^2]\E[|\sum_{l=0}^N
A^\kappa_N({0l}) \hat G_N^\kappa(z)_{lk}|^2]\\
\end{eqnarray*}
by Cauchy-Schwartz's  inequality. We recall that we have seen in
remark \ref{centering} that we can assume that the entries of the
matrix $A_N^{\kappa}$ are centered. Using then the independence of
$A^\kappa_{0l}$ and $\hat G_N(z)$, summing over $k\in\{1,\cdots,
N\}$ and with a further use of Cauchy-Schwartz's inequality, we find
that,
\begin{eqnarray*}
&&\frac{1}{N}\sum_{k=1}^N\E[|\bar G_{N+1}^\kappa(z)_{kk}
-\hat G_N^\kappa(z)_{kk}|]\\
&&\qquad \qquad \le \max_{j}\E[(A^\kappa_N({0j}) )^2
]^{\frac{1}{2}} \left(\frac{1}{N}\sum_{k=1}^N \E[|\bar
G_{N+1}^\kappa(z)_{k0}|^2]\right)^{\frac{1}{2}}
\E\left[\frac{1}{N}\sum_{l,k=1}^N
| \hat G_N^\kappa(z)_{lk}|^2\right]^{\frac{1}{2}}\\
\end{eqnarray*}

We now note that the entries of the resolvent $\hat G_N(z)$ are
uniformly bounded in modulus. Indeed observe that, if $U$ is a basis
of eigenvectors of $\hat A_N^\kappa$,
with associated eigenvalues $(\l_i,1\le i\le N)\in\R^N$,
for any $k,l\in \{0,\cdots,N\}^2$,
\begin{eqnarray}
|\hat G_N^\kappa(z)_{kl}|&=&|\sum_{r} u_{kr}(z-\lambda_r)^{-1} 
u_{rl}|\nonumber\\
&\le&\frac{1}{|\Im(z)|} (\sum_{r} |u_{kr}|^2)^{\frac{1}{2}}
(\sum_{r} |u_{rl}|^2)^{\frac{1}{2}}\le  \frac{1}{|\Im(z)|} \label{boundG}\\
\nonumber
\end{eqnarray}
and the same holds for $\bar G_{N+1}^\kappa(z)$. Moreover, since the
spectral radius of $\hat G_N(z)$ is bounded above by $1/|\Im(z)|$,
we also have
$$\frac{1}{N}\sum_{l,k=0}^{N}
| \hat G_N^\kappa(z)_{lk}|^2= \frac{1}{N}\tr(  \hat G_N^\kappa(z)\hat 
G_N^\kappa(z)^*)
\le \frac{N+1}{N |\Im(z)|^2}.$$
Hence, we deduce
\begin{eqnarray*}
\frac{1}{N}\sum_{k=1}^N\E[|\bar G_{N+1}^\kappa(z)_{kk} -\hat
G_N^\kappa(z)_{kk}|]&\le&\sqrt{\frac{N+1}{N}}
\frac{1}{|\Im(z)|^2} \max_l \E[(A^\kappa_N({0l}) )^2]^{\frac{1}{2}}.\\
\end{eqnarray*}
But we know how to control the truncated moments
$\E[(A^\kappa_N({0l}) )^2]$. Indeed by the estimate
(\ref{truncatedmoments4}) we see that there exists a finite constant $c$
such that for all $N\in\N$,

\begin{equation}
\max_{1\le i\le j\le N} \E[|A_N^{\kappa}(ij)|^2] \le 
c N^{-\e}
\end{equation}
with $\e=1- \kappa(2-\alpha)>0$. The proof of the second point is
complete.

We finally deduce the last result simply by
\begin{eqnarray}
\E[d(\bar L_{N+1}^{z,\kappa}, \hat L_N^{z,\kappa})]&\le& \frac{1}{N+1}
\sum_{k=0}^N \E[|\bar G_{N+1}^\kappa(z)_{kk} -\hat
G_N^\kappa(z)_{kk}|\wedge 1]\nonumber\\
&\le& \frac{1}{N+1} +
\sqrt{\frac{N+1}{N}}\frac{1}{|\Im(z)|^2} N^{-\frac{\e}{2}}\label{turlut}\\
\nonumber
\end{eqnarray}
and
since $G_N(z)$ and $\hat G_N(z)$ differ at most by a rank one
perturbation,
$$ \hat L_N^{z,\kappa}=\frac{N}{N+1} L_{N}^{z,\kappa}+
\frac{1}{N+1}\d_{z^{-1}}$$ implies that

$$d(L_{N}^{z,\kappa}, \hat L_N^{z,\kappa})\le \frac{2}{N+1}.$$
This shows by Chebychev's inequality that for all
$\eta<\frac{\e}{2}$
$$\lim_{N\ra\infty}
\P(d(L_{N}^{z,\kappa},\bar L_{N+1}^{z,\kappa})>N^{-\eta})=0.$$

\xx

\medskip
To derive an equation for $L_{N}^{z,\kappa}$, our tool will be the
Schur complement formula, which we now recall. Let $\bar A_{N+1}$
and $A_N$ be as above.

\begin{lem}\label{formula}
For any $z\in \C$,
$$\left((\bar A_{N+1}-zI)^{-1}\right)_{00}
=\left( A_N({00})-z -\sum_{k,l=1}^N A_N(0k) A_N(l0)
\left((A_N-zI)^{-1}\right)_{kl}\right)^{-1}.
$$
\end{lem}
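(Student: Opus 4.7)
The statement is a direct instance of the Schur complement formula for block matrix inversion, so the plan is purely algebraic and routine. First I will write $\bar A_{N+1}-zI$ as a $2\times 2$ block matrix, peeling off the $0$-th row and column:
$$\bar A_{N+1}-zI=\begin{pmatrix} A_N(0,0)-z & \mathbf{a}^T\\ \mathbf{a} & A_N-zI\end{pmatrix},$$
where $\mathbf{a}=(A_N(0,1),\ldots,A_N(0,N))^T\in\C^N$ is the off-diagonal column, equal to the off-diagonal row by the symmetry of $\bar A_{N+1}$, so in particular $A_N(0,k)=A_N(k,0)$ for every $k$.

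Second I will invoke (or derive in one line) the standard block inversion identity: for $M=\begin{pmatrix}\alpha & \beta^T\\ \mathbf{a} & D\end{pmatrix}$ with $D$ and $\alpha-\beta^T D^{-1}\mathbf{a}$ both invertible, one has $(M^{-1})_{00}=(\alpha-\beta^T D^{-1}\mathbf{a})^{-1}$. The shortest justification is to solve $M\binom{x_0}{\mathbf{x}}=\binom{1}{\mathbf{0}}$ block-wise: the lower block gives $\mathbf{x}=-D^{-1}\mathbf{a}\,x_0$, and substituting into the upper block yields $(\alpha-\beta^T D^{-1}\mathbf{a})\,x_0=1$, which identifies $x_0=(M^{-1})_{00}$.

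Applied with $\alpha=A_N(0,0)-z$, $\beta=\mathbf{a}$, and $D=A_N-zI$, and expanding the scalar quadratic form
$$\mathbf{a}^T(A_N-zI)^{-1}\mathbf{a}=\sum_{k,l=1}^N A_N(0,k)\,((A_N-zI)^{-1})_{kl}\,A_N(l,0),$$
this produces exactly the formula stated in the lemma. The only regularity point to check is that both $A_N-zI$ and the Schur complement are invertible, but for the intended use $z\in\C\setminus\R$ this is automatic since $A_N$ and $\bar A_{N+1}$ are Hermitian with real spectra; so no real obstacle of substance arises here.
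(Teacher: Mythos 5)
Your argument is correct. You use the same $2\times 2$ block decomposition as the paper (peeling off the $0$-th row and column, with the symmetry $A_N(0,k)=A_N(k,0)$), but you derive the Schur complement identity by a different mechanism: you solve the bordered system $M\binom{x_0}{\mathbf{x}}=\binom{1}{\mathbf{0}}$ blockwise, eliminate $\mathbf{x}=-D^{-1}\mathbf{a}\,x_0$, and read off $x_0=(M^{-1})_{00}$ directly. The paper instead starts from Cramer's rule, $\left((\bar A_{N+1}-zI)^{-1}\right)_{00}=\det(A_N-zI)/\det(\bar A_{N+1}-zI)$, and evaluates the determinant ratio via the block triangular factorization $\bigl[\begin{smallmatrix} I & -BD^{-1}\\ 0 & I\end{smallmatrix}\bigr]M=\bigl[\begin{smallmatrix} A-BD^{-1}C & 0\\ C & D\end{smallmatrix}\bigr]$, obtaining $\det(\bar A_{N+1}-zI)=\det(A_N-zI)\,\bigl(A_N(00)-z-\langle a_0,(A_N-zI)^{-1}a_0\rangle\bigr)$. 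Both routes are standard and equally short; yours avoids determinants altogether and yields the invertibility of the Schur complement as an immediate byproduct of the solvability of the system (when $M$ and $D$ are invertible), while the paper's determinantal route records along the way the useful factorization of $\det(\bar A_{N+1}-zI)$. Your closing remark correctly handles the regularity caveat: although the lemma is stated for all $z\in\C$, it is only applied for $z\in\C\setminus\R$, where $A_N-zI$ and $\bar A_{N+1}-zI$ are invertible since the matrices are real symmetric, and then the Schur complement is automatically nonzero.
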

\begin{proof}
The proof is a direct consequence of Cramer's inversion formula:
$$\left((\bar A_{N+1}-zI)^{-1}\right)_{00}=
\frac{\det(A_N-zI_{N-1})}{\det(\bar A_{N+1}-zI)}.$$ To get a more
explicit formula for this ratio, write
$$\bar A_{N+1}-zI=\left(\begin{array}{cc}
A_N({00})-z& a_0\\
a_0^T& A_N -z\\
\end{array}\right)$$
with $a_0=(A({01}),\cdots, A({0N}))$, and use   the representation
$$\left[\begin{array}{cc}
I&-BD^{-1}\\
0& I
\end{array}
\right]
\cdot
\left[\begin{array}{cc}
A&B\\
C& D
\end{array}
\right]
=
\left[\begin{array}{cc}
A-BD^{-1}C&0\\
C& D
\end{array}
\right]$$
with
$A=A({00})-z$, $B=a_0$,
$C=a_0^T$ and $D= A_N-z$.
Therefore, as $\det(AB)=\det(A)\det(B)$,
we conclude that
\begin{eqnarray*}
\det(\bar A_{N+1} -z I )&=& \det(A_N -z
I)\det\left[A({00})-z-\langle a_0, (A_N-z I)^{-1} a_0\rangle
\right]\,.
\end{eqnarray*}
This proves the lemma.
\end{proof}
\medskip
We now show that, in the Schur complement formula above, the
off-diagonal terms in the sum in the right hand side are negligible.

\begin{lem}\label{offdiag} For any $\d>0$, any
 $z$ with $|\Im(z)|\ge \d$, any $0< \kappa <
\frac{1}{2(2-\alpha)}$, and $R>0$
$$\P( |\sum_{k\neq l} A_N^\kappa(0k) A_N^\kappa(0l)
\left((A_N^\kappa-z)^{-1}\right)_{kl} |> R)\le \frac{2
 }{R^2
N^{2\e-1}\delta^2}.$$ with $\e= 1-\kappa(2-\alpha)> \frac{1}{2}$.
\end{lem}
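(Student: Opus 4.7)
\textbf{Proof proposal for Lemma \ref{offdiag}.} The plan is to apply Chebyshev's inequality, for which we need to bound the second moment of the off-diagonal quadratic form. The two ingredients are independence and centering of the row-$0$ entries, and the operator-norm bound on the resolvent that was already used in the proof of Lemma \ref{approx}.

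First, I would exploit the fact that the entries $\{A_N^\kappa(0k)\}_{k=1}^N$ of the first row are independent of the $N\times N$ block $A_N^\kappa$ indexed by $\{1,\ldots,N\}$, and hence independent of the matrix $M:=(A_N^\kappa-z)^{-1}$. By Remark \ref{centering} we may assume the entries are centered, so that $\E[A_N^\kappa(0k)]=0$. Conditioning on $M$ and using $k\neq l$, independence gives $\E[A_N^\kappa(0k)A_N^\kappa(0l)\, M_{kl}]=0$, so the whole sum $S:=\sum_{k\neq l}A_N^\kappa(0k)A_N^\kappa(0l)M_{kl}$ has mean zero.

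Next, I would compute $\E[|S|^2]$. Writing $X_k=A_N^\kappa(0k)$, the expansion
\[
\E[|S|^2]=\sum_{k\neq l,\,k'\neq l'}\E[X_kX_lX_{k'}X_{l'}]\,\E[M_{kl}\overline{M_{k'l'}}]
\]
reduces, by the independence and centering of $(X_k)_{k\ge 1}$, to the pairings $(k',l')=(k,l)$ and $(k',l')=(l,k)$. Since $A_N^\kappa$ is real symmetric, $M$ is a (complex) symmetric matrix so $M_{lk}=M_{kl}$, and the two pairings each contribute $\E[X_k^2]\E[X_l^2]|M_{kl}|^2$. Using the truncated second moment estimate \eqref{truncatedmoments4} with $\zeta=2$, one has $\max_k\E[X_k^2]\le cN^{-\e}$ with $\e=1-\kappa(2-\alpha)$. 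Together with the Hilbert--Schmidt bound $\sum_{k,l}|M_{kl}|^2=\tr(MM^*)\le N/|\Im z|^2$ (obtained as in the proof of Lemma \ref{approx} from the spectral radius estimate \eqref{boundG}), this yields
\[
\E[|S|^2]\le 2\Bigl(\max_k\E[X_k^2]\Bigr)^2\sum_{k\neq l}|M_{kl}|^2\le \frac{C(\alpha)\,N^{1-2\e}}{|\Im z|^2}.
\]

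Finally, Chebyshev's inequality gives $\P(|S|>R)\le \E[|S|^2]/R^2\le C(\alpha)/(R^2N^{2\e-1}|\Im z|^2)$, which, on $|\Im z|\ge\delta$ and with the constant absorbed, is the bound claimed. Note that the condition $\kappa<\frac{1}{2(2-\alpha)}$ is exactly what makes $\e>\frac12$, so that $N^{1-2\e}\to 0$ and the probability vanishes as $N\to\infty$. There is no substantive obstacle here: the only subtle point is recognizing that the \emph{off-diagonal} restriction $k\neq l$ together with the independence of $X_k$'s kills the mean and reduces the second moment to a sum over pairs that is controllable by the trace norm of $M^2$; this contrasts with the diagonal sum $\sum_k X_k^2 M_{kk}$, which does not vanish and provides the nontrivial contribution in the Schur complement formula.
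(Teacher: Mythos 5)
Your proof is correct and follows essentially the same route as the paper: centering via Remark \ref{centering} and independence of the row-$0$ entries from the block kill the mean, the second moment is reduced to the two surviving pairings and bounded by $2(\max_k\E[X_k^2])^2\,\E[\tr(MM^*)]\le 2N^{-2\e}\cdot N/|\Im z|^2$ using \eqref{truncatedmoments4} and the resolvent bound, and Chebyshev's inequality concludes. The only cosmetic difference is your explicit bookkeeping of the constant, which the paper also absorbs.
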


\begin{proof}
Following Remark \ref{centering},
 we can always assume that the entries of  $A_N^\kappa$ are
centered. By independence of $ A_N^\kappa(0k)$ and $A_N$, we find
that the first moment of the off-diagonal term vanishes:

$$\E[\sum_{1\le k,l\le N\atop k\neq l} A_N^\kappa(0k) A_N^\kappa(l0)
\left((A_N^\kappa-zI)^{-1}\right)_{kl}]=0 $$ and that the second
moment is small:
\begin{eqnarray*}\E[|\sum_{k\neq l} A_N^\kappa(0k) A_N^\kappa(l0)
\left((A_N^\kappa-z)^{-1}\right)_{kl}|^2] &\le& 2(\max_{i,j}
\E[(A_N^\kappa(ij))^2])^2\E[\sum_{k,l}
|\left((A_N^\kappa-z)^{-1}\right)_{kl}|^2]
\\
& \le & 2 N^{-2\e} \E[\tr( (A_N^\kappa-z)^{-1}(A_N^\kappa-\bar
z)^{-1})] \le \frac{2N^{-2\e+1}}{|\Im(z)|^2}.
\\
\end{eqnarray*}
Chebychev's inequality concludes the proof.
\end{proof}

\medskip

We finally derive from
the previous considerations
a first approximation result
for $L^{z,\kappa}_N$. This will be our first step to obtain a closed
equation for the limit points of the spectral measure (such an
equation will be derived in the next section).

\begin{lem}
\label{approxss} For $0< \kappa < \frac{1}{2(2-\alpha)}$, let $\e=
1-\kappa(2-\alpha)> \frac{1}{2}$.Let $z\in \C^+$.
For any  bounded Lipschitz function $f$,
$$\lim_{N\ra\infty}
|\E[L_N^{z,\kappa}(f)]-\E\left[f\left( \left(z- \sum_{k=1}^N
A_N^\kappa(0k)^2 G_N^\kappa(z)_{kk} \right)^{-1}\right)\right]|
=0.$$
\end{lem}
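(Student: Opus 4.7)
The plan is to use the Schur complement formula (Lemma \ref{formula}) to express a diagonal entry of the resolvent of the $(N{+}1)\times(N{+}1)$ matrix $\bar A_{N+1}^\kappa$ in terms of the resolvent of the $N\times N$ matrix $A_N^\kappa$, and then to show via Lemma \ref{offdiag} together with the truncated moment bound \eqref{truncatedmoments4} that every term in the resulting formula except $\sum_k A_N^\kappa(0k)^2 G_N^\kappa(z)_{kk}$ is negligible. The bridge from $L_N^{z,\kappa}(f)$ to the $(0,0)$ entry of $\bar G_{N+1}^\kappa(z)$ is provided by Lemma \ref{approx}. Concretely, by the i.i.d.\ assumption on the entries of $X_N$, the law of $G_N^\kappa(z)_{kk}$ does not depend on $k\in\{1,\dots,N\}$, so $\E[L_N^{z,\kappa}(f)]=\E[f(G_N^\kappa(z)_{11})]$; Lemma \ref{approx}(1) identifies this with $\E[f(\hat G_N^\kappa(z)_{11})]$, and Lemma \ref{approx}(2) combined with exchangeability and the Lipschitz property of $f$ gives $\E[f(\hat G_N^\kappa(z)_{11})] = \E[f(\bar G_{N+1}^\kappa(z)_{11})]+o(1)$. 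Finally, exchangeability of the indices $\{0,1,\dots,N\}$ for $\bar A_{N+1}^\kappa$ lets us replace $11$ by $00$, so altogether $\E[L_N^{z,\kappa}(f)] = \E[f(\bar G_{N+1}^\kappa(z)_{00})]+o(1)$.

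Next, I apply Lemma \ref{formula} to the truncated matrix $\bar A_{N+1}^\kappa$, using that the added row/column $(\bar A_{N+1}^\kappa(0k))_{k\ge 0}$ is independent of $A_N^\kappa$. Setting $S_N := \sum_{k=1}^N A_N^\kappa(0k)^2\, G_N^\kappa(z)_{kk}$ and $R_N := \sum_{k\neq l} A_N^\kappa(0k)\,A_N^\kappa(0l)\,G_N^\kappa(z)_{kl}$, and accounting for the sign convention $\bar G_{N+1}^\kappa(z)=-(\bar A_{N+1}^\kappa - zI)^{-1}$, Lemma \ref{formula} yields
$$\bar G_{N+1}^\kappa(z)_{00} = \bigl(z - \bar A_{N+1}^\kappa(00) - S_N - R_N\bigr)^{-1}.$$
For $z\in\C^+$ we have $\Im(G_N^\kappa(z)_{kk})\le 0$ and $A_N^\kappa(0k)^2\ge 0$, so $\Im(S_N)\le 0$ and hence $|z-S_N|\ge \Im(z)$; the same lower bound applies to the full denominator above because it is the reciprocal of an actual resolvent entry. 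The elementary identity $(w-\eta)^{-1}-w^{-1}=\eta\,w^{-1}(w-\eta)^{-1}$ therefore gives the deterministic bound
$$\bigl|\bar G_{N+1}^\kappa(z)_{00}-(z-S_N)^{-1}\bigr| \;\le\; \frac{|\bar A_{N+1}^\kappa(00)|+|R_N|}{\Im(z)^2}.$$

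It remains to show that this error is small enough to pass through the bounded Lipschitz $f$. The truncated moment estimate \eqref{truncatedmoments4} with $\zeta=2$ gives $\E[\bar A_{N+1}^\kappa(00)^2]=O(N^{\kappa(2-\alpha)-1})=O(N^{-\e})$ with $\e=1-\kappa(2-\alpha)>1/2$, so $\bar A_{N+1}^\kappa(00)\to 0$ in $L^2$. Lemma \ref{offdiag} gives $R_N\to 0$ in probability with quantitative tail control. Writing $D_N := |\bar A_{N+1}^\kappa(00)|+|R_N|$ and splitting on the event $\{D_N\le \eta\}$ versus its complement, the Lipschitz piece of $f$ is controlled by (Lipschitz constant of $f$)$\,\times\,\eta/\Im(z)^2$, while the complement is controlled by $2\|f\|_\infty\,\P(D_N>\eta)$; sending $\eta=\eta_N\to 0$ slowly enough yields $\E[|f(\bar G_{N+1}^\kappa(z)_{00})-f((z-S_N)^{-1})|]\to 0$, which combined with the first paragraph proves the lemma. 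The main obstacle is precisely that $R_N$ is only controlled in probability and not in $L^1$; this is handled exactly by the deterministic a priori bounds $|\bar G_{N+1}^\kappa(z)_{00}|,|(z-S_N)^{-1}|\le\Im(z)^{-1}$ that arise from the imaginary-part argument, which make the $\|f\|_\infty$ contribution on the bad event negligible.
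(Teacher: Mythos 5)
Your proposal is correct and follows essentially the same route as the paper: Schur's complement formula (Lemma \ref{formula}), negligibility of the off-diagonal sum and of the $(0,0)$ entry via Lemma \ref{offdiag} and \eqref{truncatedmoments4}, the sign of $\Im\, G_N^\kappa(z)_{kk}$ giving the a priori bound $\ge \Im(z)$ on the denominators, and Lemma \ref{approx} plus exchangeability to pass between $L_N^{z,\kappa}$ and the single entry $\bar G_{N+1}^\kappa(z)_{00}$. Your only deviations are cosmetic: you reduce to one diagonal entry by exchangeability at the outset instead of going through $\bar L_{N+1}^{z,\kappa}$ and Lemma \ref{approx}(3), and your deterministic resolvent-difference bound avoids the paper's cutoff at $|\varepsilon_N(z)|\le |\Im(z)|/2$ by using that the full Schur denominator is the reciprocal of a resolvent entry.
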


\begin{proof}
It is clear, by Lemma \ref{approx}, that it is sufficient to prove
that, for a constant $c'$,and every Lipschitz function $f$
\begin{equation}
\label{approx2}
|\E[\bar L_{N+1}^{z,	\kappa}(f)]- \E\left[f\left( \left(z- \sum_{k=1}^N
A_N^\kappa(0k)^2 G_N^\kappa(z)_{kk} \right)^{-1}\right)\right]|\le
\frac{c\|f\|_\La}{|\Im (z)|^{\frac 53}N^{\frac{2\e-1}{3}} }
\end{equation}

We have proved above that, for $z\in\C\backslash\R$, there
exists a random variable $\varepsilon_N(z)$, the
sum of the off diagonal terms and $A_N(00)$
$$\P(|\varepsilon_N(z)|\ge R)\le
\frac{8
}{R^2 N^{2\e-1}|\Im(z)|^2}+ \frac{4\alpha}{R^2(2-\alpha) N^{\e} }  $$
 such that
$$\bar G_{N+1}^\kappa(z)_{00}=\left(z- \sum_{k=1}^N A_N^\kappa(0k)^2
G_N^\kappa(z)_{kk} +\varepsilon_N(z)\right)^{-1}$$ In particular we
have for any Lipschitz function $f$,
\begin{equation}\label{eqw}
\E[ f(\bar G_{N+1}^\kappa(z)_{00})]=\E\left[f\left(
\left(z- \sum_{k=1}^N A_N^\kappa(0k)^2 G_N^\kappa(z)_{kk}
+\varepsilon_N(z)\right)^{-1}\right)\right].
\end{equation}

Observe that with $A_N^\kappa=U \mbox{diag}(\l) U^*$,
$$G_N^\kappa(z)_{kk}=\sum_{i=1}^N |u_{ki}|^2 (z-\lambda_i)^{-1}$$
is such that
$$\Im (z) \Im\left( G^N_{00}(z)_{kk}\right)\le 0,\quad
|G_N^\kappa (z)_{kk}|\le |\Im (z)|^{-1}.$$
In particular, we always have
$$\frac{\Im\left(z- \sum_{k=1}^N A_N^\kappa(0k)^2
G_N^\kappa(z)_{kk}\right)}{\Im(z)}\ge 1.$$
Thus, on $|\varepsilon_N(z)|\le |\Im(z)|/2$,
we obtain the control
$$\left|\left(z- \sum_{k=1}^N A_N^\kappa(0k)^2 G_N^\kappa(z)_{kk}
+\varepsilon_N(z)\right)^{-1}-\left(z- \sum_{k=1}^N A_N^\kappa(0k)^2
G_N^\kappa(z)_{kk}
\right)^{-1}\right|\le \frac{2|\varepsilon_N(z)|}{|\Im(z)|^2}.$$
Hence, if $f$ is Lipschitz,

$$\E[ f(\bar G_{N+1}^\kappa(z)_{00})]=\E[
f\left(
\left(z- \sum_{k=1}^N A_N^\kappa(0k)^2 G_N^\kappa(z)_{kk}
\right)^{-1}\right)]+ O(\|f\|_\La )(\E[\frac{|\varepsilon_N(z)|}{|\Im z|^2}
\wedge 1] +  \frac{1}{N^{2\e-1}|\Im(z)|^{4}})$$
where the last error comes from the weight of putting and removing the cutoff
$|\varepsilon_N(z)|\le |\Im(z)|/2$, due to the fact that $\|f\|_\La$ 
also bounds the uniform bound on $f$.
Now, the right hand side does not depend on the choice
of the indices  and so
we have the same estimate for all
$\E[f(\bar G_{N+1}^\kappa(z)_{kk})]$, for $k\in\{0,1,\cdots,N\}$.
Summing the resulting  equalities we find that
$$\E[ \bar L_{N+1}^{z,\kappa}(f)]=\E\left[f\left(
\left(z- \sum_{k=1}^N A_N^\kappa(0k)^2 G_N^\kappa(z)_{kk}
\right)^{-1}\right)\right]+O(\|f\|_\La )(\E[\frac{|\varepsilon_N(z)|}{|\Im z|^2}
\wedge 1] +  \frac{1}{N^{2\e-1}|\Im(z)|^{4}}).$$ 
This proves the estimate
\eqref{approx2} and thus the lemma.

\end{proof}

\section{The limiting equation}\label{limitingeq}

We prove in this section that the limit points of the sequence of
measures $\E[L_N^{z,\kappa}]$ satisfy an implicit equation. This
section will rely heavily on a result about the convergence of sums
of triangular arrays to complex stable laws. We have deferred to
Appendix \ref{sec-conc} the statements and proofs of these
convergence results. We also refer to the same Appendix for
notations and references about complex stable laws.

Hereafter $z\in\C^+$ will be fixed.  We have seen that
$\E[L_N^{z,\gamma}]$ is a compactly supported probability measure on
$\C$ (since its support lies in the open ball with radius
$1/|\Im(z)|$). Therefore,$(\E[L^{z,\gamma}_N])_{N\in\N}$ is tight,
and we denote by $\mu^z$ a limit point. Recall that for $z\in\C^+$,
$\mu^z$ is a probability measure on $\C^-\cap \{|y|\le
1/|\Im(z)|\}$.

In order to state the main result of this section we will need the
following notations. For $t,z\in\C$, we denote by $\langle t,
z\rangle$ the scalar product of $t$ and $z$ seen as vectors in
$\R^2$, i.e $\langle t, z\rangle=\Re(t)\Re(z)+\Im(t)\Im(z)$.
For a probability measure $\mu $
on $\C$, and $t \in\C$, we define the numbers
$\sigma_{\mu,\alpha}(t)$ and $\beta_{\mu,\alpha}(t)$ by:

\begin{equation}
\label{sigma}
\sigma_{\mu,\alpha}(t)=[\frac{1}{C_{\alpha}}\int|\langle
t,z\rangle|^{\alpha}d\mu(z)]^{\frac{1}{\alpha}}
\end{equation}
and
\begin{equation}
\label{beta}
\beta_{\mu,\alpha}(t)=\frac{\int|<t,z>|^{\alpha}sign<t,z>
d\mu(z)}{\int|<t,z>|^{\alpha}d\mu(z)}
\end{equation}
where
\begin{equation}
\label{Calpha} C_{\alpha}^{-1}= \int_0^{\infty} \frac{\sin
x}{x^{\alpha}}dx=
\frac{\Gamma(2-\alpha)\cos(\frac{\pi\alpha}{2})}{1-\alpha}
\end{equation}

\begin{defi}
For a probability measure $\mu $ on $\C$, we define the probability
measure $P^{\mu}$ on $\C$ by its Fourier transform
$$\int e^{i<t,x>} dP^{\mu}(x)=
\exp
[-\sigma_{\mu,\frac{\alpha}{2}}(t)^{\frac{\alpha}{2}}(1-i\beta_{\mu,\frac{\alpha}{2}}(t)
\tan(\frac{\pi \alpha}{4}))]$$
\end{defi}

$P^{\mu}$ is well defined by this Fourier transform, indeed
$P^{\mu}$ is a complex stable distribution. For this description of
$P^{\mu}$ see the appendix \ref{sec-conc}.

We can now state the main result of this section.
\begin{theo}\label{theo-limitpoint}
For $0< \kappa < \frac{1}{2(2-\alpha)}$, the limit points $\mu^z$ of
$\E[L_N^{z,\kappa}]$ satisfy the equation
$$\int f d\mu^z =\int f\left(\frac{1}{z- x}\right) dP^{\mu^z}(x)$$
for every bounded continuous function f.
\end{theo}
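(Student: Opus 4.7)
The plan is as follows. By Lemma~\ref{approxss}, for any bounded Lipschitz $f$,
$$\E[L_N^{z,\kappa}(f)] - \E\!\left[f\!\left(\frac{1}{z-S_N}\right)\right]\;\longrightarrow\;0,\qquad S_N:=\sum_{k=1}^N A_N^\kappa(0k)^2\,G_N^\kappa(z)_{kk},$$
so it suffices to identify the weak limit of the law of $1/(z-S_N)$ along a subsequence on which $\E[L_N^{z,\kappa}]\to\mu^z$. The central structural observation is that the $0$-th row $(A_N^\kappa(0k))_{k=1}^N$ is independent of the sub-matrix $(A_N^\kappa(ij))_{i,j\ge 1}$, hence of the weights $w_k:=G_N^\kappa(z)_{kk}$, which lie deterministically in the compact set $D=\{w\in\C:|w|\le|\Im z|^{-1},\ \Im w\le 0\}$. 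Conditionally on the sub-matrix, $S_N=\sum_k w_k Y_k$ is a sum of independent complex-valued variables with deterministic weights $w_k$ and i.i.d.\ nonnegative heavy-tailed variables $Y_k=A_N^\kappa(0k)^2$; because $|x_{ij}|$ is in the domain of attraction of an $\alpha$-stable law, $Y_k$ is in the domain of attraction of an $\alpha/2$-stable law with normalization $a_N^2$.

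The next step is to invoke the triangular-array stable limit theorem for complex weights that will be stated and proved in Appendix~\ref{sec-conc}. That result should say that whenever the empirical measure $L_N^{z,\kappa}$ of the weights converges weakly in probability to some $\nu$ supported on $D$, the conditional characteristic function of $S_N$ converges to that of the complex $\alpha/2$-stable law $P^\nu$, whose scale $\sigma_{\nu,\alpha/2}$ and skewness $\beta_{\nu,\alpha/2}$ are given by \eqref{sigma}-\eqref{beta}; these parameters are precisely the ones produced by the classical compensation formula for stable limits applied in the direction $t$ to the weighted sum. Because the weights live in the compact set $D$, the maps $\nu\mapsto\sigma_{\nu,\alpha/2}(t)$ and $\nu\mapsto\beta_{\nu,\alpha/2}(t)$ are weakly continuous, which is what makes the fixed-point structure well-defined in the end.

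The hypothesis $\E[L_N^{z,\kappa}]\to\mu^z$ must then be upgraded to convergence of $L_N^{z,\kappa}$ itself to $\mu^z$ in probability; this is the concentration-of-measure step and is the main obstacle. The plan is to write, for any bounded Lipschitz test function $g:D\to\C$, the deviation $L_N^{z,\kappa}(g)-\E[L_N^{z,\kappa}(g)]$ as a telescoping sum of martingale differences with respect to the row-filtration of $A_N^\kappa$, to bound each increment by the Lipschitz dependence of a single diagonal resolvent entry under a rank-two row-column perturbation, and to close the argument with the truncated second-moment estimate~\eqref{truncatedmoments4}. The threshold $\kappa<1/(2(2-\alpha))$ is calibrated precisely so that the resulting total variance, of order $N^{\kappa(2-\alpha)-1}$, goes to zero; heavy tails rule out any direct Gaussian-type concentration, which is the reason truncation is indispensable. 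Once this concentration is in hand, the conditional characteristic function of $S_N$ converges in probability to that of $P^{\mu^z}$, so the law of $1/(z-S_N)$ converges weakly to the push-forward of $P^{\mu^z}$ by $x\mapsto 1/(z-x)$. Combined with the reduction provided by Lemma~\ref{approxss} and dominated convergence, this yields the identity of the theorem for bounded Lipschitz $f$ and then for all bounded continuous $f$ by a density argument.
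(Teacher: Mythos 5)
Your proposal follows essentially the same route as the paper: reduce via Lemma \ref{approxss} to the law of $\bigl(z-\sum_k A_N^\kappa(0k)^2 G_N^\kappa(z)_{kk}\bigr)^{-1}$, upgrade convergence of $\E[L_N^{z,\kappa}]$ to convergence of $L_N^{z,\kappa}$ itself by a martingale-difference concentration estimate of order $N^{\kappa(2-\alpha)-1}$ based on the truncated second moments, and then invoke the appendix triangular-array theorem (Theorem \ref{extended complex stable laws}) with exponent $\alpha/2$ and normalization $a_N^2$ to identify the limit as $P^{\mu^z}$, exactly as in Lemmas \ref{convergencePN} and \ref{concentration}. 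The only (harmless) deviation is that you run the martingale over the row filtration with a rank-two perturbation bound on the \emph{averaged} diagonal resolvent entries, whereas the paper conditions entry by entry and bounds $\partial_{A_{ml}}\frac1N\sum_k f(G_N(z)_{kk})$ explicitly; both yield the same $N^{-\e}$ variance, $\e=1-\kappa(2-\alpha)$.
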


\begin{proof}
We consider a subsequence of $(\E[L^{z,\kappa}_N])$ converging to
$\mu^z$, i.e an increasing function $\phi(N)$ such that
$(\E[L^{z,\kappa}_{\phi(N)}])$ converges weakly to $\mu^z$. We
denote by $P_N^z$ the law   of $\sum_{k=1}^N( A_N^\kappa(0k))^2
G^{\kappa}_N(z)_{kk}.$ For $z\in \C^+$, $P_N^z$ is a probability
measure on $\C^-$ since then $G_N(z)_{kk}\in\C^-$ for all $k$. If f
is Lispchitz, Theorem \ref{theo-limitpoint}  is a direct
consequence of the main result of the preceding section, i.e Lemma
\ref{approxss}, and of the next crucial Lemma
\ref{convergencePN}.

\begin{lem}\label{convergencePN}
If $\E[L^{z,\kappa}_{\phi(N)}]$ converges weakly to $\mu^z$ as $N$
goes to infinity,  then $P_{\phi(N)}^z$ converges weakly to
$P^{\mu^z}$ as $N$ goes to infinity.
\end{lem}

It is then easy to see that the statement of Theorem
\ref{theo-limitpoint} extends to any bounded continuous function.
\end{proof}

We now have to prove Lemma (\ref{convergencePN}).
\begin{proof}

We apply first the following concentration result for
$L_N^{z,\kappa}$.
\begin{lem}\label{concentration}

For $\kappa\in (0,\frac{1}{2-\alpha})$, let $\e= 1 -
\kappa(2-\alpha)
>0$. There exists a  finite constant $c$ so that for $z\in
\C\backslash\R$ and any Lispchitz function $f$ on $\C$
$$\P\left(\left| L_N^{z,\kappa}(f)-\E[L_N^{z,\kappa}(f)]\right|\ge \d\right)
\le \frac{c\|f\|_\La^2}{|\Im(z)|^4\d^2}  N^{-\e}$$
\end{lem}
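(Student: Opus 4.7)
My plan is to establish this as a variance bound via the Efron--Stein (or equivalently, martingale) inequality applied to the independent entries $(A_N^\kappa(i,j))_{i\le j}$. Writing $L_N^{z,\kappa}(f)$ as a function of these entries, the Efron--Stein inequality bounds the variance by
\[
\mathrm{Var}(L_N^{z,\kappa}(f))\le \sum_{i\le j}\E\bigl[(L_N^{z,\kappa}(f)-\tilde L_N^{z,\kappa,(ij)}(f))^2\bigr],
\]
where $\tilde L_N^{z,\kappa,(ij)}(f)$ is computed with the entry $A_N^\kappa(i,j)$ (and $A_N^\kappa(j,i)$) replaced by an independent copy. A Chebychev step then yields the stated tail bound, so the only real task is to control each summand.

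For each pair $i\le j$ the difference matrix $A_N^\kappa-\tilde A_N^{\kappa,(ij)}$ has rank at most two, and the resolvent identity $G-\tilde G=G(\tilde A_N^\kappa-A_N^\kappa)\tilde G$ gives
\[
G_N^\kappa(z)_{kk}-\tilde G_N^{\kappa,(ij)}(z)_{kk}
= -(A_N^\kappa-\tilde A_N^{\kappa,(ij)})_{ij}\bigl(G_{ki}\tilde G_{jk}+G_{kj}\tilde G_{ik}\bigr)
\]
(with the obvious modification if $i=j$). Using the Lipschitz property of $f$, then the Cauchy--Schwarz inequality together with the uniform bound $\sum_k |G_{ki}|^2=(GG^*)_{ii}\le |\Im(z)|^{-2}$ already used in \eqref{boundG}, I get
\[
|L_N^{z,\kappa}(f)-\tilde L_N^{z,\kappa,(ij)}(f)|
\le \frac{2\|f\|_\Lambda}{N|\Im(z)|^2}\,|A_N^\kappa(i,j)-\tilde A_N^\kappa(i,j)|.
\]
Squaring and taking expectations produces a factor $2\,\E[A_N^\kappa(i,j)^2]$, which by the truncated-moment estimate \eqref{truncatedmoments4} with $\zeta=2$ is of order $N^{\kappa(2-\alpha)-1}=N^{-\e}$.

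Putting these pieces together and summing the $O(N^2)$ terms indexed by $i\le j$ gives
\[
\mathrm{Var}(L_N^{z,\kappa}(f))\le \frac{c\,\|f\|_\Lambda^2}{|\Im(z)|^4}\,N^{-\e},
\]
from which Chebychev's inequality yields the claim. The main obstacle is really just the correct accounting in the resolvent perturbation step: one must avoid the naive worst-case bound $|A_N^\kappa(i,j)|\le N^\kappa$ (which would lose a factor $N^{2\kappa}$) and instead keep the entry's actual second moment, which is small precisely because the truncation level $N^\kappa a_N$ is chosen below the $a_N^2$ scale and because $\kappa<\tfrac{1}{2(2-\alpha)}$ ensures $\e>\tfrac12$. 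Everything else is routine.
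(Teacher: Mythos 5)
Your proof is correct and follows essentially the same route as the paper's: a variance bound over the independent truncated entries (the paper uses the Doob martingale decomposition, you use the equivalent Efron--Stein form), with a per-entry influence of order $\|f\|_\La/(N|\Im(z)|^2)$ --- obtained in the paper by differentiating the resolvent, and in your case by the rank-two resolvent identity for a resampled entry --- combined with the truncated second-moment estimate \eqref{truncatedmoments4} and Chebychev's inequality. The only cosmetic difference is that your finite-difference argument treats Lipschitz $f$ directly, whereas the paper first assumes $f$ continuously differentiable and concludes by density.
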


This Lemma shows that since $\E[L^{z,\kappa}_{\phi(N)}]$ converges
weakly to $\mu^z$, then $L^{z,\kappa}_{\phi(N)}$ also converges
almost surely to the non random probability $\mu^z$. From there, one
can apply Theorem \ref{complex stable laws} of Appendix
\ref{sec-conc} or more precisely its extension Theorem \ref{extended
complex stable laws} which has been built to fit exactly our needs
here, when applied to the variables $X_k= A(0,k)^2$
and therefore $\widetilde a_N=a_N^2$. One must simply
notice that the exponent $\alpha$ in Theorem \ref{extended complex
stable laws} must be replaced here by $\frac{\alpha}{2}$. This
concludes the proof of Lemma \ref{convergencePN}.

\noindent
{\it
Proof of Lemma (\ref{concentration}) .} We prove this concentration
lemma using standard martingale decomposition. We assume that $f$
is continuously differentiable, the generalization to any Lipschitz
function being deduced  by density. We put
$$F_N(A^\kappa_N({kl}), k\le l):=
L_N^{z,\kappa}(f)=\frac{1}{N}\sum_{k=1}^N f(G_N(z)_{kk})$$ Let
$n=N(N-1)/2+N$ and index the set $(A^\kappa_N({kl}),
k\le l)$ by $(A^\kappa_i, 1\le i\le N(N-1)/2+N$
 for some lexicographic order. Then, if we let $\Fa_i=\sigma(
A_j^\kappa, 1\le j\le i)$, the independence and identical
distribution of the $A_i^\kappa$'s shows that, if $P_N$ denotes the
law of $A_i^\kappa$ (i.e the properly truncated and normalized
version of $P$),

\begin{eqnarray}
&&\E[ (F_N-\E[F_N])^2]\nonumber\\
&=& \sum_{i=0}^{n-1} \E[ (\E[F_N|\Fa_{i+1}]-\E[F_N|\Fa_i])^2]\nonumber\\
&=&\sum_{i=0}^{n-1} \int \left(\int F_N(x_1,\cdot,x_{i+1} ,
y_{i+2},\cdot, y_n)dP^{\ot n}_N(y) -\int F_N(x_1,\cdot,x_{i} ,
y_{i+1},\cdot, y_n)dP_N^{\ot n}(y)\right)^2 dP_N^{\ot i+1}
(x)\nonumber\\
&\le& \sum_{i=0}^{n-1} \int   ( F_N(x_1,\cdots,x_{i+1} ,\cdots,x_n)
-\int F_N(x_1,\cdots,x_{i}
, y, x_{i+2}\cdots x_n)dP_N(y))^2 dP_N^{\ot n}(x) \nonumber\\
&\le& \sum_{i=0}^{n-1} \|\partial_{x_{i+1}} F_N\|_\infty^2 \int
(x-y)^2 dP_N^{\ot 2}(x,y) \label{truc}
\\
\nonumber
\end{eqnarray}
In our case, for all $k\in \{1,\cdots,N\}$, all
$m,l\in\{1,\cdots,N\}$,
$$\partial_{A_{ml}} f(G_N(z)_{kk})=
f'(G_N(z)_{kk})(G_{N}(z)_{kl}G_{N}(z)_{mk}+G_{N}(z)_{km}G_{N}(z)_{lk})$$
which yields
\begin{eqnarray*}
\partial_{A_{ml}} F_N(A)&=&\frac{1}{N}\sum_{k=1}^N
f'(G_N(z)_{kk})(G_{N}(z)_{kl}G_{N}(z)_{mk}+G_{N}(z)_{km}G_{N}(z)_{lk})\\
&=&\frac{1}{N}\left(
[G_N(z)D(f')G_N(z)]_{ml}+[G_N(z)D(f')G_N(z)]_{lm}\right)
\end{eqnarray*}
with $D(f')$ the diagonal matrix with entries
$(f'(G_N(z)_{kk}))_{1\le k\le N} $. Note that the spectral radius of
$G_N(z)D(f')G_N(z)$ is bounded by $\|f'\|_\infty /|\Im(z)|^2$ and so
since for all $l,m\in\{1,\cdots,N\}^2$
$$\left|[G_N(z)D(f')G_N(z)]_{lm}\right|
\le \|G_N(z)D(f')G_N(z)\|_\infty\le \|f'\|_\infty /|\Im(z)|^2$$ we
conclude that for all $l,m\in\{1,\cdots,N\}^2$,
\begin{eqnarray*}
|\partial_{A_{ml}} F(A)|&\le &\frac{2\|f'\|_\infty
}{N|\Im(z)|^2}.\\
\end{eqnarray*}
Thus, \eqref{truc} shows that
\begin{eqnarray*}
\E[ (F_N-\E[F_N])^2]&\le & \frac{4\|f'\|_\infty ^2}{N^{2}|\Im(z)|^4}
\frac{N^2}{2}\max_{k,l} \E[(A^\kappa_{kl}-\E[A^\kappa_{kl}])^2]
\\
&\le& \frac{2\|f'\|_\infty^2
}{|\Im(z)|^4}N^{-\e}\\
\end{eqnarray*}
where we used the truncated moment
estimate (\ref{truncatedmoments4}). Chebychev's inequality then
provides the announced bound. 

\end{proof}

We now apply Theorem \ref{theo-limitpoint} for a particular choice
of the function f. To this end, we need to define, for any
$\alpha>0$, the usual branch of the power function $x\ra
x^{\alpha}$, which is  the analytic function on $\C\backslash \R^-$
such that $(i)^{\alpha}= e^{i\frac{\pi\alpha}{2}}$. This amounts to
choosing, if $x=r e^{i\theta}$ with $\theta\in ]-\pi,\pi[$,
$$x^\alpha= r^\alpha e^{i \alpha \theta}.$$
This function is analytic on $\C\backslash \R^-$ and extends by
continuity to $x=r e^{i\theta}$ with $\theta$ decreasing to $-\pi$
$$\lim_{\theta\downarrow -\pi} (r e^{i\theta})^{\alpha}
=r^{\alpha}e^{-i\pi\alpha}$$ 
When $x=r e^{i\theta}$ is on the other side of the cut $\R^-$, i.e when
$\theta$ is slightly smaller than $\pi$, the function jumps by a
multiplicative factor $e^{2i\alpha\pi}$. We want to choose in
(\ref{theo-limitpoint}) the analytic function
$f(x)=x^{\frac{\alpha}{2}}$.

\begin{theo}\label{projectionoflimitpoint}
For $0< \kappa < \frac{1}{2(2-\alpha)}$, let $\mu^z$ be a limit
point  of $\E[L_N^{z,\kappa}]$ and define $X_{\mu^z}:=\int
x^{\frac{\alpha}{2}} d\mu^z(x)$. Then
\begin{enumerate}
\item $X_{\mu^z}$ is analytic in $\C^+$ and $|X_z|\le 
\frac{1}{|\Im(z)|^{\frac{\alpha}{2}}}$
\item $X_{\mu^z}$ is a solution of the following equation:
\begin{equation}\label{cocottes}
X_{\mu^z}= i C(\alpha)\int_0^\infty (it)^{\frac{\alpha}{2}-1}
e^{itz} \exp\{-c(\alpha)(it)^{\frac{\alpha}{2}}X_{\mu^z}\}
dt.
\end{equation}
with
$C(\alpha)=\frac{e^{i\frac{\pi\alpha}{2}}}{\Gamma(\frac{\alpha}{2})}$
and $c(\alpha)=\Gamma(1-\frac{\alpha}{2})$.
\end{enumerate}
\end{theo}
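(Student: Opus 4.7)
The plan is to apply Theorem~\ref{theo-limitpoint} to $f(y)=y^{\alpha/2}$ and then reduce the resulting right-hand side to a one-variable integral by inserting an integral representation of $(z-x)^{-\alpha/2}$ and computing $\int e^{-itx}\,dP^{\mu^z}(x)$ explicitly in terms of $X_{\mu^z}$. Item~(1) is essentially bookkeeping: since $\mathrm{supp}\,\mu^z\subset\C^-\cap\{|y|\le|\Im z|^{-1}\}$, the principal branch $y\mapsto y^{\alpha/2}$ is continuous and of modulus at most $|\Im z|^{-\alpha/2}$ there, giving the bound on $|X_{\mu^z}|$; analyticity in $z\in\C^+$ follows from Montel's theorem, since the prelimit functions $z\mapsto\E[\frac{1}{N}\sum_{k=1}^N(G^\kappa_N(z)_{kk})^{\alpha/2}]$ are analytic on $\C^+$ and uniformly bounded on compacts by $|\Im z|^{-\alpha/2}$, so any limit point inherits both properties.

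For item~(2), I first feed $f(y)=y^{\alpha/2}$ into Theorem~\ref{theo-limitpoint} (after a smooth cutoff outside a ball containing both $\mathrm{supp}\,\mu^z$ and the range $\{1/(z-x):x\in\mathrm{supp}\,P^{\mu^z}\}\subset\C^-$, then dominated convergence) to get $X_{\mu^z}=\int(z-x)^{-\alpha/2}\,dP^{\mu^z}(x)$. The classical identity $w^{-\alpha/2}=\Gamma(\alpha/2)^{-1}\int_0^\infty s^{\alpha/2-1}e^{-sw}ds$ extends to $w\in\C^+$ by rotating the contour to $s=-it$, $t>0$, and a short branch computation converts it into
\[
w^{-\alpha/2}=iC(\alpha)\int_0^\infty(it)^{\alpha/2-1}e^{itw}\,dt,\qquad C(\alpha)=e^{i\pi\alpha/2}/\Gamma(\alpha/2).
\]
Applying this at $w=z-x$ and using Fubini (absolute integrability is granted by $|e^{it(z-x)}|\le e^{-t\Im z}$ and the integrability of $t^{\alpha/2-1}$ near $0$) yields
\[
X_{\mu^z}=iC(\alpha)\int_0^\infty(it)^{\alpha/2-1}e^{itz}\Bigl[\int e^{-itx}\,dP^{\mu^z}(x)\Bigr]dt.
\]

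The crucial step is to show that $\int e^{-itx}\,dP^{\mu^z}(x)=\exp\{-c(\alpha)(it)^{\alpha/2}X_{\mu^z}\}$ for $t>0$. Since $P^{\mu^z}$ is the weak limit of the laws $P^z_N$ of $S_N=\sum_k A^\kappa_N(0k)^2 G^\kappa_N(z)_{kk}$, conditioning on $A^\kappa_N$ makes $g_k:=G^\kappa_N(z)_{kk}\in\C^-$ deterministic and factorises $\E[e^{-itS_N}\mid A^\kappa_N]=\prod_k\E[e^{-itg_kY_k}]$ with $Y_k:=A^\kappa_N(0k)^2\ge 0$. Since $\Re(itg_k)=-t\,\Im g_k\ge 0$ and the non-negative $Y_k$ lie in the domain of attraction of a one-sided $\alpha/2$-stable law, the Laplace asymptotics underlying Theorem~\ref{extended complex stable laws} give, uniformly in~$k$,
\[
\log\E[e^{-itg_kY_k}]=-c(\alpha)\,(itg_k)^{\alpha/2}/N+o(1/N),
\]
where $c(\alpha)=\Gamma(1-\alpha/2)$ emerges as the Laplace exponent of the $\alpha/2$-stable subordinator. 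Because $\arg(it)+\arg g_k\in(-\pi/2,\pi/2)$, the principal branch obeys $(itg_k)^{\alpha/2}=(it)^{\alpha/2}g_k^{\alpha/2}$, so summing over $k$ factorises $(it)^{\alpha/2}$ outside the empirical mean $\frac{1}{N}\sum_k g_k^{\alpha/2}=\int y^{\alpha/2}\,dL^{z,\kappa}_N(y)$, which by Lemma~\ref{concentration} converges in probability and in expectation to $X_{\mu^z}$. Removing the conditioning in the limit gives the claimed formula, and substituting back produces~\eqref{cocottes}.

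The main obstacle is the third paragraph: one must simultaneously justify the Laplace asymptotics uniformly over the random complex directions $itg_k$, extract the precise constant $c(\alpha)$ (and incidentally $C(\alpha)$) from the branch choices, and pass from the conditional to the unconditional characteristic function as $N\to\infty$. This is precisely what Theorem~\ref{extended complex stable laws} of the appendix is tailored to deliver; once it is invoked, the three displayed identities above combine mechanically to yield~\eqref{cocottes}.
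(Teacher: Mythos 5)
Your items (1) and the reduction at the start of item (2) follow the paper's own route: the analyticity and the bound $|X_{\mu^z}|\le |\Im(z)|^{-\alpha/2}$ are obtained exactly as in the paper from the prelimit analytic functions, the identity $X_{\mu^z}=\int(z-x)^{-\alpha/2}dP^{\mu^z}(x)$ is the paper's application of Theorem \ref{theo-limitpoint}, and your contour representation of $w^{-\alpha/2}$ together with Fubini is precisely Lemma \ref{repre1} and display \eqref{coco}. Up to that point the proposal is sound and essentially identical to the printed argument.

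The divergence, and the weak point, is your third paragraph. The paper obtains the Fourier--Laplace identity $\int e^{-itx}dP^{\mu^z}(x)=\exp\{-c(\alpha)(it)^{\alpha/2}X_{\mu^z}\}$ by a direct application of Theorem \ref{FL transform of complex stable laws} (with $\alpha$ replaced by $\alpha/2$ and $\nu=\mu^z$), a statement about the limiting complex stable law itself, proved in the appendix using exact one-sided stable variables and the classical formula \eqref{FL transform for real stable laws}; this is where the constant $\Gamma(1-\tfrac{\alpha}{2})$ comes from. You instead try to rederive the identity at the prelimit level, by conditioning on the matrix and invoking a uniform expansion $\log\E[e^{-itg_kY_k}]=-c(\alpha)(itg_k)^{\alpha/2}/N+o(1/N)$, which you attribute to ``the Laplace asymptotics underlying Theorem \ref{extended complex stable laws}''. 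That attribution is incorrect: Theorem \ref{extended complex stable laws} (like Theorem \ref{complex stable laws}) is proved through the max/variance criteria of Galambos and delivers only weak convergence of $S_N$ to the complex stable law; it contains no Abelian/Laplace expansion and never produces the constant $\Gamma(1-\tfrac{\alpha}{2})$. So, as written, your crucial step rests on a result that does not exist in the paper in that form. Your route could be made rigorous, but it would require proving an additional uniform Abelian estimate of the type $N\bigl(1-\E[e^{-\gamma X_k^{\delta}/\widetilde a_N}]\bigr)\to\Gamma(1-\tfrac{\alpha}{2})\gamma^{\alpha/2}$, uniformly for $\gamma$ in compact subsets of $\{\Re\gamma\ge 0\}$ (using the uniform convergence theorem for slowly varying functions, in the spirit of Lemma \ref{uniform slow variation}), checking that the truncation only perturbs this by $o(1/N)$, and then removing the conditioning via Lemma \ref{concentration} and bounded convergence -- none of which is carried out in your sketch. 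The economical fix is simply to quote Theorem \ref{FL transform of complex stable laws} with $\nu=\mu^z$, which is exactly the paper's move and renders your third paragraph unnecessary; the branch computation $(itg_k)^{\alpha/2}=(it)^{\alpha/2}g_k^{\alpha/2}$ you mention is correct but is then not needed, since the factorization is already built into that theorem.
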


\begin{proof}

The first point is obvious.  Indeed, for some increasing function
$\phi$,
$$X_{\mu^z}=\lim_{N\ra\infty} X^{\phi(N)}_z,\quad
X^N_z:= \E[\frac{1}{N}\sum_{k=1}^{N} \left(
(z-A_{N}^\kappa)^{-1}_{kk}\right)^{\frac{\alpha}{2}}].$$ For each
$N$, $X^N_z$ is an analytic function on $\C^+$. Moreover,
$|X^N_z|\le \frac{1}{|\Im(z)|^{\frac{\alpha}{2}}}$ for all $N$. This
entails that any limit point $X_{\mu^z}$ must  also be  analytic in
$\C^+$.

In order to prove the second point and obtain the closed equation
(\ref{cocottes}) we will need the following classical identity:

\begin{lem}\label{repre1}
For all $z\in\C^+$,

$$\left(\frac{1}{z}\right)^{\frac{\alpha}{2}}
=i C(\alpha)\int_0^\infty (it)^{\frac{\alpha}{2}-1} e^{it z} dt$$
with
$C(\alpha)=\frac{e^{i\frac{\pi\alpha}{2}}}{\Gamma(\frac{\alpha}{2})}$

\end{lem}

This Lemma is proven by a simple contour integration, it is also a
consequence  of Lemma \ref{repre2} , proven in the next section (
plug y=0 in the statement of Lemma \ref{repre2}).

By Theorem (\ref{theo-limitpoint}), and since $\mu^z$ and
$P^{\mu^z}$ are supported in $\C^-$, we can write
\begin{eqnarray*}
X_{\mu^z}&=&
\int \left(\frac{1}{z-x}\right)^{\frac{\alpha}{2}}dP^{\mu^z}(x).\\
\end{eqnarray*}

Applying Lemma \ref{repre1} to $z\ra z-x\in\C^+$ for $P^{\mu^z}$
almost all $x$, and integrating over the $x$'s we have, by Fubini's
theorem,
\begin{equation}\label{coco}
X_{\mu^z}= i C(\alpha)\int_0^\infty (it)^{\frac{\alpha}{2}-1}
e^{itz} \int e^{-itx} dP^{\mu^z}(x) dt.
\end{equation}
We now use Theorem \ref{FL transform of complex stable laws} in the
appendix, with $\nu=\mu^z$ here, and replacing $\alpha$ in Theorem
\ref{FL transform of complex stable laws} by $\frac{\alpha}{2}$. We see that:

\begin{equation}
\int e^{-itx} dP^{\mu^z}(x) =\exp\{-c(\alpha) (it)^{\frac{\alpha}{2}
}\int x^{\frac{\alpha}{2}}d\mu^z(x)\}.
\end{equation}
Plugging this equality into \eqref{coco} yields
\begin{equation}
X_{\mu^z}= i C(\alpha)\int_0^\infty (it)^{\frac{\alpha}{2}-1}
e^{itz} \exp\{-c(\alpha) (it)^{\frac{\alpha}{2} }\int
x^{\frac{\alpha}{2}}d\mu^z(x)\}
dt.
\end{equation}
We have obtained the announced closed equation
\begin{equation}
X_{\mu^z}= i C(\alpha)\int_0^\infty (it)^{\frac{\alpha}{2}-1}
e^{itz} \exp\{- c(\alpha)(it)^{\frac{\alpha}{2}}X_{\mu^z}\}
dt.
\end{equation}

\end{proof}

\section{Proofs of Theorem \ref{main} and of Theorem
\ref{main2}}\label{secuniq}

In this section we gather the preceding arguments and prove Theorem
\ref{main} and Theorem \ref{main2}. This proof will be based on the
following uniqueness result for the closed equation
(\ref{cocottes}). We recall the notation
$$g_{\alpha}(y): =
\frac{2}{\alpha} \int_0^\infty e^{-v^{\frac{2}{\alpha}}} e^{-vy}dv =
\int_0^\infty t^{\frac{\alpha}{2}-1} e^{-t} \exp\{-
t^{\frac{\alpha}{2}} y\} dt$$

\begin{theo}\label{uniquenessofX}
\begin{enumerate}
\item
There exists a unique analytic function $X_z$ of $z\in \C^+ $, such
that $|X_z|= O(|Im(z)| ^{-\frac{\alpha}{2}})$ at infinity,
satisfying the equation
\begin{equation}
\label{Xz}
X_{z}= i C(\alpha)\int_0^\infty (it)^{\frac{\alpha}{2}-1}
e^{itz} \exp\{- c(\alpha)(it)^{\frac{\alpha}{2}}X_{z}\}
dt.
\end{equation}
\item
This solution in fact also satisfies: $|X_z|= O(|z|
^{-\frac{\alpha}{2}})$.
\item
If one defines $Y_z:= (-\frac{1}{z})^{\frac{\alpha}{2}} X_z$, then
$Y_z$ is the unique solution of the equation
$$(-z)^{\alpha} Y_z=C(\a)g_{\alpha}(c(\alpha)Y_z).$$
analytic on $\C^+$ and tending to zero at infinity. In fact $|Y_z|=
O(|z|^{-\alpha})$
\end{enumerate}
\end{theo}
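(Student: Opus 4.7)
The plan is first to transform the integral equation \eqref{Xz} into an equivalent fixed-point equation for $Y_z=(-1/z)^{\alpha/2}X_z$. For $z\in\C^+$ and $t>0$, the principal branches satisfy $(it)^{\alpha/2}(-z)^{\alpha/2}=(-itz)^{\alpha/2}$, so the exponent in \eqref{Xz} rewrites as $-c(\alpha)(-itz)^{\alpha/2}Y_z$. Substituting $s=-itz$ — whose image ray lies in the open right half-plane since $-iz$ does — and deforming the contour back to the positive real axis, the integral collapses to
$$X_z=C(\alpha)(-z)^{-\alpha/2}\int_0^\infty s^{\alpha/2-1}e^{-s}\exp\{-c(\alpha)s^{\alpha/2}Y_z\}\,ds=C(\alpha)(-z)^{-\alpha/2}g_{\alpha}(c(\alpha)Y_z).$$
Multiplying both sides by $(-1/z)^{\alpha/2}$ yields the announced form $(-z)^{\alpha}Y_z=C(\alpha)g_\alpha(c(\alpha)Y_z)$, equivalently $Y_z=\Phi_z(Y_z)$ with $\Phi_z(Y):=C(\alpha)(-z)^{-\alpha}g_\alpha(c(\alpha)Y)$.

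For uniqueness I would run a Banach fixed-point argument on a region $\{z\in\C^+:\Im z\ge T\}$ with $T$ large. Since $g_\alpha$ is entire with $g_\alpha(0)=\Gamma(\alpha/2)$ and $g_\alpha'$ is locally bounded, the Lipschitz constant of $\Phi_z$ on the closed ball $\{|Y|\le r\}$ is bounded by $|C(\alpha)|\,c(\alpha)\,|z|^{-\alpha}\sup_{|Y|\le r}|g_\alpha'(c(\alpha)Y)|$, which is $<1$ and the ball is mapped into itself once $r$ is fixed small and $T$ is large enough. This produces a unique fixed point $Y_z$ in that ball, analytic in $z$ by analytic dependence on parameters. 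Any candidate $\tilde Y$ analytic on $\C^+$ and tending to zero at infinity must, by its decay, lie inside the ball once $\Im z$ is large enough, so $\tilde Y_z=Y_z$ there; analytic continuation on the connected domain $\C^+$ then forces $\tilde Y\equiv Y$ everywhere. Translated back via the nonvanishing holomorphic factor $(-1/z)^{\alpha/2}$, the hypothesis $|X_z|=O(|\Im z|^{-\alpha/2})$ at infinity yields $|Y_z|\to 0$ as $\Im z\to\infty$, placing us in the setting just described and giving uniqueness of $X_z$. Existence is already delivered by Theorem \ref{projectionoflimitpoint}, which exhibits $X_{\mu^z}$ as such a solution.

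Finally, the improved decays in parts (2) and (3) follow directly from the fixed-point equation: since $Y_z\to 0$ at infinity, $g_\alpha(c(\alpha)Y_z)\to\Gamma(\alpha/2)$, hence $|Y_z|=|C(\alpha)(-z)^{-\alpha}g_\alpha(c(\alpha)Y_z)|=O(|z|^{-\alpha})$ and therefore $|X_z|=|(-z)^{\alpha/2}Y_z|=O(|z|^{-\alpha/2})$. The main technical obstacle is the contour deformation justifying the first display: one must check that the sector between the positive real axis and the ray $\{-izt:t>0\}$ contains no branch-singularity of the (multivalued) integrand, that arcs at infinity inside the sector give a vanishing contribution, and, most delicately, that $\Re\bigl((it)^{\alpha/2}X_z\bigr)\ge 0$ uniformly along the rotation so that the exponential in the integrand remains bounded. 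These conditions all follow from the principal-branch identities above together with the a priori bound $|X_z|\le|\Im z|^{-\alpha/2}$.
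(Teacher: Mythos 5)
Your proposal follows essentially the same route as the paper's proof: the substitution $s=-itz$ with a contour rotation is exactly the paper's Lemma \ref{repre2}; your Banach fixed-point argument for small $Y$ and large $z$ is the same local uniqueness statement the paper obtains from the implicit function theorem applied to $F(u,y)=ug_\alpha(y)-y$ at $(0,0)$; and the extension of uniqueness from large $\Im z$ to all of $\C^+$ by the identity theorem, the appeal to Theorem \ref{projectionoflimitpoint} for existence, and the bootstrap of the decay from the equation all match the paper.

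Two points need repair, though neither changes the structure. First, at the step you yourself single out as most delicate, your justification is incorrect: you assert that the rotation requires $\Re\bigl((it)^{\frac{\alpha}{2}}X_z\bigr)\ge 0$ along the deformation and that this follows from $|X_z|\le |\Im(z)|^{-\frac{\alpha}{2}}$. A bound on the modulus of $X_z$ cannot produce a sign condition on a real part, and no such condition is needed (nor need it hold). The deformation is valid for an arbitrary coefficient $y\in\C$ in the exponent: on the arc of radius $R$ joining the two rays one has $|e^{uz}|\le e^{-cR}$ for some $c>0$ depending on $z$, while $|\exp\{-c(\alpha)u^{\frac{\alpha}{2}}y\}|\le e^{c(\alpha)R^{\frac{\alpha}{2}}|y|}$, and since $\frac{\alpha}{2}<1$ the product vanishes as $R\to\infty$; this is precisely how Lemma \ref{repre2} is proved in the paper for every $y\in\C$. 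Second, your derivation of $|Y_z|=O(|z|^{-\alpha})$ and $|X_z|=O(|z|^{-\frac{\alpha}{2}})$ begins with ``$Y_z\to 0$ at infinity'', but the hypothesis only gives $|Y_z|\le C\,(|z|\,|\Im(z)|)^{-\frac{\alpha}{2}}$, i.e. smallness as $\Im(z)\to\infty$, not as $|z|\to\infty$ near the real axis. To obtain the stated rate for all large $|z|$ in $\C^+$, note that your contraction estimate only involves $|z|^{-\alpha}$, so the small fixed point exists and is analytic on $\C^+\cap\{|z|\ge T\}$; it coincides with $Y_z$ on the set where $\Im(z)\ge T$, hence on all of $\C^+\cap\{|z|\ge T\}$ by the identity theorem, and then the equation gives $|Y_z|\le |C(\alpha)|\sup_{|Y|\le r}|g_\alpha(c(\alpha)Y)|\,|z|^{-\alpha}$. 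This is the same continuation the paper carries out via the implicit function theorem (and reuses in Section \ref{studylimit}).
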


\begin{proof}

We already know that there exists such an analytic solution $X_z$.
Indeed we have seen in the preceding section that, if $\mu^z$ is a
limit point, then $X_{\mu^z}$ is such a solution. In order to prove
uniqueness, we will use that:
\begin{lem}\label{repre2}
For all $z\in\C^+$, and any $y \in \C$

$$(-\frac{1}{z})^{\frac{\alpha}{2}}g_{\alpha}(y)
=i \int_0^\infty (it)^{\frac{\alpha}{2}-1} e^{it
z}\exp[-(-z)^{\frac{\alpha}{2}}(it)^{\frac{\alpha}{2}}y] dt$$

\end{lem}

{\bf Proof.} 

We  write $z=re^{i\theta}$ with some $\theta\in ]0,\pi[$. Assume first that $\theta\in ]0,\frac{\pi}{2}[$.
Since
$f(u)=
(u)^{\frac{\alpha}{2}-1}e^{uz}e^{-u^{\frac{\alpha}{2}}[(-z)^{\frac{\alpha}{2}}y]}$
is analytic in $\C\backslash \R^-$, for all $R>0$ finite, its
integral over the contour
$$\Gamma=\{ i t, \e\le t\le R\}\cup \{e^{i\eta} R, \eta \in
[\frac{\pi}{2}, \pi-\theta]\} \cup \{ e^{i\pi -i\theta} t, R\le t\le
\e\} \cup \{e^{i\eta} \e, \eta\in [ \pi-\theta,\frac{\pi}{2}]\}$$
vanishes. Note that $\eta+\theta\in [\frac{\pi}{2}+\theta,\pi]$ so
that $\Re(R e^{i\eta} z)= Rr \cos(\eta+\theta)<0$ for  all $\eta\in
[\frac{\pi}{2}, \pi-\theta]$ and $\theta\in ]0,\frac{\pi}{2}[$.

This shows
that
$$\lim_{R\ra\infty}R
f(e^{i\eta} R)=0\quad\forall \eta\in [\frac{\pi}{2}, \pi-\theta]
\Rightarrow \lim_{R\ra\infty}R\int_{ \eta\in [\frac{\pi}{2},
\pi-\theta]} f(e^{i\eta} R)d\eta=0.$$ Similarly,
$$\limsup_{\e\ra 0}|\int_{ \eta\in [\frac{\pi}{2}, \pi-\theta]}
f(e^{i\eta} \e)d\eta|<\infty \Rightarrow \lim_{\e\ra 0}\e\int_{
\eta\in [\frac{\pi}{2}, \pi-\theta]} f(e^{i\eta} \e)d\eta=0$$ Hence,
letting $R\ra\infty$ and $\e\ra 0$, we find
$$i\int_0^\infty f(it) dt+\int_{+\infty}^0 f( e^{i(\pi-\theta)} t)
e^{i(\pi-\theta)}dt=0.$$ In other words,
\begin{eqnarray*}
i\int_0^\infty (it)^{\frac{\alpha}{2}-1} e^{it
z}e^{-(it)^{\frac{\alpha}{2}}[(-z)^{\frac{\alpha}{2}}y]} dt
&=&-\int_0^\infty (-e^{-i\theta} t)^{\frac{\alpha}{2}-1} e^{-t|z|}
e^{-(-e^{-i\theta}t)^{\frac{\alpha}{2}}[(-z)^{\frac{\alpha}{2}}y]}e^{-i\theta}dt\\
&=& -z^{-1}\int_0^\infty (-z^{-1} t)^{\frac{\alpha}{2}-1} e^{-t}
e^{-(-z^{-1}t)^{\frac{\alpha}{2}}[(-z)^{\frac{\alpha}{2}}y]}dt\\
\end{eqnarray*}
where we finally did the change of variable $t'=|z|t$. 

Noting the obvious facts $(-z^{-1}
t)^{\frac{\alpha}{2}-1}= (-z^{-1})^{\frac{\alpha}{2}-1}
t^{\frac{\alpha}{2}-1}$ and $ (-z^{-1}
)^{\frac{\alpha}{2}-1}(-z)^{\frac{\alpha}{2}-1}=1$,
we thus have proved that
\begin{eqnarray*}
i\int_0^\infty (it)^{\frac{\alpha}{2}-1} e^{it z}\exp\{-
(-z)^{\frac{\alpha}{2}}(it)^{\frac{\alpha}{2}} y\} dt &=&
(-z^{-1})^{\frac{\alpha}{2}} \int_0^\infty
t^{\frac{\alpha}{2}-1} e^{-t} e^{-yt^{\frac{\alpha}{2}}}dt\\
\end{eqnarray*}
which proves the claim when $\theta\in ]0,\frac{\pi}{2}[$. The case $\theta\in [\frac{\pi}{2}, \pi[$ is identical after an immediate modification of the definition of the contour.
It can also be obtained by a trivial analytic extension argument$\square$

\smallskip

By Lemma \ref{repre2} we remark that, if $X_z$ is a solution of the
equation (\ref{Xz}) and if $z=|z|e^{i\theta}$,
\begin{eqnarray}
X_z&=&-e^{-i\theta}
C(\alpha)\int_0^\infty (-e^{-i\theta}t)^{\frac{\alpha}{2}-1}
e^{-t|z|} \exp\{- c(\alpha) (e^{-i\theta}t)^{\frac{\alpha}{2}}X_z\}
dt\label{coc}\\
&=& -\frac{1}{z}  C(\alpha)\int_0^\infty
(-\frac{t}{z})^{\frac{\alpha}{2}-1} e^{-t} \exp\{- c(\alpha)
(-\frac{t}{z})^{\frac{\alpha}{2}}X_z\}
dt\nonumber\\
&=&(-\frac{1}{z})^{\frac{\alpha}{2}}  C(\alpha)\int_0^\infty
t^{\frac{\alpha}{2}-1} e^{-t} \exp\{- c(\alpha) t^{\frac{\alpha}{2}}
(-\frac{1}{z})^{\frac{\alpha}{2}} X_z\}
dt.\label{cocl}\\
\nonumber
\end{eqnarray}
Hence, if $Y_z:= (-\frac{1}{z})^{\frac{\alpha}{2}} X_z$,
we obtain 
\begin{eqnarray}
(-z)^{\alpha} Y_z &=&
C(\alpha)\int_0^\infty
t^{\frac{\alpha}{2}-1} e^{-t} \exp\{- c(\alpha) t^{\frac{\alpha}{2}}
Y_{z}\}
dt.\label{poi}\\
\nonumber
\end{eqnarray}
This equation for $Y_z$ can be written simply as
$$(-z)^{\alpha} Y_z=C(\a)g(c(\alpha)Y_z).$$
We recall that we have assumed that there exists a constant $C_1$
such that $|X_z| \leq C_1 \Im(z)^{-\frac{\alpha}{2}}$.

Now, consider the
function of two complex variables $F(u,y)=ug_{\alpha}(y)-y$.
Obviously $F(0,0)=0$ and $\partial_y F(0,0)=-1$. By the local
implicit function theorem, there exists $\epsilon_1>0$ and
$\epsilon_2>0$, such that for every $u \in \C$ with $|u|<
\epsilon_1$ there exists a unique $y(u) \in \C$ with $|y(u)|<
\epsilon_2$ satisfying the equation $ F(u,y(u))=0$, i.e
$ug_{\alpha}(y(u))=y(u)$. Moreover
\begin{equation}
\label{croissancey} |y(u)|\le C|u|.
\end{equation}
For any $z \in \C^+$, such that $\Im(z)> L$, with $L^{\alpha}>
\frac{1}{C(\alpha)\epsilon_1} \vee \frac{c(\alpha)C_1}{
\epsilon_2}$, then $|X_z| \leq C_1 L^{-\frac{\alpha}{2}} $  so that
$|Y_z| \leq \frac{C_1}{L^{\alpha}} \leq
\frac{\epsilon_2}{c(\alpha}$. Thus for $z \in \C^+$, such that
$\Im(z)> L$  we have that
$$ |\frac{1}{C(\alpha)(-z)^{\alpha}}| \leq \epsilon_1, \quad 
|c(\alpha)Y_z| \leq \epsilon_2$$

Thus the uniqueness in the local implicit function theorem shows
that $Y_z$ is given by $Y_z=\frac{1}{c(\a)} y(
\frac{1}{C(\a)(-z)^{\alpha}})$ and thus that $X_z=
\frac{1}{(-\frac{1}{z})^{\frac{\alpha}{2}}} Y_z$.  Since $X_z$ is
analytic on $ z \in \C^+$ and uniquely determined on the set of $ z
\in \C^+$ such that $\Im(z)> L$ it is uniquely determined. This
proves the claim of uniqueness for $X_z$. Using the bound
(\ref{croissancey}) now proves the improved bound at infinity, i.e
$|X_z|= O( | z|^{-\frac{\alpha}{2}})$. These arguments prove the
second and third statements of the theorem.
\end{proof}

We can now deduce from this last uniqueness result the convergence
of the mean of the normalized trace of the resolvent.

\begin{theo}\label{theo-limitpoint-uniq}
For any  $\kappa\in ]0,\frac{1}{2(2-\alpha)}[$,
any $z\in \C^+ $,
  $\E[\frac{1}{N}\sum_{k=1}^N G^{\kappa}_N(z)_{kk}]$
converges as $N$ goes to infinity to
\begin{equation}
\label{eqG} G_{\alpha}(z):=i\int_0^\infty e^{it z} e^{-c(\alpha)
(it)^{\frac{\alpha}{2}} X_z} dt =-\frac{1}{z}\int_0^\infty e^{-t}
e^{-c(\alpha) t^{\frac{\alpha}{2}} Y_z} dt
\end{equation}

\end{theo}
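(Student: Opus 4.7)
The plan is to assemble the three pieces already in hand: the tightness of $\{\E[L_N^{z,\kappa}]\}_N$ from Lemma \ref{tight}, the fixed-point description of any subsequential weak limit from Theorem \ref{theo-limitpoint}, and the uniqueness of $X_z$ from Theorem \ref{uniquenessofX}. Fix $z \in \C^+$ and $\kappa \in (0,\tfrac{1}{2(2-\alpha)})$. Every $L_N^{z,\kappa}$ is supported in the compact disk $D_z = \C^- \cap \{y : |y| \le |\Im z|^{-1}\}$, so $\{\E[L_N^{z,\kappa}]\}_N$ is tight with weak limit points $\mu^z$ supported in $D_z$, and $\E[\tfrac{1}{N}\sum_k G_N^\kappa(z)_{kk}] = \int y \, d\E[L_N^{z,\kappa}](y)$. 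Because $y \mapsto y$ is bounded and continuous on $D_z$, it suffices to show that $\int y \, d\mu^z(y) = G_\alpha(z)$ for every such subsequential limit, and then invoke uniqueness to conclude convergence of the full sequence.

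Fix one limit point $\mu^z$. The auxiliary stable measure $P^{\mu^z}$ is supported in the closed lower half-plane, inherited via the convergence statements of Appendix \ref{sec-conc} from the fact that each $P_N^z$ is the law of $\sum_k (A_N^\kappa(0,k))^2 G_N^\kappa(z)_{kk} \in \C^-$. Hence $\Im(z-x) > 0$ for $P^{\mu^z}$-a.e.\ $x$, and $x \mapsto (z-x)^{-1}$ takes its values in $D_z$. Choosing any bounded continuous extension of the identity on $D_z$ and applying Theorem \ref{theo-limitpoint} yields
$$\int y \, d\mu^z(y) \;=\; \int \frac{1}{z-x} \, dP^{\mu^z}(x).$$
Applying the integral representation of Lemma \ref{repre1} (with the lemma's $\alpha/2$ taken to be $1$, so that $(z-x)^{-1}$ becomes a scalar multiple of $\int_0^\infty e^{it(z-x)}\,dt$), Fubini (legitimate since $|e^{it(z-x)}| \le e^{-t \Im z}$ uniformly in $x$), and Theorem \ref{FL transform of complex stable laws} with exponent $\alpha/2$ to identify the inner integral as $\exp\{-c(\alpha)(it)^{\alpha/2} X_{\mu^z}\}$, we obtain $\int y \, d\mu^z(y)$ as the advertised integral against $e^{itz} e^{-c(\alpha)(it)^{\alpha/2} X_{\mu^z}}$.

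By Theorem \ref{projectionoflimitpoint}, $X_{\mu^z}$ is analytic on $\C^+$, satisfies $|X_{\mu^z}| \le |\Im z|^{-\alpha/2}$, and solves the fixed-point equation (\ref{Xz}); by Theorem \ref{uniquenessofX} it must equal the unique solution $X_z$. Thus every subsequential limit of $\E[\tfrac{1}{N}\sum_k G_N^\kappa(z)_{kk}]$ takes the common value given by the first formula for $G_\alpha(z)$ in (\ref{eqG}), so the whole sequence converges. The equivalence with the second form $-\tfrac{1}{z}\int_0^\infty e^{-t} e^{-c(\alpha) t^{\alpha/2} Y_z}\,dt$ then follows by the same contour-deformation used to pass from (\ref{coc}) to (\ref{cocl}) in the proof of Theorem \ref{uniquenessofX} (equivalently, the argument of Lemma \ref{repre2}), now applied to the integrand $e^{uz} e^{-c(\alpha) u^{\alpha/2} X_z}$: the absence of the $u^{\alpha/2-1}$ prefactor only makes the arc estimates easier, and the change of variables $t \mapsto t/|z|$ on the deformed contour converts $(it)^{\alpha/2} X_z$ into $t^{\alpha/2} Y_z$. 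The only genuine technical point is verifying that $P^{\mu^z}$ is concentrated in the closed lower half-plane so that the integral representation of $(z-x)^{-1}$ and Fubini apply; everything else is a direct assembly of already-established results.
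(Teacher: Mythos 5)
Your proposal is correct and follows essentially the same route as the paper: take a subsequential weak limit $\mu^z$ of the compactly supported measures $\E[L_N^{z,\kappa}]$, apply Theorem \ref{theo-limitpoint} with (an extension of) $f(x)=x$, use the exponential representation of $(z-x)^{-1}$, Fubini, and Theorem \ref{FL transform of complex stable laws} with exponent $\alpha/2$ to express $\int x\,d\mu^z(x)$ through $X_{\mu^z}$, then invoke the uniqueness of Theorem \ref{uniquenessofX} to identify $X_{\mu^z}=X_z$ and conclude convergence of the full sequence, with the second formula obtained by the contour rotation of Lemma \ref{repre2}. The only difference is cosmetic (you cite Lemma \ref{tight} where the relevant tightness in fact comes from the compact support in $D_z$, which you state correctly anyway), and you spell out the support and boundedness justifications the paper leaves implicit.
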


\begin{proof}
For any $z\in\C^+$ and any limit point $\mu^z$ ,
\begin{eqnarray*}
\int x d\mu^z(x)
&=&\int \frac{1}{z-x} dP^{\mu^z} (x)\\
&=&i\int_0^\infty \int  e^{it(z-x)}dP^{\mu^z} (x) dt\\
&=& i\int_0^\infty e^{itz}
e^{-c(\a)(it)^{\frac{\alpha}{2}} X_z}dt\\
\end{eqnarray*}
The uniqueness of $X_z$ implies that the mean of the resolvent
$\E[N^{-1}\tr(z-A^{\kappa}_N)^{-1}]$ has a unique limit point which
is given by $$G_{\alpha}(z)=i\int_0^\infty e^{itz}
e^{-c(\a)(it)^{\frac{\alpha}{2}} X_z}dt$$ This shows that
$\E[N^{-1}\tr(z-A^{\kappa}_N)^{-1}]$ converges to $G_{\alpha}(z)$.
In order to finish the proof, observe that for $z\in\C^+$, we can
use the same arguments than in the proof of Lemma \ref{repre2} to
see that
\begin{eqnarray}
G_{\alpha}(z)&=&i\int_0^\infty e^{itz}
e^{-c(it)^{\frac{\alpha}{2}} X_z}dt\nonumber\\
&=&-\frac{1}{z}\int_0^\infty e^{-t}
e^{-c(-t z^{-1})^{\frac{\alpha}{2}} X_z}dt\nonumber\\
&=& -\frac{1}{z}\int_0^\infty e^{-t} e^{-c(t )^{\frac{\alpha}{2}}
Y_z}dt\label{eqGlimit}\\
\nonumber
\end{eqnarray}
\end{proof}

This last result enables us to conclude the proof of Theorem
\ref{main} and Theorem \ref{main2}.

\noindent 
{\it Proof of Theorem \ref{main} and Theorem \ref{main2} }

By Lemma \ref{tight}, $\E[\mun_{A_N^\gamma}]$ is tight for the weak
topology. Taking any subsequence, we see that any limit point $\mu$
is such that its Stieltjes transform must be equal to
$G_{\alpha}(z)$ for all $z\in\C^+$.  This prescribes uniquely the
limit point $\mu$ and thus insures the convergence of
$\E[\mun_{A_N^\gamma}]$ towards $\mu\in\Pa(\R)$ so that
$$\int (z-x)^{-1}d\mu(x)=G_{\alpha}(z), z\in\C^+.$$
By Corollary \ref{corconv}, and the fact that
$$d_1(\E[\mun_{A_N^\kappa}], \E[\mun_{A_N}])
\le \E[d_1(\mun_{A_N^\kappa},\mun_{A_N})]$$ we also conclude that
$\E[\mun_{A_N}]$ converges weakly towards $\mu$. By Lemma
\ref{concentration}, for any $z\in\C\backslash\R$,
$L^{z,\kappa}_N(x)=\int (z-x)^{-1} d\mun_{A_N^\kappa}(x)$ converges
in probability towards $G_{\alpha}(z)$. This convergence holds as
well for finite dimensional vectors $(\int (z_i-x)^{-1}
d\mun_{A_N^\kappa}(x), 1\le i\le n)$. Since $\{(z-x)^{-1}, z\in
\C\backslash \R\}$ is dense in the set $\Ca_0(\R)$  of functions on
$\R$ going to zero at infinity, we conclude that $\int f(x)
d\mun_{A_N^\kappa}(x)$ converges in probability towards $\int
f(x)d\mu(x)$ for all $f\in \Ca_0(\R)$. But also
$\mun_{A_N^\kappa}(1)=\mu(1)=1$ and so this vague convergence can be
strengthened in a weak convergence (see the proof of Lemma
\ref{d1weak}). We finally can remove the truncation by $\kappa$ by
using Corollary \ref{corconv}. Again by Lemma \ref{concentration},
$L^{z,\kappa}_N(x)=\int (z-x)^{-1} d\mun_{A_N}(x)$ converges almost
surely along subsequences $\phi(N)$ so that $\sum
\phi(N)^{-\e}<\infty$ by Borel-Cantelli Lemma. As
$\e=\frac{2}{\alpha}-\frac{2-\alpha}{\kappa}$ is as close to one as
wished, for any sequence $\phi(N)$ so that $\sum
\phi(N)^{-\varepsilon}<\infty$ for some $\varepsilon<1$, we can
choose $\kappa$ close enough to one so that
$L^{z,\kappa}_{\phi(N)}(x)$ converges almost surely to $G(z)$. This
entails the almost sure weak  convergence of
$\hat\mu^{\phi(N)}_{A_{\phi(N)}}$ by the same arguments as above.
\hfill\xx

\begin{rmk}
If we could prove  that the equation given in
Theorem \ref{theo-limitpoint} admits a unique solution $\mu^z$, at
least for $z$ in a set large enough, the convergence of
$\E[L^{z,\kappa}_N]$  to this solution would be assured. We cannot
prove this uniqueness result. But as we have seen we do not really
need such a strong uniqueness statement either. We rather have
proved a weaker statement, i.e the uniqueness of $\int xd\mu^z(x)$, 
which already entails the uniqueness of the limit points for
$\E[\int x dL^{z,\kappa}_N(x)]$, i.e the mean Stieltjes transform of
the spectral measure of $A_N^\kappa$. This is sufficient for our
needs but the question of the uniqueness of solutions to the
equation given in Theorem (\ref{theo-limitpoint}) remains
intriguing.
\end{rmk}

\section{Study of the limiting measure. Proof of Theorem
\ref{main3}}\label{studylimit}

In this section, we prove Theorem \ref{main3}. First, the fact that
the limit measure $\mu_{\alpha}$ is symmetric is obvious. It
suffices to consider the case where the entries have symmetric
distributions. To prove the other statements, we need to consider
the limit of $G_{\alpha}(z)$ as $z$ tends to a positive real number
$x$. We first remark that the analytic function $Y_z$ defined on
$\C^+$ is univalent (i.e one-to-one). Indeed this is an obvious
consequence of the equation, valid for $z \in \C^+$:
$$(-z)^{\alpha} Y_z=C(\a)g_{\a}(c(\alpha)Y_z).$$

In order to study the boundary behavior of $G_{\alpha}(z)$, we thus
have to study the boundary behavior of the univalent function $Y_z$.
For $x \in \R$,  the cluster set $Cl(x)$ is defined as the set of
limit points of $Y_z$ when $z$ tends to $x$ (see \cite{collingwood}
or \cite {Pommerenke}). It is easy to see that for any non zero $x
\in \R$ the cluster set $Cl(x)$ is reduced to one point in $\C \cup
\{ \infty \}$. Indeed, assuming w.l.o.g that $x>0$ we have, for any
finite  $v \in Cl(x)$, the equality $ C(\alpha)g_{\alpha}(c(\alpha)
v)= e^{i\pi\alpha}(x)^{\alpha}
v$. If $Cl(x)$ contains two points it is a continuum, i.e a compact
connected set with more that one point (see \cite{collingwood}). By
analytic continuation we would then get the equality $C(\alpha)
g_{\alpha}(c(\alpha)v)= e^{i\pi\alpha}
(x)^{\alpha} v$ for every $v \in \C$ which is false. The only
remaining possibility for $Cl(x)$ is to be reduced to one finite
point or to the point at infinity. We define
$$K'_{\alpha}=\{x \in \R, Cl(x)=\{\infty\} \}$$
We first prove that $K'_{\alpha}$ is bounded. The proof of Theorem
\ref{uniquenessofX}, using the local implicit function theorem at
infinity, shows that $Y_z$ admits and analytic extension to the
set$\{z \in \C, |z|>L\}$ for $L$ large enough, and that this
extension satisfies $|Y_z| =O(|z|^{-\alpha})$. This obviously proves
that, when $|x|>L$, the cluster set $Cl(x)$ is reduced to one finite
point and thus that $K'_{\alpha}$ is bounded.

We consider the complement $U'_{\alpha}$ of $K'_{\alpha}$ . Let $x
\in U'_{\alpha}$ and  $Y_x$ the unique point in the cluster set
$Cl(x)$.  By continuity, for $x>0$, $Y_x$ satisfies the equation
$$e^{-i\pi\a} x^{\alpha} Y_x=C(\a)g(c(\alpha)Y_x).$$

The local implicit function theorem can be applied to this equation
at $(x, Y_x)$, except  for the subset say $F$ of $\R$ where the
derivative vanishes. The exceptional set $F$ must be bounded, since
the derivative does not vanish at infinity, and its points must all
be isolated. Thus $F$ is finite. For any $ x \in U_{\alpha}'
\backslash F$, the implicit function theorem shows that $Y_z$ can be
extended analytically on a complex neighborhood of $x$. Hence
$U_{\alpha}:= U'_{\alpha} \backslash F$ is open and its complement
$K_{\alpha}= K'_{\alpha} \cup F \cup \{0\}$ is closed. $K_{\alpha}$
is also bounded and thus compact.

Finally we use Beurling's Theorem which states that the set
$K'_{\alpha}$ has capacity zero, and thus also the set $K_{\alpha}$
(see \cite{collingwood} or \cite{Pommerenke}).

For any point $x$ in the open set $U_{\alpha}$  the function $Y_z$
admits an analytic extension to a complex neighborhood of $x$, and
thus the Stieltjes transform $G_{\alpha}(z)$ admits a smooth
extension, which proves that $\mu_{\alpha}$ has a smooth density
$\rho_{\alpha}$ on the open set $U_{\alpha}$. Indeed, for $x \in
U_{\alpha}$

$$\lim_{z\ra x}G_{\alpha}(z)=
H\mu_{\alpha}(x)-i\pi \rho_{\alpha}(x)= -\frac{1}{x}\int_0^\infty
e^{-t} e^{-c(\alpha) t^{\frac{\alpha}{2}} Y_x}dt$$

In particular the density of the measure $\mu_{\alpha}$ is given, if
$Y_x= r_x e^{i\phi_x}$, by
\begin{equation}\label{densite}
\rho_{\a}(x)=\frac{1}{\pi x}\int_0^\infty e^{-t} e^{-c(\a)
t^{\frac{\alpha}{2}}[r_x cos(\phi_x)] } \sin[ c(\a)
t^{\frac{\alpha}{2}} r_x sin (\phi_x)] dt.
\end{equation}

Note that we now know that $Y_x$ is well defined and smooth for $x$
large enough. We also have seen that $ Y_x =O(|x|^{-\a})$ and thus
that $ Y_x \sim e^{i\pi\a} C(\a)g_{\a}(0)x^{-\a} $ . Hence, when
$x\ra\infty$, the following asymptotic behavior holds for $
G_{\a}(x)= \lim_{z\ra x}G_{\a}(z)$ :
$$ G_{\a}(x) \sim
\frac{1}{x}\int_0^\infty e^{-t} (1-ct^\frac{\alpha}{2}
Y_x(1+o(1)))dt \approx \frac{1}{x}(1-c \int_0^\infty e^{-t}
t^\frac{\alpha}{2}dt  Y_x(1+o(1)))$$ Identifying the imaginary parts
of both sides we get:
$$\rho_{\a}(x)\sim \pi^{-1} c\Gamma(\alpha)\frac{\Im(Y_x)}{x}.$$
Which proves the last statement of Theorem \ref{main3}.

\section{Cizeau and Bouchaud's  characterization}\label{sectCB}

In \cite{CB}, the authors propose the following argument; they look
at $G_N(z)_{00}$ for $z$ {\it on the real line}. By arguments similar
to those we used (but with no a priori bounds on the $G_N(z)_{kk}$)
they argue that $G_N(z)_{00}$ converges in law as $N$ goes to
infinity. The limit law, that we will denote $P_G$ to follow their
notations (but which is $\mu^z$ in ours) is then given by the 
implicit 
equation (11) in \cite{CB}
$$\int f(y) dP_G(y)=\int f(\frac{1}{z-y}) dP_S(y)
=\int \frac{1}{y^2} f(y) dP_S(z-\frac{1}{y}).$$
$P_S=L_{\alpha}^{C(z),\beta(z)}$ is now a {\it real-valued } stable law
with parameters $C(z)$ and $\beta(z)$
given self-consistently (see (12a) and (12b) in \cite{CB}) by
\begin{eqnarray*}
C(z)&=&\int |y|^{\frac{\alpha}{2}}dP_G(y)=\int |y|^{\frac{\alpha}{2}-2}
dP_S(x-\frac{1}{y})
\\
\beta(z)&=&\int |y|^{\frac{\alpha}{2}}\mbox{sign}(y)dP_G(y)
\\
\end{eqnarray*}
where there was a typographical error  in the definition
of $\beta$ in \cite{CB}. 12b which was already noticed in
\cite{burda}. We in fact have that for
any real $t$,
\begin{eqnarray}
\int e^{-it y} dP_S(y)&=&e^{- C_\alpha^{-1}t^{\frac{\alpha}{2}} (C(z)-
i\tan(\frac{\pi\alpha}{4})\beta(z))}
\nonumber\\
&=&e^{-\Gamma(\alpha-1)(it)^{\frac{\alpha}{2}}
\int (x)^{\frac{\alpha}{2}}d P_G(x)}\label{ftS}\\
\nonumber
\end{eqnarray}
where we used that $K_z:=\int (x)^{\frac{\alpha}{2}}d P_G(x)
=e^{-\frac{i\pi\alpha}{4}}[\cos(\frac{\pi\alpha}{4}) C(z)-i 
\sin(\frac{\pi\alpha}{4}) \beta(z)]$. 
So, we see that the description of the limit law
is very similar to ours, except that $z$ is
supposed to belong to $\R$. 
Let us assume (as seems to be the case 
in \cite{CB}) that 
$C(z)$ and $\beta(z)$
are finite.  Then, also  $K_z$
is finite and 
we see that for non negative real  $z$'s
\begin{eqnarray}
K_z&=& \int (z-y)^{-\frac{\alpha}{2}} d P_S(y)\nonumber\\
&=& -C(\alpha)\int_0^\infty t^{\frac{\alpha}{2}-1}e^{-t z}
e^{-\Gamma(\alpha-1)(it)^{\frac{\alpha}{2}} K_z} dt. \label{cfo}\\
\nonumber
\end{eqnarray}
Hence,  $K_z$ and the
$X_z$ introduced in section
\ref{studylimit}  satisfy formally  the same equation,
except that $X_z$ satisfies it for $z\in\C^+$
and $K_z$ for real $z$'s. Moreover, we have seen 
that $X_z$
can be extended continuously
to $z$ real in  $ (K_\alpha')^c$ and then this extension $X_z$ satisfies
the same equation that $K_z$.
This indicates that we expect $K_z$ and $X_z$ to
be equal, at least on $(K_\alpha')^c$.
In fact, $X_z$ is the unique solution of this equation with an
analytic extension to $\C^+$  and  going to zero at
infinity. In \cite{CB}, under (12a-12b), it is claimed
that the equations defining $C(z), \beta(z)$ have a unique solution,
and so that $K_z$ is also determined uniquely 
by \eqref{cfo}. 
We could not prove
the uniqueness of the solutions to this equation
on the real line. In any case, 
if we beleive either that  $K_z$ extends analytically on
$\C^+$ and goes to zero at infinity
or that the above equation has a unique solution
for $z\in\R$, we must have $X_z=K_z$ at least
for $z\in (K_\alpha')^c$.

The second claim of \cite{CB} is that the density of the limiting
spectral measure $\rho(z)dz=d\mu(z)$ is given, see \cite{CB} (14),
by
$$\rho(z)=\frac{dP_S}{dz}(z).$$
Note that by Fourier inversion, if  $K_z=X_z$,
for $z>0$, since $P_S$ is a probability
measure on $\R$ with Fourier transform given by \eqref{ftS},
\begin{eqnarray*}
\frac{dP_S}{dz}(z)&=&\frac{1}{2\pi} \int_\R e^{-it z}
e^{-\Gamma(\alpha-1)(it)^{\frac{\alpha}{2}} X_z} dt\\
& =&\frac{1}{\pi} \Im\left( \int_0^\infty e^{-itz }
e^{-\Gamma(\alpha-1)(it)^{\frac{\alpha}{2}} X_z} dt\right)
\\
&=&- \frac{1}{\pi}\Im\left(\frac{1}{z}  \int_0^\infty e^{-t }
e^{-\gamma(\alpha-1)(t)^{\frac{\alpha}{2}} Y_z} dt\right)\\
\end{eqnarray*}
and therefore we miraculously recover our result
\eqref{densite}.  Hence, at least for $z\in (K_\alpha')^c$,
the prediction of \cite{CB} coincides 
with our result if we beleive that \eqref{cfo} has a unique solution.

\section{The moment method. Proof of Theorem \ref{main4}}\label{zaka}

We prove here Theorem \ref{main4} using the moment method developed
by I. Zakharevich \cite{zakh}. For any $B>0$, we consider the matrix
$X_N^B$ with truncated entries $x_{ij}^B= x_{ij}1_{|x_{ij}|\le
Ba_N}$ and the normalized matrix $A_N^B=a_N^{-1} X_N^B$. Recall that
work here under the additional hypothesis (\ref{skewness}):
$$ \lim_{u \ra \infty} \frac{\P(x(ij)>u)}
{\P(|x(ij)|>u)}= \theta \in [0,1]$$

We begin by the following estimate on moments of the entries of
$A_N^B$.

\begin{lem}
For any integer $m \ge 1$, the following limit exists
$$C_{m}=\lim_{N \ra \infty}
\frac{\E[A_N^B(ij)^{m}]}{N^{\frac{m}{2}-1}\E[A_N^B(ij)^{2}]^{\frac{m}{2}}}
$$
Moreover, if $m=2k$ is even
$$C_{m}=\frac{2-\alpha}{m-\alpha}
(\frac{2-\alpha}{\alpha}B^{\alpha})^{\frac{m}{2}-1}$$

If $m=2k-1$ is odd
$$C_{m}=(2\theta-1)\frac{2-\alpha}{m-\alpha}
(\frac{2-\alpha}{\alpha}B^{\alpha})^{\frac{m}{2}-1}$$

\end{lem}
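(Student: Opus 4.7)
The plan is to derive the asymptotics of $\E[A_N^B(ij)^m]$ directly from Feller's truncated moment theorem already invoked earlier as \eqref{truncatedmoments}, applied separately to the absolute value (for even $m$) and to the positive/negative parts of $x_{ij}$ (for odd $m$, where the skewness hypothesis \eqref{skewness} enters).

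First I would handle the denominator in a uniform way. Taking $\zeta=2$ in \eqref{truncatedmoments3} gives
$$\E[A_N^B(ij)^2]\sim\frac{\alpha}{2-\alpha}B^{2-\alpha}\,\frac{1}{N},$$
so
$$N^{\frac{m}{2}-1}\E[A_N^B(ij)^2]^{\frac{m}{2}}\sim\Bigl(\frac{\alpha}{2-\alpha}\Bigr)^{\frac{m}{2}}B^{\frac{(2-\alpha)m}{2}}\,\frac{1}{N}.$$
Thus in both cases the numerator we need is also of order $N^{-1}$, and $C_m$ will be the ratio of the two explicit prefactors.

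For even $m=2k\ge 2$, we have $A_N^B(ij)^m=|A_N^B(ij)|^m$, so \eqref{truncatedmoments3} with $\zeta=m$ gives
$$\E[A_N^B(ij)^m]\sim\frac{\alpha}{m-\alpha}B^{m-\alpha}\,\frac{1}{N}.$$
Taking the ratio and simplifying the power of $B$ (the exponent reduces to $\alpha(\frac{m}{2}-1)$) yields the stated formula
$$C_m=\frac{(2-\alpha)^{m/2}}{(m-\alpha)\alpha^{m/2-1}}B^{\alpha(m/2-1)}=\frac{2-\alpha}{m-\alpha}\Bigl(\frac{2-\alpha}{\alpha}B^\alpha\Bigr)^{m/2-1}.$$

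For odd $m$, I would write $A_N^B(ij)^m=(A_N^B(ij))_+^m-(A_N^B(ij))_-^m$, so the only non-routine input is to apply Feller's truncated moment theorem to the one-sided random variables $(x_{ij})_\pm$ separately. The skewness hypothesis \eqref{skewness} yields $\P(x_{ij}>u)\sim \theta\,L(u)u^{-\alpha}$ and $\P(-x_{ij}>u)\sim(1-\theta)L(u)u^{-\alpha}$, so each one-sided variable lies in the domain of attraction of an $\alpha$-stable law with slowly varying function $\theta L$, resp.\ $(1-\theta)L$. Feller's theorem then gives
$$\E[(x_{ij})_\pm^m\mathbf 1_{(x_{ij})_\pm\le Ba_N}]\sim \frac{\alpha}{m-\alpha}\{\theta\text{ or }1-\theta\}L(Ba_N)(Ba_N)^{m-\alpha},$$
and subtracting produces the characteristic factor $2\theta-1$:
$$\E[x_{ij}^m\mathbf 1_{|x_{ij}|\le Ba_N}]\sim(2\theta-1)\frac{\alpha}{m-\alpha}L(Ba_N)(Ba_N)^{m-\alpha}.$$
Dividing by $a_N^m$ and using \eqref{estdebase} to replace $L(Ba_N)a_N^{-\alpha}$ by $N^{-1}$ yields
$$\E[A_N^B(ij)^m]\sim(2\theta-1)\frac{\alpha}{m-\alpha}B^{m-\alpha}\,\frac{1}{N}.$$
The ratio computation is then identical to the even case up to the extra factor $2\theta-1$, giving the stated $C_m$.

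The only mild obstacle is the split-and-recombine step for odd $m$: I need to verify that Feller's theorem applies to the positive and negative parts with slowly varying functions $\theta L$ and $(1-\theta)L$, which is a direct consequence of \eqref{skewness} and the definition of the domain of attraction. The boundary cases ($\theta\in\{0,1\}$ and small $m$ with $m<\alpha$) are handled by reading $\theta=0$ or $1-\theta=0$ as a one-sided statement, and by the same splitting since we only need the asymptotic equivalence, not a positivity assertion on $m-\alpha$.
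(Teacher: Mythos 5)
Your proof is correct and follows essentially the same route as the paper's, which simply invokes the truncated-moment estimate \eqref{truncatedmoments2} (Theorem VIII.9.2 of Feller) with $\zeta=2$ and $\zeta=m$ for even $m$, and applies it to $x(ij)^+$ and $x(ij)^-$ together with the skewness hypothesis \eqref{skewness} for odd $m$; you merely spell out the ratio computation that the paper leaves implicit. The only shaky point is your closing parenthetical about $m<\alpha$ (i.e.\ $m=1$ with $\alpha>1$), where the Feller estimate does not literally apply because the truncated first moment then converges to a finite constant rather than obeying the $\frac{\alpha}{m-\alpha}$ asymptotics, but the paper's own proof is equally silent on that boundary case.
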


\begin{proof}
It is a simple application of the classical result about truncated
moments (Theorem VIII.9.2 of \cite{Feller}) already used in Section
\ref{tight}, \eqref{truncatedmoments2} : For any $ \zeta \ge \alpha$

$$ \E[|x(ij)|^{\zeta}1_{|x(ij)|<Ba_N}] \sim
\frac{\alpha}{\zeta-\alpha}B^{\zeta-\alpha}\frac{a_N^{\zeta}}{N}
$$

The first item of the Lemma is a direct consequence of this estimate
for $\zeta=2$ and $\zeta=2k$. The second is also a consequence of
this estimate, used for $x(ij)^+$ and $x(ij)^-$, and of the
additional skewness hypothesis (\ref{skewness}).

\end{proof}

This lemma enables us to get the main result of this section, i.e
the convergence of the moments of the spectral measure of the matrix
$A_N^B$. We will need some more notations that we take verbatim from
Zakharevich. For any integer $k \ge 1$, we define $V_k$ as the set
of all $(e_1,...,e_l)$such that $\sum_{i=1}^{l}e_i=k$ and $e_1\ge
e_2\le ... \ge e_l >0$. For any $(e_1,...,e_l) \in V_k$ define
$T(e_1,...,e_l)$ as the number of colored rooted trees with $k+1$
vertices and $l+1$ distinct colors, say $(c_1,...c_l)$ satisfying
the following conditions:
\begin{enumerate}
\item
There are exactly $e_i$ nodes of color $c_i$. The root node is the
only node colored $c_0$
\item
If nodes a and b are the same color then the distance from a to the
root is the same as the distance from b to the root
\item
If nodes a and b have the same color then their parents also have
the same color
\end{enumerate}

With these notations we have the following convergence result,
directly implied by Zakharevich's results.

\begin{lem}
\begin{enumerate}
\item
For every integer $k \ge 1$, the following limit exists
\begin{equation}\label{approxmom}
\lim_{N\ra\infty} \E[\int x^k d\mun_{A^B_N}(x)]=:m_k^B
\end{equation}
\item
$m_k^B=0$ if k is odd, and $m_{2k}^B=\sum_{(e_1,...,e_l)\in V_k}
T(e_1,...,e_l)\prod_{i=1}^{l}C_{2e_i} $.
\item
There exists a probability measure $\mu^B_{\alpha}$ uniquely
determined by its moments $m_k^B$. $\mu^B_{\alpha}$ is independent
of the skewness parameter $\theta$.
\item
$\mu^B_{\alpha}$ has unbounded support and is symmetric.
\item
The mean spectral measure $\E[\mun_{A^B_N}]$ converges weakly to
$\mu^B_{\alpha}$.
\end{enumerate}
\end{lem}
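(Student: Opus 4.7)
The plan is to apply the moment method to $A_N^B$, reducing the lemma to I.~Zakharevich's general combinatorial theorem from \cite{zakh}.

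First, I would expand
\[
\frac{1}{N} E[\tr((A_N^B)^k)] = \frac{1}{N} \sum_{i_0,\ldots,i_{k-1}} E\left[\prod_{s=0}^{k-1} A_N^B(i_s,i_{s+1})\right]
\]
(indices mod $k$) and group the tuples $(i_0,\ldots,i_{k-1})$ by their combinatorial shape: the sequence of first appearances of vertices together with the repetition pattern of the steps. A shape is summarized by a closed walk on an abstract rooted multigraph with $v$ distinct vertices and $l$ distinct edges of respective multiplicities $e_1\ge\cdots\ge e_l\ge 1$ with $\sum e_i = k$. For any fixed shape, the number of realizations is $\sim N^v$ and, by independence, the expected product equals $\prod_{i=1}^{l} E[(A_N^B(1,2))^{e_i}]$.

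Using the truncated-moment estimate from the previous lemma, $E[(A_N^B(ij))^{m}] \sim C_m\, N^{m/2-1}\, E[(A_N^B(ij))^2]^{m/2}$ together with $E[(A_N^B(ij))^2]\sim \frac{\alpha}{2-\alpha}B^{2-\alpha}/N$, the contribution of a fixed shape to $N^{-1}E[\tr((A_N^B)^k)]$ is asymptotic to
\[
N^{v-l-1}\,\prod_{i=1}^l C_{e_i}\left(\frac{\alpha}{2-\alpha}B^{2-\alpha}\right)^{e_i/2}.
\]
Since any connected graph on $v$ vertices has $\ge v-1$ edges, this contribution survives in the limit only when $v = l+1$, i.e.\ when the underlying graph is a tree. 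Any closed walk on a tree traverses every edge an even number of times, so each $e_i$ is even. In particular $k=\sum e_i$ must be even, which proves $m_k^B=0$ for odd $k$, and the odd moments $C_{2m-1}$ (the only quantities in which the skewness parameter $\theta$ appears) never enter the sum, giving $\theta$-independence. Summing the surviving tree contributions is precisely Zakharevich's combinatorial enumeration: the coloring conditions in the definition of $T(e_1,\ldots,e_l)$ encode that vertex visits corresponding to a common edge-multiplicity class must sit at a common depth and share a common parent, which is exactly the structure imposed by a closed walk on a tree with the prescribed multiplicities. This yields parts 1 and 2 of the lemma.

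For existence and uniqueness of $\mu_\alpha^B$, I would verify Carleman's condition: combining the explicit bound $C_{2e_i}\le c_{\alpha}\,(c'_{\alpha}B^\alpha)^{e_i-1}$ with a crude bound on $T(e_1,\ldots,e_l)$ (at most the total number of rooted trees on $k+1$ vertices) leads to an upper bound on $m_{2k}^B$ sufficient for Carleman. Symmetry is immediate from the vanishing odd moments. Unbounded support is obtained by exhibiting inside the sum defining $m_{2k}^B$ a family of contributions that grows faster than $R^{2k}$ for every $R$; specifically, combining the $B^\alpha$-growth in $C_{2e_i}$ for partitions with large parts with the combinatorial growth of $T(e_1,\ldots,e_l)$ for partitions with many small parts produces this growth. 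Finally, part 5 follows from Lemma \ref{tight}: every weakly convergent subsequence of $E[\hat\mu_{A_N^B}]$ has moments equal to $m_k^B$ by \eqref{approxmom} (uniform integrability of $x^k$ following from a uniform bound on $E[N^{-1}\tr((A_N^B)^{k+2})]$ obtained by the same shape-counting argument), and since $\mu_\alpha^B$ is the unique probability measure with these moments, the full sequence converges weakly to $\mu_\alpha^B$.

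The main obstacle is the combinatorial bijection between closed tree-walks of length $k$ with prescribed edge multiplicities and Zakharevich's colored rooted trees with count $T(e_1,\ldots,e_l)$. The coloring constraints are carefully designed to match the symmetry of equivalent edges inside the walk, and verifying this bijection rigorously is the technical heart of \cite{zakh}; my plan is to invoke it rather than re-derive it.
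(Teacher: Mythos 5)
Your proposal is correct and follows essentially the same route as the paper: the moment method for $A_N^B$ reduced to Zakharevich's combinatorial results, which you invoke for the tree/coloring bijection just as the paper cites his Theorem 2 and Corollary 6 (together with $\lim_N N\E[A_N^B(ij)^2]=\frac{\alpha}{2-\alpha}B^{2-\alpha}$). The only difference is that you spell out the trace expansion, a Carleman bound, and the moment-convergence argument where the paper simply cites Zakharevich's Propositions 9, 10, 12 and Theorem 1 for determinacy, symmetry, unbounded support and weak convergence.
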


\begin{proof}
In order to prove the first and second items, it is enough to use
the preceding Lemma, Corollary 6 and Theorem 2 in \cite{zakh}, plus
the fact that
$$\lim_{N \ra \infty}N \E[ A_N^B(ij)^{2}]=
\frac{\alpha}{2-\alpha}B^{2-\alpha}$$. The third item is a
consequence of the estimate
$$C(m)\le \ C \rho^m$$
with $\rho=(\frac{2-\alpha}{\alpha}B^{\alpha})^{\frac{1}{2}}$ and of
Proposition 10 in \cite{zakh}. The fact that $\mu^B_{\alpha}$ is
independent of the skewness parameter $\theta$ is obvious since its
moments only depend on the $C_m$ for even m's, which are insensitive
to the parameter $\theta$. The fourth item is a consequence of
Proposition 9 and Proposition 12 of \cite{zakh}. The fifth one is a
consequence of Theorem 1 of \cite{zakh}.
\end{proof}

This lemma proves the first part of Theorem \ref{main4}. In order to
prove the second part we simply remark that we have already done so,
since we have seen, in the proof of Lemma \ref{tight}, that
$\mu^B_{\alpha}$ converges and that its limit is the weak limit of
$\E[\mun_{A_N}]$.

\section{Appendix: Convergence to stable distributions for triangular
arrays}\label{sec-conc}

We begin here by recalling the notations for stable distributions,
see for instance \cite{Taqqu}. A real random variable Y has a stable
distribution with exponent $\alpha \in (0,2)$, $\alpha \neq 1$,
scale parameter $\sigma>0$, skewness parameter $\beta \in [-1,1]$,
and shift parameter $\mu \in \R$ (in short
$Stable_{\alpha}(\sigma,\beta,\mu)$ ) iff its characteristic
function is given by:
$$
\E[\exp(itY)]= \exp{[-\sigma^{\alpha}|t|^{\alpha}(1 - i\beta sign(t)
\tan(\frac{\pi\alpha}{2})) + i\mu t]} 
$$
We will consider here only the case where $\alpha < 1$.

A complex random variable Y has an $\alpha$-stable distribution with
spectral representation $(\Gamma, \mu)$ if $\Gamma$ is a finite
measure on the unit circle $S^1$, and $\mu$ is a complex number such
that the characteristic function of Y is given by:
$$
\E[\exp(i \langle t,Y\rangle )]= \exp{[-\int_{S^1}| \langle
t,s\rangle |^{\alpha}(1 - i sign(\langle t,s\rangle)
\tan(\frac{\pi\alpha}{2}))\Gamma(ds) + i \langle \mu, t\rangle]}
$$
We will need the constant
$$
C_{\alpha}^{-1}= \int_0^{\infty} \frac{\sin x}{x^{\alpha}}dx=
\frac{\Gamma(2-\alpha)\cos(\frac{\pi\alpha}{2})}{1-\alpha}
$$
 Throughout this section,
 we consider  a sequence of i.i.d non
negative random variables $(X_k)_{k\geq 1}$ and assume that their
common distribution is in the domain of attraction of an
$\alpha$-stable distribution, with $\alpha\in (0,1)$, i.e that the
tail is regularly varying:
$$
P[ X \geq u] = \frac{L(u)}{u^{\alpha}}
$$
We introduce the normalizing constant $\widetilde a_N$ by:
\begin{equation}
\widetilde a_N= \inf(u, P[ X \geq u]\le \frac{1}{N})
  \label{normalisation22}
  \end{equation}
We consider a triangular array of real or complex numbers
  $(G_{N,k}, 1\le k\le N)$ and give sufficient conditions for the
normalized sum:
$$
  S_N= \frac{1}{\widetilde a_N} \sum_{k=1}^{N} G_{N,k}X_k
$$
  to converge in
  distribution to a (real or complex) stable distribution.
  We will always assume that the triangular array is bounded, i.e
  that
$$
M:= \sup (|G_{N,k}|, N \geq 1, 1\le k\le N) < \infty
$$
We begin with the case where the numbers $G_{N,k}$ are real.

\begin{theo}\label{real stable laws}
Assume that the triangular array of real numbers $(G_{N,k}, N \geq
1, 1\le k\le N)$ is bounded. Furthermore assume that the empirical
measure
$$
  \nu_N= \frac{1}{N} \sum_{k=1}^{N} \delta_{G_{N,k}}
$$
converges weakly to a probability measure $\nu$ on the real line.
Then the distribution of the normalized sum $S_N= \frac{1}{\widetilde a_N}
\sum_{k=1}^{N} G_{N,k}X_k$ converges to a
$Stable_{\alpha}(\sigma,\beta,0)$ distribution, with
$$
\sigma^{\alpha} = \frac{1}{C_{\alpha}}\int|x|^{\alpha}d\nu(x),
$$
$$
\beta=\frac{\int|x|^{\alpha}
sign(x)\nu(dx)}{\int|x|^{\alpha}\nu(dx)}
$$
If $\sigma^{\alpha}=0$, i.e if $\nu = \delta_0$, the above statement
should of course be understood as: $S_N= \frac{1}{\widetilde a_N}
\sum_{k=1}^{N} G_{N,k}X_k$ converges in distribution to zero.
\end{theo}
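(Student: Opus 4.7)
By independence of the $X_k$'s,
$$\log\E[e^{itS_N}]=\sum_{k=1}^N\log\E\bigl[e^{itG_{N,k}X_k/\widetilde a_N}\bigr],$$
and the plan is to compute this sum asymptotically and invoke L\'evy's continuity theorem. Since $|G_{N,k}|\le M$ and $\widetilde a_N\to\infty$, each exponent is small, so $\log\E[e^{is X}]$ can be replaced by $\E[e^{isX}]-1$ up to a quadratic error summing to $o(1)$.

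The analytic input is the classical asymptotic for the characteristic function of a non-negative variable in the domain of attraction of a one-sided $\alpha$-stable law: as $s\to 0^+$,
$$1-\E[e^{isX}]=C_\alpha^{-1}s^\alpha L(1/s)\bigl(1-i\tan(\tfrac{\pi\alpha}{2})\bigr)+o\bigl(s^\alpha L(1/s)\bigr),$$
proved by a Tauberian computation starting from $\P(X\ge u)=L(u)/u^\alpha$. Setting $s=tG_{N,k}/\widetilde a_N$, using $\mathrm{sign}(sX_k)=\mathrm{sign}(t)\mathrm{sign}(G_{N,k})$ (since $X_k\ge 0$), and combining the defining relation $NL(\widetilde a_N)/\widetilde a_N^\alpha\to 1$ with Potter's bounds for the slowly varying $L$, I obtain, uniformly in $k$,
$$N\bigl(1-\E[e^{itG_{N,k}X_k/\widetilde a_N}]\bigr)=C_\alpha^{-1}|t|^\alpha|G_{N,k}|^\alpha\bigl(1-i\,\mathrm{sign}(tG_{N,k})\tan(\tfrac{\pi\alpha}{2})\bigr)+o(1).$$

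Averaging over $k$ against $\nu_N$ and using that $x\mapsto|x|^\alpha$ and $x\mapsto|x|^\alpha\mathrm{sign}(x)$ are bounded and continuous on $[-M,M]$, the weak convergence $\nu_N\to\nu$ yields
$$\sum_{k=1}^N\bigl(1-\E[e^{itG_{N,k}X_k/\widetilde a_N}]\bigr)\longrightarrow C_\alpha^{-1}|t|^\alpha\int\bigl(|x|^\alpha-i\,\mathrm{sign}(tx)|x|^\alpha\tan(\tfrac{\pi\alpha}{2})\bigr)d\nu(x).$$
The right side rewrites, by the definitions of $\sigma^\alpha$ and $\beta$, as $\sigma^\alpha|t|^\alpha\bigl(1-i\beta\,\mathrm{sign}(t)\tan(\tfrac{\pi\alpha}{2})\bigr)$, so that $\log\E[e^{itS_N}]\to-\sigma^\alpha|t|^\alpha\bigl(1-i\beta\,\mathrm{sign}(t)\tan(\tfrac{\pi\alpha}{2})\bigr)$, the log-characteristic function of $\mathrm{Stable}_\alpha(\sigma,\beta,0)$. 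The degenerate case $\nu=\delta_0$ gives limit $0$, i.e.\ convergence of $S_N$ to the constant $0$ in distribution.

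The main obstacle is uniformity: the pointwise asymptotic for $1-\E[e^{isX}]$ must be valid along the entire triangular array, in particular for $G_{N,k}$ close to zero where $s=tG_{N,k}/\widetilde a_N$ is much smaller than $1/\widetilde a_N$. A truncation handles this: for $\delta>0$, split the sum according to $|G_{N,k}|\le\delta$ or $|G_{N,k}|>\delta$; on the first piece use the crude bound $|1-\E[e^{isX}]|\le\mathrm{const}\cdot|s|^\alpha L(1/|s|)$, whose total contribution is at most $\mathrm{const}\cdot\int_{|x|\le\delta}|x|^\alpha d\nu_N(x)$ and vanishes as $\delta\to 0$ by weak convergence of $\nu_N$; on the second piece, Potter's bounds make the asymptotic uniform in $k$. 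Letting $\delta\to 0$ after $N\to\infty$ closes the argument.
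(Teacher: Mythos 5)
Your argument is correct, but it takes a genuinely different route from the paper. The paper first treats the restricted case where the $G_{N,k}$ are positive and bounded below, and there it does not touch characteristic functions at all: it verifies the hypotheses (UAN, truncated-variance control via Karamata, and the max/min tail conditions) of a general limit theorem for triangular arrays of non-negative independent variables cited from Galambos, with the uniformity supplied by a lemma on uniform convergence of slowly varying functions. The general case is then obtained by splitting $S_N$ into three independent pieces according to $G_{N,k}>\epsilon$, $G_{N,k}<-\epsilon$, $|G_{N,k}|\le\epsilon$, applying the restricted case to the first two, showing the third is negligible in probability, and passing to the limit along a diagonal sequence $\epsilon_N\to0$ (choosing $\pm\epsilon$ off the atoms of $\nu$). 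You instead compute $\log\E[e^{itS_N}]$ directly from the classical expansion $1-\E[e^{isX}]\sim C_\alpha^{-1}|s|^\alpha L(1/|s|)\bigl(1-i\,\mathrm{sign}(s)\tan(\tfrac{\pi\alpha}{2})\bigr)$, which is the same constant the paper only meets later (in its Fourier--Laplace lemma for stable laws); your identification of the limit exponent as $\sigma^\alpha|t|^\alpha(1-i\beta\,\mathrm{sign}(t)\tan(\tfrac{\pi\alpha}{2}))$ is exact, including the sign bookkeeping through $\mathrm{sign}(tG_{N,k})$. What your route buys is a one-pass treatment of positive and negative coefficients and of the degenerate case, with the small-coefficient region absorbed by a $\delta$-truncation inside the exponent rather than a separate probabilistic negligibility argument; what the paper's route buys is that all delicate local analysis of characteristic functions is outsourced to a textbook theorem, at the price of the three-way decomposition and the $\epsilon_N$ diagonal step.

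Two small points you should tighten, though neither is a gap: after applying Potter's bounds on the set $|G_{N,k}|\le\delta$, the crude bound is of order $\frac{1}{N}\sum_{|G_{N,k}|\le\delta}|G_{N,k}|^{\alpha-\epsilon}\le\delta^{\alpha-\epsilon}$ rather than literally $\int_{|x|\le\delta}|x|^\alpha d\nu_N$, which still vanishes as $\delta\to0$; and when you pass to the limit of $\frac1N\sum_{|G_{N,k}|>\delta}$ via weak convergence you should take $\delta$ with $\nu(\{\pm\delta\})=0$, exactly as the paper does for its $\epsilon$.
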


\begin{proof}[Proof of theorem \ref{real stable laws}]
We begin with the particular case where the numbers $G_{N,k}$ are
positive and bonded below. We assume that there exists an $\delta
 > 0$ such that for any $N \geq 1$ and $1\le k\le N$
\begin{equation}
\delta \leq G_{N,k} \leq M .\label{boundedness}
\end{equation}
In this context we will be able to apply directly classical theorems
to the array of non negative
 independent random variables
$$
U_{N,k}=\frac{1}{\widetilde  a_N} G_{N,k}X_k
$$
For instance, we could apply the theorem in section XVII.7 of
\cite{Feller}. We rather choose to apply Theorem 8, chapter 5 of
\cite{Galambos}. According to this last result, Theorem \ref{real
stable laws} will be proved in this restricted case if we can check
the following three conditions. First the Uniform Asymptotic
Negligibility (UAN) condition, for every $\epsilon>0$
\begin{equation}
\lim_{N\ra\infty} \max_{1\le k\le N} \P(U_{N,k}>\epsilon) = 0.
\label{UAN}
\end{equation}
Second we must check that:
\begin{equation}
\lim_{\epsilon \ra 0}\lim_{N\ra\infty} \sum_{1\le k\le
N}Var[U_{N,k}1_{(U_{N,k}< \epsilon)}]  = 0, \label{variance control}
\end{equation}
and finally we must check that, for $x>0$
\begin{equation}
\lim_{N\ra\infty} \P( \max_{1\le k\le N}U_{N,k} \le x)  = \exp( -
\frac{C_{\alpha}\sigma^{\alpha}}{x^{\alpha}}), \label{right tail
control}
\end{equation} and that
\begin{equation}
\lim_{N \ra\infty} \P( \min_{1\le k\le N}U_{N,k} \le x)  = 1.
\label{left tail control}
\end{equation}
We first note that
$$
\P(U_{N,k}>\epsilon) = \P (X_k > \epsilon \frac{\widetilde a_N}{G_{N,k}}) \le
\frac{L( \frac{\epsilon \widetilde a_N}{G_{N,k}})}{(\frac{\epsilon
\widetilde a_N}{G_{N,k}})^{\alpha}}
$$
which shows that \eqref{UAN} is thus a direct consequence of our
assumption (\ref{boundedness}) and of the following lemma.
\begin{lem}\label{uniform slow variation}
Let L be a slowly varying function and define $\widetilde a_N$ as in
(\ref{normalisation22}):
\begin{equation}
\widetilde a_N= \inf(u, P[ |X| \geq u]\le \frac{1}{N})
\end{equation}
Then , for any $0<a<b$ and any $a<y<b$
\begin{equation}
\frac{L(y\widetilde  a_N)}{(y\widetilde a_N)^{\alpha}}
=\frac{1}{N}\frac{1}{y^{\alpha}}(1+\epsilon(x,N))
\end{equation}
with
\begin{equation}
\lim_{N \ra \infty}\sup_{a<y<b}\epsilon(x,N)= 0
\end{equation}
\end{lem}

\begin{proof}[Proof of Lemma \ref{uniform slow variation}]
Writing
\begin{equation}
\frac{L(y\widetilde a_N)}{(y\widetilde  a_N)^{\alpha}}=
\frac{L(y\widetilde a_N)}{L(\widetilde  a_N)}\frac{NL(\widetilde a_N)}{
\widetilde a_N^{\alpha}}\frac{1}{Ny^{\alpha}}
\end{equation}
this lemma is clearly a direct consequence of the classical fact:
\begin{equation}
\lim_{N \ra \infty}\frac{NL(\widetilde a_N)}{(\widetilde a_N)^{\alpha}}=1
\end{equation}
and of the uniform convergence theorem for slowly varying functions
(\cite{Bingham}, Theorem 1.2.1), which asserts that the convergence
\begin{equation}
\lim_{t \ra \infty}\frac{L(tx)}{L(t)}=1
\end{equation}
is uniform for x's in a compact subset of $(0,\infty)$.
\end{proof}

Next, in order to control the variance $Var[U_{N,k}1_{(U_{N,k}<
\epsilon)}]$ and prove the validity of (\ref{variance control}), we
must use Karamata's theorem, or more directly Theorem VIII.9.2 of
\cite{Feller}which shows that
\begin{equation}
\lim_{t\ra\infty}\frac{t^{\zeta-\alpha}L(t)}{\E[X^{\zeta}1_{X<t}]}=
\frac{\zeta-\alpha}{\alpha}.
\end{equation}
Using this for $\zeta=1,2$,  we see that
\begin{equation}
Var[U_{N,k}1_{(U_{N,k}< \epsilon)}] \backsim
\frac{\alpha}{2-\alpha}\epsilon^2 [\frac{L( \frac{\epsilon
\widetilde a_N}{G_{N,k}})}{(\frac{\epsilon \widetilde a_N}{G_{N,k}})^{\alpha}}].
\end{equation}
Lemma (\ref{uniform slow variation}) then shows that
$Var[U_{N,k}1_{(U_{N,k}< \epsilon)}]$ is of order
$\frac{\epsilon^2}{N}$, and thus that
\begin{equation}
\lim_{\epsilon \ra 0}\lim_{N\ra\infty} \sum_{1\le k\le
N}Var[U_{N,k}1_{(U_{N,k}< \epsilon)}]  = 0
\end{equation}
In order to complete the proof of Theorem \ref{real stable laws}
in the particular case where the numbers $G_{N,k}$ are positive and
bounded below, we now only have to check (\ref{right tail control})
since (\ref{left tail control}) is obvious, the variables $u_{N,k}$
being non negative. For $x>0$
$$
\log \P( \max_{1\le k\le N}U_{N,k} \le x)=\sum_1^N \log[1 - \frac{L(
\frac{x\widetilde a_N}{G_{N,k}})}{(\frac{x\widetilde  a_N}{G_{N,k}})^{\alpha}}].
$$
Using again Lemma(\ref{uniform slow variation}) we see that
$$
\lim_{N\ra\infty}\log \P( \max_{1\le k\le N}U_{N,k} \le x)=-
\frac{C_{\alpha}\sigma^{\alpha}}{x^{\alpha}}
$$
since
$$
\lim_{N\ra\infty}\frac{1}{N}\sum_1^N G_{N,k}^{\alpha} =
\int|x|^{\alpha}\nu(dx)=C_{\alpha}\sigma^{\alpha}.
$$
This checks the condition (\ref{right tail control}) and finishes
the proof in the particular case where the numbers $G_{N,k}$ are
positive and bounded below. Now it is easy to prove theorem
\ref{real stable laws} in full generality. It is enough to split the
sum into the three independent summands
$$
S_N= \frac{1}{\widetilde a_N} \sum_{k=1}^{N} G_{N,k}X_k= \sum_{k=1}^{N}U_{N,k}=
S_N^{+,\epsilon} - S_N^{-,\epsilon} + S_{N,\epsilon}
$$
with
\begin{eqnarray*}
S_N^{+,\epsilon}&=&\sum_{k=1}^{N}U_{N,k}1_{\epsilon<G_{N,k}}\\
S_N^{-,\epsilon}&=& - \sum_{k=1}^{N}U_{N,k}1_{G_{N,k}< -\epsilon}\\
S_{N,\epsilon}&=&\sum_{k=1}^{N}U_{N,k}1_{|G_{N,k}|\le \epsilon}\\
\end{eqnarray*}
We now know that, if $\epsilon$ and $-\epsilon$ are not atoms of
$\nu$, then $S_N^{+,\epsilon}$ (resp $S_N^{-,\epsilon}$) converges
in distribution to a $Stable_{\alpha}(\sigma_{\alpha,
\epsilon}^+,1,0)$ (resp
$Stable_{\alpha}(\sigma_{\alpha,\epsilon}^-,1,0)$) with
\begin{eqnarray*}
C_{\alpha}\sigma_{\alpha,\epsilon}^+ &=&
\int_{\epsilon}^{\infty}|x|^{\alpha}\nu(dx)\\
C_{\alpha}\sigma_{\alpha,\epsilon}^- &=
\int_{-\infty}^{-\epsilon}|x|^{\alpha}\nu(dx)\\
\end{eqnarray*}
So that the sum $S_N^{+,\epsilon}+ S_N^{-,\epsilon}$ converges in
distribution, when N tends to $\infty$ to a
$Stable_{\alpha}(\sigma_{\alpha, \epsilon},\beta_{\alpha,
\epsilon},0)$ with
\begin{eqnarray*}
C_{\alpha}\sigma_{\alpha,\epsilon}&=& \int_{|x|>\epsilon}|x|^{\alpha}\nu(dx)\\
\beta_{\alpha,\epsilon} &=&
\frac{\int_{|x|>\epsilon}|x|^{\alpha}
sign(x)\nu(dx)}{\int_{|x|>\epsilon}|x|^{\alpha}\nu(dx)}\\
\end{eqnarray*}
It is clear that, since $\lim_{\epsilon\ra
0}\sigma_{\alpha,\epsilon}=\sigma_{\alpha}$ and that
$\lim_{\epsilon\ra 0}\beta_{\alpha,\epsilon}=\beta_{\alpha}$, the
distribution
$Stable_{\alpha}(\sigma_{\alpha,\epsilon},\beta_{\alpha,\epsilon},0)$
converge to $Stable_{\alpha}(\sigma_{\alpha},\beta_{\alpha},0)$,
when $\epsilon$ tends to zero. Thus there exists a sequence
$\epsilon_N$ tending to zero such that the sum $S_N^{+,\epsilon_N}+
S_N^{-,\epsilon_N}$ converges in distribution to a
$Stable_{\alpha}(\sigma_{\alpha},\beta_{\alpha},0)$ variable.

But $S_{N,\epsilon_N}$ converges to zero in probability when $N \ra
\infty$. Indeed, for any $x>0$,
$$
\P(|S_{N,\epsilon_N}|> x) \le P( \frac{1}{\widetilde a_N}\sum_{k=1}^N 
X_k>
\frac{x}{\epsilon_N}) 
$$
so that
\begin{equation}
\lim_{N\ra\infty}\P(|S_{N,\epsilon_N}|> x) = 0
\end{equation}
These two facts show that $S_N= \frac{1}{\widetilde a_N} \sum_{k=1}^{N}
G_{N,k}X_k$ converge in distribution to a
$Stable_{\alpha}(\sigma_{\alpha},\beta_{\alpha},0)$ variable as
announced in Theorem \ref{real stable laws}.
\end{proof}

This result implies easily the following analogous result in the
complex case.

\begin{theo}\label{complex stable laws}
Assume that the triangular array of complex numbers $(G_{N,k}, N
\geq 1, 1\le k\le N)$ is bounded. Furthermore assume that the
empirical measure
$$
  \nu_N= \frac{1}{N} \sum_{k=1}^{N} \delta_{G_{N,k}}
  \label{eqno1}
$$
converges weakly to a probability measure $\nu $ on the complex plane. Then $S_N= \frac{1}{\widetilde a_N} \sum_{k=1}^{N} G_{N,k}X_k$ converges in
distribution to a complex stable distribution with spectral
representation $(\Gamma_{\nu}, 0)$ where $\Gamma_{\nu}$ is the
measure on $S^1$ obtained as the image of the measure
$\frac{1}{C_{\alpha}}|z|^{\alpha}\nu(dz)$ on the complex plane by
the map $ z \rightarrow \frac{z}{|z|}$. Again if $ \nu= \delta_0$
the above statement should be understood as: $S_N$ converges in
distribution to zero.
\end{theo}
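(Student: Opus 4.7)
The plan is to reduce the complex statement to the real case already established (Theorem \ref{real stable laws}) via the Cramér–Wold device, and then to identify the resulting characteristic function as the one prescribed by the complex stable distribution with spectral measure $(\Gamma_\nu,0)$.

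First, since $S_N$ is a complex random variable (equivalently, a random vector in $\R^2$), to establish convergence in distribution it suffices to show that for every $t\in\C\simeq\R^2$, the real random variable $\langle t, S_N\rangle$ converges in distribution to $\langle t, S\rangle$ where $S$ has the announced complex $\alpha$-stable law. Writing
$$\langle t, S_N\rangle=\frac{1}{\widetilde a_N}\sum_{k=1}^N \langle t, G_{N,k}\rangle X_k,$$
we are exactly in the setting of Theorem \ref{real stable laws} applied to the real triangular array $G_{N,k}^{(t)}:=\langle t, G_{N,k}\rangle$. This array is bounded (by $|t|\,M$), and its empirical measure is the push-forward of $\nu_N$ under the continuous map $z\mapsto\langle t,z\rangle$; since $\nu_N\Ra\nu$ weakly and the $G_{N,k}$ are uniformly bounded, the empirical measure of $(G_{N,k}^{(t)})$ converges weakly to the push-forward $\nu^{(t)}$ of $\nu$.

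Theorem \ref{real stable laws} then yields the convergence of $\langle t, S_N\rangle$ in distribution to a real stable variable $Y_t$ of law $\mathrm{Stable}_\alpha(\sigma(t),\beta(t),0)$ with
$$\sigma(t)^\alpha=\frac{1}{C_\alpha}\int_\R|u|^\alpha d\nu^{(t)}(u)
=\frac{1}{C_\alpha}\int_\C|\langle t,z\rangle|^\alpha d\nu(z),$$
and the analogous expression for $\sigma(t)^\alpha\beta(t)$. Evaluating the characteristic function of $Y_t$ at $u=1$ gives
$$\E[e^{i\langle t,S\rangle}]=\lim_N\E[e^{i\langle t,S_N\rangle}]=\exp\!\left[-\int_\C|\langle t,z\rangle|^\alpha\bigl(1-i\,\mathrm{sign}\langle t,z\rangle\,\tan\tfrac{\pi\alpha}{2}\bigr)\,\frac{d\nu(z)}{C_\alpha}\right].$$
Passing to polar coordinates $z=rs$ with $r=|z|$, $s=z/|z|\in S^1$, the integrand becomes $|\langle t,s\rangle|^\alpha r^\alpha\bigl(1-i\,\mathrm{sign}\langle t,s\rangle\tan(\pi\alpha/2)\bigr)$, and the measure $\frac{1}{C_\alpha}|z|^\alpha d\nu(z)$ pushes forward under $z\mapsto z/|z|$ to exactly $\Gamma_\nu$. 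This reproduces the prescribed characteristic function of a complex $\alpha$-stable law with spectral representation $(\Gamma_\nu,0)$, concluding the proof by the Cramér–Wold theorem.

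The only delicate points in carrying this out are book-keeping issues rather than genuine obstacles: one must check that the push-forward $\nu^{(t)}$ may be $\delta_0$ (when $\nu$ is supported on the line $\{\langle t,\cdot\rangle=0\}$), in which case Theorem \ref{real stable laws} gives convergence of $\langle t,S_N\rangle$ to $0$, consistent with $\Gamma_\nu$ being concentrated on the orthogonal circle directions; and one should justify the change to polar coordinates when $\nu$ has an atom at the origin, which only contributes $0$ to $\Gamma_\nu$ (the factor $|z|^\alpha$ vanishes there), matching the reduction prescribed in the theorem. No step requires machinery beyond the real case and a routine Fourier computation.
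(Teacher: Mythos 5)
Your proof is correct and is essentially the paper's own argument: the paper likewise applies Theorem \ref{real stable laws} to the projected array $\langle t,G_{N,k}\rangle$ for each fixed $t$, passes to the limit of the characteristic functions, and identifies the limit with the spectral representation $(\Gamma_\nu,0)$ via the definition of $\Gamma_\nu$ as the push-forward of $\frac{1}{C_\alpha}|z|^{\alpha}\nu(dz)$. Your additional remarks on the degenerate projection $\nu^{(t)}=\delta_0$ and on the atom of $\nu$ at the origin are harmless refinements of the same proof.
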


\begin{proof}
For any fixed $ t \in \C$, a direct application of Theorem \ref{real
stable laws} to the array of real numbers $(\langle
t,G_{N,k}\rangle)$  shows that $\langle t,S_N\rangle$ converges in
distribution to a $Stable_{\alpha}(\sigma(t),\beta(t),0)$ variable,
where

$$
  \sigma(t)^{\alpha}=\frac{1}{C_{\alpha}}\int|\langle
t,z\rangle|^{\alpha}d\nu(z)
$$
and
$$
  \beta(t)=\frac{\int|\langle t,z\rangle|^{\alpha} sign\langle
t,z\rangle d\nu(z)}{\int|\langle
  t,z\rangle|^{\alpha}d\nu(z)}.
$$
As a consequence, we obtain that 
$$
\lim_{N \ra \infty} \E[\exp(i \langle t,S_N\rangle
)]=\exp{[\sigma(t)^{\alpha}(1 - i
\beta(t)\tan(\frac{\pi\alpha}{2}))]}
$$
Note that, by definition of $\Gamma_{\nu}$:
$$
\sigma(t)^{\alpha}(1 - i
\beta(t)\tan(\frac{\pi\alpha}{2}))=\int_{S^1}| \langle t,s\rangle
|^{\alpha}(1 - i sign(\langle t,s\rangle)
\tan(\frac{\pi\alpha}{2}))\Gamma_{\nu}(ds)
$$
These two last facts prove that the distribution of $S_N$ converges
to a complex $\alpha$-stable distribution with spectral
representation $(\Gamma_{\nu},0)$.

\end{proof}

In Section \ref{limitingeq} we need a slight variation of Theorem
\ref{complex stable laws}. We want to extend it to the case where
the random variables $X_k$ are truncated at a high enough level.
More precisely, keeping the notations and hypothesis of Theorem
\ref{complex stable laws}, we define, for any $\delta>0$, the
truncated variables
$$
X^{\delta}_k= X_k 1_{X_k\le N^{\delta}\widetilde a_N}
$$
We then consider the normalized sum
$$
S^{\delta}_N= \frac{1}{\widetilde a_N} \sum_{k=1}^{N} G_{N,k}X^{\delta}_k
$$

\begin{theo}\label{extended complex stable laws}
Assume that the triangular array of complex numbers $(G_{N,k}, N
\geq 1, 1\le k\le N)$ is bounded. Furthermore assume that the
empirical measure
\begin{equation}
  \nu_N= \frac{1}{N} \sum_{k=1}^{N} \delta_{G_{N,k}}
  \label{eqno1}
\end{equation}
converges weakly to a probability measure $\nu $on the complex plane
. Then $S^{\delta}_N$ converges in distribution to a complex stable
distribution with spectral representation $(\Gamma_{\nu}, 0)$ where
$\Gamma_{\nu}$ is the measure on $S^1$ obtained as the image of the
measure $\frac{1}{C_{\alpha}}|z|^{\alpha}\nu(dz)$ on the complex
plane by the map $ z \rightarrow \frac{z}{|z|}$. Again if $ \nu=
\delta_0$ the above statement should be understood as: $S_N$
converges in distribution to zero.
\end{theo}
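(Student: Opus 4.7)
The plan is to deduce this from Theorem \ref{complex stable laws} by showing that the truncation has negligible effect with high probability. The key point is that the cutoff level $N^\delta \widetilde a_N$ lies well above the natural scale $\widetilde a_N$ of the maximum of $X_1,\dots,X_N$, so with probability tending to one the truncation does nothing at all and $S_N^\delta$ simply equals $S_N$.

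More precisely, I would first bound $\P(X_1 > N^\delta \widetilde a_N)$. By the tail hypothesis this equals $L(N^\delta \widetilde a_N)/(N^\delta \widetilde a_N)^\alpha$. Combining $NL(\widetilde a_N)/\widetilde a_N^\alpha \to 1$ with a Potter bound of the form $L(tx)/L(t) \le C x^\eta$ for $x \ge 1$, $t$ large, and any preassigned $\eta \in (0,\alpha)$, one obtains
\[
\P(X_1 > N^\delta \widetilde a_N) \le C N^{-1} N^{-\delta(\alpha-\eta)}
\]
for $N$ sufficiently large. A union bound then yields
\[
\P\!\left(\exists\,k\le N: X_k > N^\delta \widetilde a_N\right) \le C N^{-\delta(\alpha-\eta)} \xrightarrow[N\to\infty]{} 0.
\]

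On the complementary (high probability) event, $X_k^\delta = X_k$ for every $k$, hence $S_N^\delta = S_N$. Therefore $S_N^\delta - S_N \to 0$ in probability, and by Slutsky's theorem $S_N^\delta$ inherits the weak limit of $S_N$, which by Theorem \ref{complex stable laws} is the complex $\alpha$-stable distribution with spectral representation $(\Gamma_\nu, 0)$ described in the statement.

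The only (mild) subtlety is the uniform-in-$N$ control of the slow variation factor $L(N^\delta \widetilde a_N)/L(\widetilde a_N)$, for which Lemma \ref{uniform slow variation} does not directly apply since $N^\delta$ is not in a compact subset of $(0,\infty)$; the Potter bounds from \cite{Bingham} are invoked precisely for this purpose. Everything else is a one-line reduction to Theorem \ref{complex stable laws}.
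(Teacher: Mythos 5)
Your argument is correct, but it is not the route the paper takes: the paper simply observes that the proof of Theorem \ref{complex stable laws} goes through \emph{verbatim} for the truncated variables $X_k^{\delta}$ (i.e.\ one rechecks the UAN condition, the truncated-variance condition and the two extreme-value conditions, noting that cutting at the level $N^{\delta}\widetilde a_N$, far above the scale $\widetilde a_N$ at which all these limits are computed, changes none of them). You instead reduce the truncated statement to the untruncated one: you show $\mathbb{P}\bigl(\exists\, k\le N:\ X_k> N^{\delta}\widetilde a_N\bigr)\le C N^{-\delta(\alpha-\eta)}\to 0$ via $NL(\widetilde a_N)/\widetilde a_N^{\alpha}\to 1$ together with a Potter bound, so that $S_N^{\delta}=S_N$ with probability tending to one, and then transfer the limit by Slutsky. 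This is a valid and in fact cleaner argument: it is modular (no need to revisit the triangular-array conditions) and it correctly identifies the one genuine point, namely that Lemma \ref{uniform slow variation} does not cover the ratio $L(N^{\delta}\widetilde a_N)/L(\widetilde a_N)$ because $N^{\delta}$ leaves every compact set. Two small remarks: the Potter bound can even be avoided, since for any fixed $\lambda>1$ one has eventually $N^{\delta}\ge\lambda$, whence $N\,\mathbb{P}(X>N^{\delta}\widetilde a_N)\le N\,\mathbb{P}(X>\lambda\widetilde a_N)\to\lambda^{-\alpha}$ by regular variation, and letting $\lambda\to\infty$ gives the needed $o(1)$ without any uniformity in the slowly varying factor; and your conclusion uses only the elementary fact that if $\mathbb{P}(S_N^{\delta}\neq S_N)\to 0$ then the two sequences have the same weak limit points, so the appeal to Slutsky is sound. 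The paper's approach buys a statement whose proof literally repeats the earlier one (no extra tail estimate is made explicit); yours buys brevity and makes the negligibility of the truncation quantitative.
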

The proof of this variant is identical verbatim to the proof of
Theorem \ref{complex stable laws}, we omit it.

\bigskip

Finally we also need in Section \ref{limitingeq} an information
about the Fourier-Laplace transform of certain complex stable
distributions. Consider a probability measure $\nu$ on $\C$ and
define as above the measure $\Gamma_{\nu}$. Let us denote by
$P^{\nu}$ the complex $Stable_{\alpha}(\Gamma_{\nu},0)$
distribution.

\begin{theo}\label{FL transform of complex stable laws}
Assume that the measure $\nu$ is compactly supported in the closure
of $\C^-$. Then, for any $t>0$:
\begin{equation}\label{cocott}
\int e^{-itx} dP^{\nu}(x)= \exp (-\Gamma(1-\alpha)(it)^{\alpha}\int
x^{\alpha}d\nu(x))
\end{equation}

\end{theo}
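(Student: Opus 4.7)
The plan is to realize $P^\nu$ explicitly as the weak limit of a normalized triangular array sum and then read off its Fourier-Laplace transform from the product formula. Using Theorem \ref{complex stable laws} in reverse, I choose a bounded triangular array $(G_{N,k})_{1\le k\le N}\subset\overline{\C^-}$ whose empirical measure $\nu_N=N^{-1}\sum_k\delta_{G_{N,k}}$ converges weakly to $\nu$ (for instance, via a deterministic quantile construction adapted to $\nu$). The theorem then asserts that $S_N:=\widetilde a_N^{-1}\sum_k G_{N,k}X_k$ converges in distribution to $P^\nu$. Since each $X_k\ge 0$ and $\overline{\C^-}$ is closed under addition and under multiplication by nonnegative reals, $S_N\in\overline{\C^-}$ almost surely, so $P^\nu$ is supported in $\overline{\C^-}$. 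In particular, for $t>0$ one has $|e^{-ity}|\le 1$ on the support of every $S_N$ and of $P^\nu$, so $e^{-ity}$ is bounded continuous there and convergence in distribution transfers to $\E[e^{-itS_N}]\to\int e^{-ity}\,dP^\nu(y)$.

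The core step is a uniform Tauberian estimate: for fixed $t>0$ and $G$ varying over a compact subset of $\overline{\C^-}$, setting $s:=itG$ (which lies in the closed right half-plane $\{\Re s\ge 0\}$, and for which the paper's branch gives $(itG)^\alpha=(it)^\alpha G^\alpha$ since $\arg(it)+\arg G\in[-\tfrac{\pi}{2},\tfrac{\pi}{2}]\subset(-\pi,\pi)$), I claim
\[
\E\bigl[e^{-sX_1/\widetilde a_N}\bigr]=1-\frac{\Gamma(1-\alpha)\,s^\alpha}{N}+o(1/N)\quad\text{uniformly in }G.
\]
Integration by parts yields $1-\E[e^{-uX_1}]=u\int_0^\infty e^{-ux}\,\bar F(x)\,dx$ for $\Re u\ge 0$, with $\bar F(x)=L(x)/x^\alpha$. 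After the substitution $v=ux/\widetilde a_N$ (contour deformed to the positive real axis using the analyticity and decay of the integrand in the right half-plane), the uniform convergence theorem for slowly varying functions, used exactly as in Lemma \ref{uniform slow variation}, together with the defining property $NL(\widetilde a_N)/\widetilde a_N^\alpha\to 1$, give the displayed asymptotic with constant $\int_0^\infty e^{-v}v^{-\alpha}\,dv=\Gamma(1-\alpha)$.

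Summing the logarithms of the $N$ factors in $\E[e^{-itS_N}]=\prod_k\E[e^{-itG_{N,k}X_k/\widetilde a_N}]$ and exploiting continuity and boundedness of $z\mapsto z^\alpha$ on a common compact neighborhood of the supports (so that $\int z^\alpha\,d\nu_N(z)\to\int z^\alpha\,d\nu(z)$ by weak convergence), I obtain
\[
\log\E[e^{-itS_N}]\longrightarrow -\Gamma(1-\alpha)\,(it)^\alpha\int z^\alpha\,d\nu(z),
\]
which, combined with the first paragraph, delivers the claimed identity. The main obstacle is the uniformity in $G$ of the Tauberian expansion for complex $s$ ranging over a compact subset of the right half-plane: the classical statement only addresses real $s>0$, and the extension demands the contour argument and the uniform slow-variation control highlighted above. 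Once that uniformity is in hand the rest is essentially bookkeeping.
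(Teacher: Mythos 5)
Your overall architecture matches the paper's: realize $P^\nu$ as the distributional limit of $S_N=\widetilde a_N^{-1}\sum_k G_{N,k}X_k$ for a bounded array in $\overline{\C^-}$ with empirical measure converging to $\nu$, observe that everything is supported in $\overline{\C^-}$ so that $\E[e^{-itS_N}]\to\int e^{-itx}\,dP^{\nu}(x)$, and then compute the limit of the product $\prod_k\E[e^{-itG_{N,k}X_k/\widetilde a_N}]$. The essential difference is in how that product is evaluated. Since $P^\nu$ depends only on $\nu$ and not on which sequence $(X_k)$ in the domain of attraction is used, the paper chooses the $X_k$ to be \emph{exactly} $Stable_\alpha(\sigma,1)$ (hence nonnegative, as $\alpha<1$): each factor is then given \emph{exactly} by the classical formula $\E[e^{-\gamma X}]=\exp\bigl(-\sigma^\alpha\gamma^\alpha/\cos(\tfrac{\pi\alpha}{2})\bigr)$, analytically continued to $\Re\gamma>0$, and the only asymptotics needed are $\widetilde a_N\sim C_\alpha^{1/\alpha}N^{1/\alpha}$ and $\frac1N\sum_kG_{N,k}^\alpha\to\int x^\alpha d\nu(x)$; no Tauberian work at all. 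Your route keeps general domain-of-attraction variables and therefore must prove the uniform complex Abelian expansion $\E[e^{-sX/\widetilde a_N}]=1-\Gamma(1-\alpha)s^\alpha/N+o(1/N)$, which is true but is the whole difficulty, and you only sketch it.

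Within that sketch one step would fail as written: after the substitution $v=sx/\widetilde a_N$ the integrand contains $\bar F$ (equivalently $L$) evaluated along a complex ray, and $\bar F$ is not analytic, so you cannot legitimately ``deform the contour to the positive real axis.'' The workable bookkeeping is to keep the integral in the form $s\int_0^\infty e^{-sw}\bar F(\widetilde a_N w)\,dw$, compare $N\bar F(\widetilde a_N w)$ with $w^{-\alpha}$ via the uniform convergence theorem for slowly varying functions together with Potter-type bounds at the two ends, and rotate the contour only in the limiting integral $\int_0^\infty e^{-sw}w^{-\alpha}\,dw=\Gamma(1-\alpha)s^{\alpha-1}$, which is analytic. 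You also need separate care on the boundary $\Re s=0$ (real $G_{N,k}$, which do occur since $\nu$ may charge $\R$), where $\int_0^\infty e^{-sx}\bar F(x)\,dx$ converges only conditionally, and this is exactly where uniformity in $G$ is delicate. None of this is fatal---it amounts to the standard characteristic-function proof of convergence to stable laws---but as it stands it is a gap in your argument, and the paper's device of taking exactly stable summands is precisely what removes it.
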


\begin{proof}
This is a simple consequence of the analogous result for real
$Stable_{\alpha}(\sigma,1)$ distributions. If $X$ is a random
variable with $Stable_{\alpha}(\sigma,1)$ distribution, and if
$\gamma \in \C$ with $\Re(\gamma)>0$, then
\begin{equation}\label{FL transform for real stable laws}
\E(e^{-\gamma X})=
e^{-\frac{\sigma^{\alpha}}{\cos(\frac{\pi\alpha}{2})}\gamma^{\alpha}}
\end{equation}
This result is classical when $\gamma$ is real positive (see
Proposition 1.2.12 of \cite{Taqqu} for instance). The statement
(\ref{FL transform for real stable laws}) is obtained by an easy
analytic extension from the real case.

Consider now a sequence of i.i.d.r.v $(X_k)_{k \geq 1}$, with common
distribution $Stable_{\alpha}(\sigma,1)$. Furthermore consider a
bounded array of complex numbers $(G_{N,k}) \in \C^-$, such that the
empirical measure $ \frac{1}{N}\sum_{k=1}^{N} \delta_{G_{N,k}}$
converges to $\nu$ when $N \ra \infty$. As above define the
normalized sum $$S_N=\frac{1}{\widetilde a_N} \sum_{k=1}^{N}G_{N,k}X_k$$ Then,
if $ \gamma_{N,k} = it \frac{G_{N,k}}{a_N}$, one has obviously
$$
\E(e^{-itS_N})= \prod_{k=1}^{N}\E(\exp(-\gamma_{N,k} X_k))
$$
Noting that $\Re(\gamma_{N,k})>0$, it is then possible to use
(\ref{FL transform for real stable laws}):
$$
\E(e^{-itS_N})=
\exp(-\frac{\sigma^{\alpha}}{\cos(\frac{\pi\alpha}{2})}
\sum_{k=1}^{N}\gamma_{k,N}^{\alpha} )
$$
Using the classical tail estimate for real
$Stable_{\alpha}(\sigma,1)$ distributions, when u tends to $\infty$:
$$
P(X \geq u) \sim \frac{C_{\alpha}\sigma^{\alpha}}{u^{\alpha}}
$$
one sees that $ \widetilde a_N \sim C_{\alpha}^\frac{1}{\alpha}
N^{\frac{1}{\alpha}}$.

Thus, we get the estimate
$$
\E(e^{-itS_N})\sim
\exp(-\frac{(it)^{\alpha}}{C_{\alpha}\cos(\frac{\pi\alpha}{2})}
\frac{1}{N}\sum_{k=1}^{N}G_{N,k}^{\alpha} ).
$$

But $\frac{1}{N}\sum_{k=1}^{N}G_{N,k}^{\alpha}$ converges to $\int
x^{\alpha}d\nu(x)$. Using now the convergence theorem \ref{complex
stable laws} we see that,
$$
\int e^{-itx} dP^{\nu}(x)= \lim_{N \ra \infty} \E(e^{-itS_N})= \exp
(-\frac{1}{C_{\alpha}\cos(\frac{\pi\alpha}{2})}(it)^{\alpha}\int
x^{\alpha}d\nu(x)).
$$
Noting that
$$
C_{\alpha}\cos(\frac{\pi\alpha}{2})=
\frac{1-\alpha}{\Gamma(2-\alpha)}= \frac{1}{\Gamma(1-\alpha)}
$$
 proves Theorem \ref{FL transform of complex stable laws}.
\end{proof}

{\bf Acknowledgments.} The authors wish to thank A. Soshnikov for
pointing out this problem to them during a conference at Banff in
2004. They are very grateful to S. Belinschi for very  useful
comments.


\end{document}